\newcommand{\shrinkmargins}[1]{
  \addtolength{\textheight}{#1\topmargin}
  \addtolength{\textheight}{#1\topmargin}
  \addtolength{\textwidth}{#1\oddsidemargin}
  \addtolength{\textwidth}{#1\evensidemargin}
  \addtolength{\topmargin}{-#1\topmargin}
  \addtolength{\oddsidemargin}{-#1\oddsidemargin}
  \addtolength{\evensidemargin}{-#1\evensidemargin}
  }
\newcommand{\leg}[2]{\genfrac{(}{)}{}{}{#1}{#2}}
\newtheorem{theorem}{Theorem}
\newtheorem{lemma}[theorem]{Lemma}
\newtheorem{corollary}[theorem]{Corollary}
\theoremstyle{remark}
\newtheorem*{remark}{Remark}
\numberwithin{theorem}{section} \numberwithin{equation}{section}
\newcommand{\R}{\mathbb{R}}
\newcommand{\C}{\mathbb{C}}
\newcommand{\Z}{\mathbb{Z}}
\newcommand{\N}{\mathbb{N}}
\newcommand{\SL}{{\text {\rm SL}}}
\newcommand{\re}{\textnormal{Re}}
\def\H{\mathbb{H}}
\begin{document}
\title[Dyson's Rank, overpartitions, and weak Maass forms]{  Dyson's Rank, overpartitions, and weak Maass forms   }
\author{Kathrin Bringmann}
\address{School of Mathematics\\University of Minnesota\\ Minneapolis, MN 55455 \\U.S.A.}
\email{bringman@math.umn.edu}
\author{Jeremy Lovejoy}
\address{CNRS, LIAFA, Universit\'e Denis Diderot,
2, Place Jussieu, Case 7014, F-75251 Paris Cedex 05, FRANCE}
\email{lovejoy@liafa.jussieu.fr}
 \subjclass[2000] {11P82,
05A17 }

\date{\today}
\begin{abstract}
In a series of papers the first author and Ono connected  the
rank, a partition statistic introduced by Dyson, to weak Maass
forms, a new class of functions which are related to modular
forms. Naturally it is of wide interest to find other explicit
examples of Maass forms. Here we construct a new infinite family
of such forms, arising from overpartitions. As applications we
obtain combinatorial decompositions of Ramanujan-type congruences
for overpartitions as well as the modularity of rank differences
in certain arithmetic progressions.
\end{abstract}
\maketitle

\section{Introduction and Statement of Results}
A {\it partition} of a positive integer $n$ is any non-increasing
sequence of positive integers whose sum is  $n$. Let $p(n)$ denote
the number of partitions of $n$ (with the usual convention that
$p(0):=1$, and $p(n):=0$ for $n \not \in \N_0$).

Ramanujan proved that for every positive integer $n$, we have:
\begin{eqnarray} \label{ramanujan}
\begin{split}
p(5n+4)&\equiv 0\pmod 5,\\
p(7n+5)&\equiv 0\pmod 7,\\
p(11n+6)&\equiv 0\pmod{11}.\\
\end{split}
\end{eqnarray}
 In a celebrated paper Ono \cite{On1} treated these kinds of congruences
 systematically (also  see \cite{On3}). Combining Shimura's theory of modular
 forms of half-integral weight with results of Serre on modular forms modulo
 $\ell$ he showed that for any prime $\ell \geq 5$ there exist infinitely
 many non-nested arithmetic progressions  of the form $An+B$  such that
\begin{eqnarray*}
p(An+B) \equiv 0 \pmod \ell.
\end{eqnarray*}

In order to explain the congruences  in (\ref{ramanujan}) with
moduli $5$ and $7$ combinatorially, Dyson \cite{Dy} introduced the
rank of a partition. The \textit{rank} of a partition is defined
to be its largest part minus the number of its parts. Dyson
conjectured that the partitions of $5n+4$ (resp. $7n+5$) form $5$
(resp. $7$) groups of equal size when sorted by  their ranks
modulo $5$ (resp. $7$). This conjecture was proven in 1954 by
Atkin and Swinnerton-Dyer \cite{AS}. In \cite{BO2} and \cite{B2},
Ono and the first author showed that Dyson's rank partition
function also satisfies congruences of Ramanujan type.  One of the
main steps in their proof is  to show that generating functions
related to the rank are the ``holomorphic parts'' of ``weak Maass
forms", a notion we will explain later.  This new theory has many
applications, such as congruences \cite{BO2,B2} and asymptotics
\cite{B1} for ranks as well as modularity for rank differences
\cite{BOR}.

Naturally it is of wide interest to find other explicit examples
of weak Maass forms.  After partitions, the next place to look is
overpartitions.  Recall that an \textit{overpartition} is a
partition where the first occurrence of a summand may be overlined
(see \cite{CL}). For example, there are $14$ overpartitions of
$4$:
$$
\begin{gathered}
4, \overline{4}, 3+1, \overline{3} + 1, 3 + \overline{1},
\overline{3} + \overline{1}, 2+2, \overline{2} + 2,  \\
2+1+1, \overline{2} + 1 + 1, 2+ \overline{1} + 1, \overline{2} +
\overline{1} + 1, 1+1+1+1, \overline{1} + 1 + 1 +1.
\end{gathered}
$$
Overpartitions have arisen in many areas where ordinary partitions
play an important role, most notably in $q$-series and
combinatorics (e.g. \cite{Be-Pa1,Co-Hi1,CL.5,CL,Lo2,Pa1,Ye1}), but
also in mathematical physics (e.g. \cite{Fo-Ja-Ma1,Fo-Ja-Ma2}),
symmetric functions (e.g. \cite{Bre1,De-La-Ma3}, representation
theory (e.g. \cite{Ka-Kw1}) and algebraic number theory (e.g.
\cite{Lo1,Lo-Ma2}).  To give a few specific examples, the
combinatorial theory of overpartitions leads to natural and
straightforward bijective proofs of $q$-series identities like
Ramanujan's $_1\psi_1$ summation \cite{CL.5,Ye1}; in the theory of
symmetric functions in superspace, overpartitions play the role
that partitions play in the classical theory of symmetric
functions \cite{Fo-Ja-Ma1,Fo-Ja-Ma2}; and certain Dedekind zeta
functions associated to rings of integers of real quadratic fields
can be regarded as generating functions for weighted counts of
overpartitions \cite{Lo1,Lo-Ma2}.

Returning to Dyson's rank, this statistic applies just as well to
overpartitions.  Indeed, this rank and its generalizations have
already proven fundamental in the combinatorial theory of
overpartitions \cite{Co-Ma1,Lo,Lo-Ma1}.  The main result of the
present paper will be the construction of an infinite family of
weak Maass forms whose holomorphic parts are related to the
generating function for Dyson's rank of an overpartition.  As
applications, we discuss congruence properties of overpartitions
and the modularity of rank differences in arithmetic progressions.

For a positive integer $n$ we denote by $\overline{p}(n)$ the
number of overpartions of $n$. We have the  generating function
\cite{CL}
\begin{eqnarray} \label{overgenerating}
\overline{P}(q):= \sum_{n \geq 0}  \overline{p}(n) \, q^n
= \frac{\eta(2z)}{\eta^2(z)}=
1+ 2 q + 4q^2 +8q^3+ 14 q^4 +  \cdots .
\end{eqnarray}
Here $\eta(z):=q^{\frac{1}{24}} \prod_{n=1}^{\infty} \left( 1-q^n
\right)$ is Dedekind's eta function and we write $q:=e^{2 \pi  i
z}$. Moreover we denote by $\overline{N}(m,n)$ the number of
overpartitions of $n$ with rank $m$. It is shown in \cite{Lo} that
\begin{equation}  \label{overrankgen}
\begin{split}
\mathcal{O}(u;q)&:= 1 + \sum_{n = 1}^{\infty} \overline{N}(m,n) u^m q^n
= \sum_{n = 0}^{\infty}
\frac{(-1)_{n}   q^{\frac{1}{2}n (n+1)}}{(uq,q/u)_n}\\
&=
\frac{(-q)_{\infty}}{(q)_{\infty}}
\left(
1 + 2 \sum_{n \geq 1}
\frac{\left(  1-u\right)  \left(1-u^{-1}   \right)(-1)^n q^{n^2+n}}{\left(1- u q^n   \right)  \left(  1- u^{-1} q^n \right)}
\right).
\end{split}
\end{equation}
Here for $a,b \in \C$, $n \in \N \cup \{\infty\}$, we employ the
standard $q$-series notation:
\begin{eqnarray*}
(a)_{n}:&=& \prod_{r=0}^{n-1} \left(1-aq^r\right),\\
(a,b)_{n}:&=& \prod_{r=0}^{n-1} \left(1-aq^r\right)  \left(1-bq^r\right),\\
(a)_{\infty}:&=&\lim_{n \to \infty} (a)_n.
\end{eqnarray*}
It turns out that the function $\mathcal{O}(u;q)$ for $u$ a root
of unity $\not=1$ is the holomorphic part of a weak Maass form.

To make this precise, we recall the notion of a  weak Maass form
of half-integral weight $k\in \frac{1}{2}\Z\setminus \Z$. If
$z=x+iy$ with $x, y\in \R$, then the weight $k$ hyperbolic
Laplacian is given by
\begin{equation}\label{laplacian}
\Delta_k := -y^2\left( \frac{\partial^2}{\partial x^2} +
\frac{\partial^2}{\partial y^2}\right) + iky\left(
\frac{\partial}{\partial x}+i \frac{\partial}{\partial y}\right).
\end{equation}
If $v$ is odd, then define $\epsilon_v$ by
\begin{equation}
\epsilon_v:=\begin{cases} 1 \ \ \ \ &{\text {\rm if}}\ v\equiv
1\pmod 4,\\
i \ \ \ \ &{\text {\rm if}}\ v\equiv 3\pmod 4. \end{cases}
\end{equation}
 A {\it (harmonic) weak Maass form}   of weight $k$ and Nebentypus $\chi$ on a subgroup
$\Gamma \subset \Gamma_0(4)$ is any smooth function $f:\H\to \C$
satisfying the following:
\begin{enumerate}
\item For all $A= \left(\begin{smallmatrix}a&b\\c&d
\end{smallmatrix} \right)\in \Gamma$ and all $z\in \H$, we
have
\begin{displaymath}
f (Az)= \leg{c}{d}^{2k}\epsilon_d^{-2k}\chi(d)(cz+d)^{k}\ f(z).
\end{displaymath}
\item We  have that $\Delta_k f=0$.
\item The function $f(z)$ has
at most linear exponential growth at all the cusps of $\Gamma$.
\end{enumerate}
\smallskip

Suppose that $0<a<c$ are  integers,  and let $\zeta_c:=e^{\frac{2\pi
i}{c}}$.
Define the theta function of weight $\frac32$
\begin{equation}\label{shimuratheta}
\theta(\alpha,\beta;\tau):=
\sum_{n\equiv \alpha \pmod \beta}
n e^{\frac{2\pi i \tau n^2}{2 \beta}},
\end{equation}
and let
 \begin{eqnarray*}
\Theta_{a,c}(\tau):=
\left\{
\begin{array}{ll}
\theta \left( 4a + c, 2c; \frac{\tau}{4c}\right)&\text{if } c \text{ is odd},\\
2 \theta \left( 2a + \frac{c}{2}, c; \frac{\tau}{2c}\right)&\text{if } 2\parallel c,\\
4 \theta \left( a + \frac{c}{4}, \frac{c}{2}; \frac{\tau}{c}\right)&\text{if } 4| c.
\end{array}
\right.
\end{eqnarray*}
Using these cuspidal theta functions,  we define for $c\not=2$, the non-holomorphic  integral
\begin{equation}
J\left (\frac{a}{c};z\right)
 := \frac{ \pi  i \cdot \tan\left (\frac{\pi
a}{c}\right)}{  4 c   }
\int_{- \bar z}^{i
\infty} \frac{(- i \tau)^{-\frac{3}{2}}   \cdot \Theta_{a,b}\left (   -\frac{1}{ \tau}   \right) }
{\sqrt{-i(\tau+z)}} \ d\tau.
\end{equation}
Moreover   define $\mathcal{M}\left(\frac{a}{c};z \right)$ by
\begin{equation}
\mathcal{M}\left (\frac{a}{c};z\right )
:=
\mathcal{O} \left(\frac{a}{c}  ;q  \right)
-   J \left (\frac{a}{c};z\right ) ,
\end{equation}
where
$\mathcal{O}\left( \frac{a}{c};q \right):=\mathcal{O} \left( \zeta_c^a;q \right)$.
If  $u=-1$, we   define
\begin{eqnarray*}
  \mathcal{M}(-1;z):= \mathcal{O}(-1;z) - I(-1;z).
  \end{eqnarray*}
  with
  \begin{eqnarray*}
  I(-1;z):= \frac{\sqrt{2}}{\pi i} \int_{-\bar z}^{i \infty}
   \frac{\eta^2(\tau)}{\eta(2\tau) \cdot (-i (\tau+z))^{\frac32}}
  \, d \tau.
  \end{eqnarray*}
  The main result of this paper is the following theorem which establishes that those
  real analytic functions are Maass forms.
\begin{theorem}\label{maassform}
The following statements are true:
\begin{enumerate}
\item If $0<a<c$ with  $(a,c)=1$ and $c \not =2$, then $
\mathcal{M}\left (\frac{a}{c};z\right )   $ is a weak Maass form
of weight $\frac{1}{2}$ on $\Gamma_1(16c^2)$. If $2|c$ and $4|c$,
then it is a weak Maass form on   $\Gamma_1(4c^2)$ and
$\Gamma_1(c^2)$, respectively. \item The function
$\mathcal{M}(-1;z)$ is a weak Maass form of weight $\frac32$ on
$\Gamma_0(16)$.
\end{enumerate}
\end{theorem}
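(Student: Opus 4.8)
The plan is to realize both $\mathcal M(a/c;z)$ and $\mathcal M(-1;z)$ as specializations of (or close relatives of) Zwegers' $\mu$-function and its completion, and then to deduce the modular transformation law directly from Zwegers' results together with the product side of \eqref{overrankgen}. The first step is to rewrite the Lambert-type series
\begin{equation*}
\frac{(-q)_\infty}{(q)_\infty}\left(1+2\sum_{n\ge 1}\frac{(1-u)(1-u^{-1})(-1)^n q^{n^2+n}}{(1-uq^n)(1-u^{-1}q^n)}\right)
\end{equation*}
in terms of the Appell--Lerch sum $\mu(u,v;\tau)$ of Zwegers, by matching the bilateral series on the right with (a finite linear combination of) $\mu$'s after a suitable change of variables $v\mapsto$ (power of $q$ times a root of unity). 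Concretely, one uses the standard identity expressing $\sum_n (-1)^nq^{n^2+n}/(1-uq^n)$ through $\mu$, and then $u=\zeta_c^a$ (resp.\ $u=-1$) is a root of unity, which forces $v$ to lie in $\frac{1}{c}\Z\tau+\frac{1}{c}\Z$; this is exactly the ``rational'' situation in which Zwegers' completion $\widehat\mu$ is modular of weight $\tfrac12$.

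Next I would identify the non-holomorphic correction. Zwegers' completion adds to $\mu$ a period integral of a weight-$\tfrac32$ unary theta function; one checks that after the above substitution this period integral is precisely (a multiple of) $J(a/c;z)$, with the cuspidal theta function $\Theta_{a,c}$ built from $\theta(\alpha,\beta;\tau)$ arising as the image of the relevant unary theta under $\tau\mapsto -1/\tau$ — the three cases in the definition of $\Theta_{a,c}$ ($c$ odd, $2\|c$, $4\mid c$) come from keeping track of the conductor of the theta series attached to the arithmetic progression $n\equiv a\pmod c$. The factor $\pi i\tan(\pi a/c)/(4c)$ and the various powers of $2$ are to be pinned down by this comparison. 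For $u=-1$ the theta function degenerates to $\eta^2(\tau)/\eta(2\tau)$ (the reciprocal of the overpartition generating function up to a power of $q$), giving the special integral $I(-1;z)$ and the weight $\tfrac32$ instead of $\tfrac12$ — this case I would treat separately but in parallel, noting that $\mathcal O(-1;q)=\overline P(q)$ up to normalization by \eqref{overgenerating}.

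With $\mathcal M=\widehat\mu$-type object in hand, the transformation under $\mathrm{SL}_2(\Z)$ follows from Zwegers' two functional equations for $\widehat\mu$ (the elliptic shifts in $u,v$ and the modular $S$, $T$ transformations). The remaining work is purely bookkeeping: the product prefactor $(-q)_\infty/(q)_\infty=\eta(2z)/\eta^2(z)$ contributes its own (well-known) multiplier system, the root-of-unity shifts in $v$ contribute roots of unity that must be absorbed into the Nebentypus, and one must determine the precise congruence subgroup on which all multipliers become trivial. Tracking denominators, one sees the level is controlled by $c^2$ together with the $4$ coming from half-integral weight and the $2$ from $\eta(2z)$, which yields $\Gamma_1(16c^2)$ in general and the stated improvements $\Gamma_1(4c^2)$, $\Gamma_1(c^2)$ when $2\mid c$, $4\mid c$, and $\Gamma_0(16)$ in the $u=-1$ case. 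Finally, properties (2) and (3) in the definition are easy: $\Delta_{1/2}\mathcal M=0$ because $\widehat\mu$ is annihilated by the weight-$\tfrac12$ Laplacian (equivalently, $\mathcal M$ is a sum of a holomorphic function and a period integral of a weight-$\tfrac32$ theta, hence harmonic), and the growth at cusps is at most linear exponential since both $\mathcal O$ and the theta integral have at worst such growth. \textbf{Main obstacle:} the delicate part is the explicit matching in the first two steps — writing the overpartition rank generating function exactly as a combination of $\mu$-functions at the right arguments, and then verifying that Zwegers' canonical completion produces \emph{exactly} the integral $J(a/c;z)$ (right theta function in each of the three cases, right constant, right contour $\int_{-\bar z}^{i\infty}$), since any discrepancy there propagates into the multiplier computation and the level.
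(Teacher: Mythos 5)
Your strategy is genuinely different from the paper's: the paper never invokes Zwegers' $\mu$-function, but instead proves from scratch an explicit transformation law for $\mathcal{O}\left(\frac{a}{c};q\right)$ under all M\"obius transformations (Theorem \ref{Otransfo}) by Poisson summation and residue calculus in the style of Andrews' work on the mock theta functions, producing Mordell integrals that are then rewritten as theta integrals (Lemma \ref{thetalemma}); your route, if completed, would replace Sections \ref{SectionTrans} and \ref{SectionMaass} by a reduction to Zwegers' functional equations for $\widehat{\mu}$. For part (1) this is in principle viable, but the obstacle you flag is more serious than bookkeeping: the bilateral sum in \eqref{overrankgen} has quadratic exponent $q^{n^2+n}=q^{2\cdot n(n+1)/2}$ against a pole in $q^n$, i.e.\ it is a \emph{level-two} Appell--Lerch sum, not a specialization of $\mu(u,v;\tau)$ itself. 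Decomposing it into genuine $\mu$-functions requires either the higher-level Appell machinery or an explicit identity splitting it into $\mu$'s in $\tau$ and $2\tau$ plus theta quotients; this is precisely the step the paper's fourth remark warns cannot be ``guessed'', and without exhibiting that identity the proof of (1) is not complete.

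Part (2) contains a genuine error. You assert that $\mathcal{O}(-1;q)$ is $\overline{P}(q)$ up to normalization by \eqref{overgenerating}; it is not. Setting $u=-1$ in \eqref{overrankgen} gives $4\frac{(-q)_\infty}{(q)_\infty}\sum_{n\in\Z}(-1)^n q^{n^2+n}(1+q^n)^{-2}$, the alternating sum over ranks, and if this were a constant multiple of $\overline{P}(q)$ then $\mathcal{M}(-1;z)$ would be an ordinary modular form of weight $-\frac12$ rather than a weight-$\frac32$ Maass form. More importantly, your framework as described produces only weight-$\frac12$ objects: a single $\widehat{\mu}$ has simple poles in the elliptic variable, while the $u=-1$ specialization has the double pole $(1+q^n)^{-2}$. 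The jump to weight $\frac32$ comes precisely from differentiating in the elliptic variable --- the paper introduces the auxiliary function $\mathcal{O}_r(q)$ with only simple poles and applies $\frac{1}{2\pi i}\frac{\partial}{\partial r}\big|_{r=0}$ (Theorem \ref{transformation-1} and Corollary \ref{corollary-1}) --- and you would need the analogous statement that the elliptic derivative of $\widehat{\mu}$ transforms with weight raised by one. Your sketch neither states nor uses any such mechanism, so as written part (2) does not go through.
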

\noindent {\it Five remarks.}

\noindent 1) If $c$ is odd, we actually obtain Maass forms for the
larger group
$$
\left\{  \left.
\left(
\begin{smallmatrix}
\alpha & \beta\\
\gamma& \delta
\end{smallmatrix}
\right) \in \SL_2(\Z) \right| \alpha \equiv \delta \equiv 1 \pmod{4c},\, \gamma \equiv 0 \pmod{16c^2}
\right\}.
$$

\noindent 2)   The proof of the second part of  Theorem
\ref{maassform} is harder than the first since the generating
function has double poles.  To overcome this problem, we introduce
new functions $\mathcal{O}_r(q)$ having an additional parameter
$r$ but only simple poles such that one can obtain
$\mathcal{O}(-1;z)$ by a process of differentiation.  This
differentiation accounts for the augmentation of the weight by $1$
in this case.  It is worth mentioning that for the case of the
classical Dyson's rank generating functions in \cite{BO2}, the
weak Maass forms have weight $1/2$ for every root of unity $\neq
1$.

\noindent 3)
  The authors \cite{BL} show that in the context of overpartition
  pairs, the analogous generating functions associated to the
  appropriate generalization of Dyson's rank are not weak Maass forms, but
  classical modular forms.

\noindent 4) We should stress that the analysis of the
transformation behavior of $\mathcal{O}(u;q)$ is much more
involved than in the case of the Dyson's rank generating functions
in \cite{BO2}. One of the reasons is that the half-integer weight
modular form $\frac{(-q)_{\infty}}{(q)_{\infty}}$ that shows up in
(\ref{overrankgen}) is not mapped to itself as in the case of the
usual ranks.  This prohibits ``guessing'' images under M\"obius
transformations as in \cite{BO2}. There the first author and Ono
started with part of images of the generating function that they
were able to guess.  Thus the idea of proof in \cite{BO2} which
builds on old results of Watson, cannot be employed here. Instead
we have to determine explicitly the images under all M\"obius
transformations with different techniques.

In view of Theorem \ref{maassform} one can obtain results on
overpartitions by arguing as in work of Ono and the first author
\cite{B1,B2,BO1,BO2,BOR}. In this direction we exhibit congruences
for $\overline{N}(r,t;n)$, the number of overpartitions of $n$
whose rank is congruent to $r \pmod t$, and provide a theoretical
framework for proving identities for rank differences in
arithmetic progressions.  Other possible applications, which we do
not address here, would be to asymptotics or inequalities for
ranks, exact formulas or distribution questions. We first consider
congruences satisfied by $\overline{N}(r,t;n)$. For ease of
notation we restrict to the case that $t$ is odd, the case $t$
even  can be considered similarly.
\begin{theorem}\label{congruences}
Let $t$ be a positive odd integer, and let $\ell \nmid 6t$ be a  prime.
If $j$ is a positive integer, then there are infinitely many
non-nested arithmetic progressions $An+B$ such that for every
$0\leq r <t$ we have
$$
\overline{N}(r,t;An+B)\equiv 0\pmod{\ell^j}.
$$
\end{theorem}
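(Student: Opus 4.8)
The plan is to follow the strategy pioneered by Ono and refined in \cite{BO2,B2}, extracting an honest holomorphic modular form from the weak Maass forms of Theorem \ref{maassform} and then applying the theory of modular forms modulo $\ell^j$. First I would fix a positive odd integer $t$, a prime $\ell\nmid 6t$, and a positive integer $j$. For each $0\le a<t$ with $(a,t)=1$ one has from Theorem \ref{maassform}(1) that $\mathcal{M}(a/t;z)$ is a weak Maass form of weight $\tfrac12$ on $\Gamma_1(16t^2)$ (or a smaller-level congruence subgroup, which only helps). The number $\overline{N}(r,t;n)$ is a linear combination, with coefficients that are $t$-th roots of unity, of the coefficients of the holomorphic parts $\mathcal{O}(\zeta_t^a;q)$; concretely $\overline{N}(r,t;n)=\frac1t\sum_{a=0}^{t-1}\zeta_t^{-ar}\,\overline{N}(\zeta_t^a;n)$, so the relevant object is a finite $\C$-linear combination of the $\mathcal{M}(a/t;z)$.

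Next I would kill the non-holomorphic part. The obstruction to $\mathcal{M}(a/t;z)$ being holomorphic is the period integral $J(a/t;z)$, whose image under $\xi_{1/2}$ (equivalently, whose "shadow") is a weight-$\tfrac32$ cusp form built from the theta functions $\Theta_{a,t}$. Following \cite{BO2}, one forms a suitable linear combination $G(z):=\sum_{a}c_a\,\mathcal{M}(a/t;z)$ whose non-holomorphic part is annihilated — or, more robustly, one multiplies by an appropriate power of the weight-$\tfrac32$ cusp form $\eta(z)^3$ and then applies the holomorphic projection / the argument that a weight-$k$ weakly holomorphic form reduces modulo $\ell^j$ to something in the image of the $U_\ell$ and Hecke operators. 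The cleanest route: by the standard fact (used in \cite{B2}) that for a weak Maass form $f$ of weight $\tfrac12$ there is a weakly holomorphic modular form $\widehat f$ of half-integral weight with $\widehat f\equiv f \pmod{\ell^j}$ after multiplying by a high power of $\eta^{\ell^j}$ (which is $\equiv 1\pmod{\ell^j}$ up to a shift), one replaces $G$ by a genuine weakly holomorphic modular form $\widehat{G}$ on $\Gamma_1(16t^2\ell^{?})$ congruent to it mod $\ell^j$, whose coefficients still encode $\overline{N}(r,t;n)\bmod\ell^j$.

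Then I would invoke the machinery of Serre and Treneer: a weakly holomorphic modular form of half-integral weight, after applying $U_\ell$ sufficiently many times, becomes congruent mod $\ell^j$ to a \emph{holomorphic} modular form of half-integral weight on $\Gamma_0(N\ell^{2})$ with Nebentypus, and the space of such forms mod $\ell^j$ is finite. By the pigeonhole/Galois-representation argument of Ono (cf. the treatment in \cite{On1,On3} and its half-integral-weight sharpening in \cite{Tr}), a positive proportion of primes $Q\equiv -1\pmod{16t^2\ell^{j+1}}$ act on this form as $T_{Q^2}\equiv 0\pmod{\ell^j}$, which forces the coefficients indexed by $Q\cdot n$ with $(n,Q)=1$ to vanish mod $\ell^j$. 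Translating back through the definition of $\overline{N}(r,t;n)$ and unwinding the $q$-expansion shift introduced by the $\eta$-multiplications yields an arithmetic progression $An+B$ (with $A$ divisible by $Q$ and the prescribed level, and the $B$'s chosen to avoid $Q\mid n$) on which $\overline{N}(r,t;An+B)\equiv 0\pmod{\ell^j}$ simultaneously for all $0\le r<t$; varying $Q$ over the positive-density set produces infinitely many non-nested such progressions.

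The main obstacle is \emph{not} the final Galois-theoretic step, which is essentially quotable, but the passage from the weak Maass form to a weakly holomorphic modular form with control of the level and weight: one must verify that the shadow (the weight-$\tfrac32$ theta combination) can be handled — either it cancels in the chosen $\C$-linear combination $\sum_a c_a\mathcal{M}(a/t;z)$, or its contribution is divisible by $\ell^j$ after multiplying by $\eta^{\ell^j\cdot m}$ — and that the resulting object genuinely lies on a congruence subgroup of $\Gamma_0(4)$-type with a well-defined Nebentypus, so that Shimura's theory and the Hecke action apply. Keeping the level bounded independently of $j$ (so that the relevant space of mod-$\ell^j$ forms stays finite) while absorbing the non-holomorphic part is the delicate point; this is exactly where one leans on the precise level statement in Theorem \ref{maassform} and on Treneer's refinement of Ono's theorem for half-integral weight.
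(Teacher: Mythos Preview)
Your overall architecture matches the paper: form a linear combination of the weak Maass forms $\mathcal{M}(a/t;z)$ so that the holomorphic part has coefficients $\overline{N}(r,t;n)-\overline{p}(n)/t$, pass to a weakly holomorphic modular form, and then invoke the Serre--Ono machinery (Theorem~4.2 of \cite{BO2}) to produce infinitely many non-nested progressions. Where your proposal has a genuine gap is precisely the step you flag as the ``main obstacle'': you have no working mechanism for eliminating the non-holomorphic part. Neither of your suggestions succeeds. A generic $\C$-linear combination $\sum_a c_a\,\mathcal{M}(a/t;z)$ will not have vanishing shadow, and there is no reason to expect the particular combination with $c_a=\zeta_t^{-ar}$ to do so. Multiplying by a power of $\eta(z)^3$ or $\eta^{\ell^j m}$ does nothing to the non-holomorphic part beyond rescaling it; multiplication by a holomorphic function cannot turn a non-holomorphic Maass form into a holomorphic one, nor make the non-holomorphic piece congruent to $0$ modulo $\ell^j$. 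Holomorphic projection is a different operation entirely and changes the Fourier coefficients.

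The device the paper actually uses is a quadratic twist, and it works because of the \emph{support} of the non-holomorphic part. The explicit Fourier expansion of $\mathcal{M}(a/c;z)$ (equation~(\ref{Fourier}) in the paper) shows that the non-holomorphic terms are supported on exponents $-m^2$, i.e.\ on negatives of squares. Hence if one restricts the Fourier expansion to those $n$ with $\left(\tfrac{n}{p}\right)=-\left(\tfrac{-1}{p}\right)$ for a prime $p\nmid 6t$, every term of the non-holomorphic part is killed, because $-m^2$ always satisfies $\left(\tfrac{-m^2}{p}\right)=\left(\tfrac{-1}{p}\right)$. The resulting twisted function is then a genuine weakly holomorphic modular form of weight $\tfrac12$ on $\Gamma_1(16t^2p^4)$, with level independent of $j$, and one applies Theorem~4.2 of \cite{BO2} directly. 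This support argument is the missing idea in your proposal; once you insert it, the rest of your sketch goes through.
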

\begin{theorem}  \label{congruences2}
Suppose that $\ell \geq 5$ is a prime, $m,u,\beta \in \N$ with $\leg{-\beta}{\ell}=-1$.
Then a positive proportion of primes $p \equiv -1 \pmod{\ell}$ have the property that for every
$0 \leq r \leq \ell^m -1$
$$
\overline{N}\left(r,\ell^m;p^3 n \right) \equiv 0 \pmod{\ell^u}
$$
for all $ n \equiv \beta \pmod{ \ell}$ that  are not divisible by $p$.
\end{theorem}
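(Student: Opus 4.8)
The plan is to deduce Theorem \ref{congruences2} from Theorem \ref{maassform} by applying the machinery developed by Ono and the first author (see \cite{B2,BO1,BO2}), together with the theory of Hecke operators on half-integral weight modular forms. First I would isolate the arithmetic progression modulo $\ell$ that we care about: for a fixed residue class $\beta$ with $\leg{-\beta}{\ell}=-1$, the point is that the sieving against squares built into a half-integral weight form is incompatible with a quadratic non-residue obstruction, so the relevant coefficients live in a controlled piece. Concretely, for a root of unity $\zeta_{\ell^m}^a$ the holomorphic part of $\mathcal{M}\left(\tfrac{a}{\ell^m};z\right)$ encodes $\sum_n \overline{N}\left(a,\ell^m;n\right)q^n$ in the sense that by the orthogonality of characters one has $\overline{N}(r,\ell^m;n) = \frac{1}{\ell^m}\sum_{a=0}^{\ell^m-1}\zeta_{\ell^m}^{-ar}\,\overline{N}(\zeta_{\ell^m}^a;n)$; hence it suffices to control all of the forms $\mathcal{M}\left(\tfrac{a}{\ell^m};z\right)$ for $0<a<\ell^m$ simultaneously modulo $\ell^u$.

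The second step is to pass from the weak Maass form to a genuine holomorphic modular form modulo $\ell^u$. The standard device here is to note that the non-holomorphic part $J\left(\tfrac{a}{c};z\right)$ is a period integral of the weight $\tfrac32$ theta function $\Theta_{a,c}$, and by Bruinier--Funke type arguments (or the arguments of \cite{BO2}) the $\xi$-operator, or equivalently a suitable Hecke-type projection, kills this piece after multiplying by an appropriate power of $\frac{(q)_\infty}{(-q)_\infty}$-related eta-quotient so that the result is integral. Since $\ell \geq 5$ and $\ell \nmid $ the relevant level-denominators, one then invokes the fact that the coefficients of a half-integral weight holomorphic modular form on $\Gamma_1(16\ell^{2m})$ which are supported on a fixed residue class with a non-residue obstruction form, after twisting, a cusp form; then a theorem of Serre on the $\ell$-adic density of Hecke eigenvalues (or the Deligne--Serre/Shimura-lift reduction) shows that a positive proportion of primes $p\equiv -1\pmod\ell$ act on this cusp form via $T_{p^2}$ as multiplication by $0$ modulo $\ell^u$. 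This is exactly the input that produces, for $n\equiv\beta\pmod\ell$ with $p\nmid n$, the vanishing of the $p^3 n$-th coefficient: the $p^3 n$ coefficient is governed by the $T_{p^2}$-eigenvalue and the $pn$ coefficient, and the non-residue condition forces the cross terms with $p^2\mid n$ to drop out, leaving $\overline{N}(r,\ell^m;p^3n)\equiv 0\pmod{\ell^u}$.

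Finally I would assemble the pieces uniformly in $a$: there are only finitely many ($\ell^m-1$) forms $\mathcal{M}\left(\tfrac{a}{\ell^m};z\right)$, each giving rise after the above reduction to a cusp form modulo $\ell^u$, and the Serre density statement applies to the \emph{finite direct sum} of these cusp forms, so a single positive-density set of primes $p\equiv -1\pmod\ell$ works for all $a$ at once, and hence for all $r$ via the character-orthogonality inversion. One must also check that $p\equiv -1\pmod\ell$ can be imposed simultaneously with the Chebotarev/Serre condition: this is where one uses that $-1$ is compatible with the image of the relevant mod-$\ell$ Galois representation, which is automatic since we only need a positive proportion and the congruence $p\equiv -1\pmod\ell$ cuts out a single class of positive density in the cyclotomic direction independent of the representation's image.

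The main obstacle I expect is the passage in step two from the weak Maass form to an honest modular form modulo $\ell^u$ with \emph{integral} coefficients supported correctly: unlike the classical rank case in \cite{BO2}, here the eta-quotient $\frac{(-q)_\infty}{(q)_\infty}$ appearing in \eqref{overrankgen} is not fixed under the relevant transformations, so one has to be careful that the holomorphic projection is modular on the right congruence subgroup and that clearing denominators to reach $\Z[\zeta_{\ell^m}]$-integral — then $\Z$-integral by taking traces or a norm — coefficients does not destroy the cusp-form property or the support condition; controlling the behavior at the cusps (condition (3) in the definition) under these manipulations, and ensuring the non-residue obstruction genuinely isolates a cuspidal piece rather than merely an Eisenstein contribution, is the delicate point.
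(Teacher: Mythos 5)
Your overall strategy coincides with the paper's: decompose $\sum_n \overline{N}(r,\ell^m;n)q^n$ via character orthogonality into the $\overline{p}(n)/\ell^m$ piece plus the $\ell^m-1$ functions $\mathcal{M}(j/\ell^m;z)$, observe that the non-holomorphic parts are supported on exponents $-n$ a perfect square so that projecting onto $\leg{-n}{\ell}=-1$ annihilates them, reduce to a cusp form modulo $\ell^u$, and finish with the standard Serre/Hecke-operator argument (a positive proportion of $p\equiv -1\pmod{\ell}$ with $T_{p^2}$ acting as $0$ mod $\ell^u$, and the coefficient of $q^{pn}$ in $f|T_{p^2}$ yielding the $p^3n$-th coefficient when $p\nmid n$). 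Two remarks on the mechanism: it is the quadratic twist, not the $\xi$-operator or any ``holomorphic projection,'' that removes the non-holomorphic part --- $\xi$ sends the Maass form to its shadow and does not return the holomorphic part as a modular object --- and you do state the correct mechanism elsewhere, so this is a presentational slip rather than a conceptual one.

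The genuine gap is the step you yourself flag as ``the delicate point'' and then do not carry out: proving that the twisted object is congruent modulo $\ell^u$ to an honest \emph{cusp} form. This is the entire content of the paper's Theorem \ref{main2} and occupies most of Section \ref{SectionCong2}. It is not automatic: $\eta(2z)/\eta^2(z)$ and the $\mathcal{M}(j/\ell^m;z)$ blow up at various cusps, so one must first multiply by a carefully chosen eta-quotient $\eta^{r_1}(2\ell z)\eta^{2\ell}(\ell z)$ with $r_1\ell\equiv -1\pmod{48}$, then by high powers of $E_{\ell,a}(z)=\eta^{\ell^a}(z)/\eta(\ell^a z)$ (which are $\equiv 1\pmod{\ell^u}$ and vanish at cusps inequivalent to $\infty$) to dispose of all but finitely many cusps, and finally check vanishing at the surviving cusps such as $\tfrac{1}{\ell^5}$ by an explicit computation: one commutes the twisting matrices $\left(\begin{smallmatrix}1&-\nu/\ell\\0&1\end{smallmatrix}\right)$ past the cusp-representing matrix, applies Corollary \ref{transcorollary2} and Newman's formula for $\omega_{h,k}$, and evaluates a Gauss sum $\frac{g}{\ell}\sum_\nu\leg{\nu}{\ell}e^{-2\pi i r_0\nu'/\ell}=\leg{-r_0}{\ell}$ at the leading exponent $r_0=\tfrac{1}{48}(\ell r_1+4\ell^2-3)$ to see that the combination $h-\leg{-1}{\ell}h_\ell$ gains an extra order of vanishing precisely because $\leg{-r_0}{\ell}=1$. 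Without this computation (or some substitute for it) the claim that the twist ``genuinely isolates a cuspidal piece'' is an assertion, not a proof, and the Serre density theorem cannot be applied. The rest of your outline --- the uniformity over the finitely many $a$, the compatibility of $p\equiv-1\pmod\ell$ with the density statement, and the Hecke recursion giving the $p^3n$-th coefficient --- matches the paper and is standard once the cusp form is in hand.
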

This directly implies.
\begin{corollary} \label{cor1}
If $ \ell \geq 5$ is a prime, $m,u \in \N$, then there are
infinitely many non-nested arithmetic progressions $An+B$ such
that
$$
\overline{N} \left(r,\ell^m;An+B \right) \equiv 0 \pmod{\ell^u}
$$
for all $ 0 \leq r \leq \ell^m -1$.
\end{corollary}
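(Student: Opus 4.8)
The plan is to deduce the corollary directly from Theorem \ref{congruences2} by a pigeonhole-type averaging over the residue $\beta$. First I would fix the prime $\ell \geq 5$ and the integers $m, u \in \N$. Since $\ell$ is odd, there is at least one residue $\beta \pmod \ell$ with $\leg{-\beta}{\ell} = -1$ (indeed exactly $(\ell-1)/2$ of them). Pick any such $\beta$ and let $p$ be one of the infinitely many primes $p \equiv -1 \pmod{\ell}$ furnished by Theorem \ref{congruences2} with this choice of $\beta$ (taking there, say, $m$ and $u$ as given). Then for every $0 \le r \le \ell^m - 1$ we have $\overline{N}(r, \ell^m; p^3 n) \equiv 0 \pmod{\ell^u}$ for all $n \equiv \beta \pmod \ell$ with $p \nmid n$.

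Next I would repackage this as an arithmetic progression in the variable being counted. Write the argument $p^3 n$ with $n \equiv \beta \pmod{\ell}$, $p \nmid n$: as $n$ runs over such integers, $N := p^3 n$ runs over the residue class $p^3 \beta \pmod{p^3 \ell}$, except for the sub-progression where $p \mid n$, i.e.\ $N \equiv 0 \pmod{p^4}$. So set $A := p^3 \ell$ and $B := p^3 \beta$ (reduced mod $A$ if one prefers $0 \le B < A$); then $\overline{N}(r, \ell^m; An + B) \equiv 0 \pmod{\ell^u}$ for all $r$ and all $n$ with $An + B \not\equiv 0 \pmod{p^4}$. The finitely many excluded $n$ in each period can be removed by passing to a sub-progression: since $\gcd(A, p^4) = p^3 \mid B$ but $p^4 \nmid B$ (as $p \nmid \beta$), the congruence $An + B \equiv 0 \pmod{p^4}$ either has no solution or pins $n$ down to a single class mod $p$; intersecting $An + B$ with a class of $n$ mod $p$ avoiding that bad class yields a genuine arithmetic progression $A'n + B'$ with $A' = pA$ on which the congruence $\overline{N}(r, \ell^m; A'n + B') \equiv 0 \pmod{\ell^u}$ holds for \emph{all} $n$ and all $0 \le r \le \ell^m - 1$.

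Finally, to get \emph{infinitely many non-nested} such progressions, I would observe that Theorem \ref{congruences2} supplies a positive proportion — hence infinitely many — primes $p \equiv -1 \pmod \ell$ with the stated property; distinct such primes $p_1 < p_2 < \cdots$ produce progressions with moduli $A'_i$ involving distinct prime factors $p_i$, and by choosing the $p_i$ to grow (and, if necessary, further refining so that no $B'_i$ lies in another $A'_j \Z + B'_j$) one obtains an infinite non-nested family. The main obstacle — really the only point requiring care — is the bookkeeping in the previous paragraph to clear the ``$p \nmid n$'' restriction and to arrange non-nestedness; everything substantive is already contained in Theorem \ref{congruences2}, and this is why the statement is phrased as a corollary rather than a theorem.
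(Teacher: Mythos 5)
Your deduction is correct and is exactly the route the paper intends: the authors give no argument beyond the phrase ``This directly implies,'' and your unpacking --- choosing $\beta$ with $\leg{-\beta}{\ell}=-1$, invoking Theorem \ref{congruences2}, rewriting $p^3n$ with $n\equiv\beta\pmod\ell$ as the progression $p^3\beta\pmod{p^3\ell}$, refining modulo $p$ to discard the class with $p\mid n$, and using distinct primes $p$ (whose moduli $p^4\ell$ are pairwise non-divisible) to get infinitely many non-nested progressions --- is the standard bookkeeping they leave implicit. No gaps.
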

\noindent
{\it Remark.}

\noindent
The congruences in Theorems \ref{congruences} and \ref{congruences2}
may be viewed as a combinatorial
decomposition of the overpartition function congruence
\begin{eqnarray} \label{overpartcong}
\overline{p}(An+B)\equiv 0\pmod{\ell^u}.
\end{eqnarray}
That (\ref{overpartcong}) holds for  infinitely many non-nested
arithmetic progressions $An+B$ was first observed by  Treneer
\cite{Tr}.

We next put identities involving rank differences for
overpartitions in the framework of weak Maass forms (see also
\cite{BOR}). For this define for a prime $\ell$ and integers $s_1$
and $s_2$ the function
 \begin{eqnarray*}
R_{s_1,s_2}(d) :=
\sum_{n =0}^{\infty} \left(\, \overline{N}(s_1,\ell,\ell n+d) - \overline{N}(s_2,\ell,\ell n+d) \right)\, q^{\ell n+d}.
\end{eqnarray*}
We provide a framework that could be used to show  an infinite
family of identities (see also \cite{BOR} for related results for
usual ranks).
\begin{theorem} \label{identities}
If  $\leg{d}{\ell} = - \leg{-1}{\ell}$,
then the function $R_{s_1,s_2}(d)$ is a weakly holomorphic modular  form on
$\Gamma_1 \left( 16 \ell^4\right)$.
\end{theorem}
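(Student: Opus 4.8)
The plan is to realize $R_{s_1,s_2}(d)$ as a linear combination of the holomorphic parts $\mathcal M\bigl(\tfrac{a}{\ell};z\bigr)$ from Theorem \ref{maassform} together with the correcting period integrals $J\bigl(\tfrac{a}{\ell};z\bigr)$, and then to show that under the hypothesis $\leg{d}{\ell}=-\leg{-1}{\ell}$ all the non-holomorphic contributions cancel, leaving a weakly holomorphic modular form. Concretely, I would first use the orthogonality of additive characters mod $\ell$ to write the sieved generating function
\begin{equation*}
\sum_{n\ge 0}\overline N(s,\ell,\ell n+d)\,q^{\ell n+d}
=\frac{1}{\ell}\sum_{j=0}^{\ell-1}\zeta_\ell^{-jd}
\Bigl(\tfrac{1}{\ell}\sum_{k=0}^{\ell-1}\zeta_\ell^{-ks}\,\mathcal O(\zeta_\ell^{k};\zeta_\ell^{j}q^{1/1})\Bigr),
\end{equation*}
i.e. extract the arithmetic progression $\ell n+d$ and the rank residue $s\pmod\ell$ by double character sums; the $k=0$ term is the rank-independent piece $\overline P$, which drops out of the \emph{difference} $R_{s_1,s_2}(d)$. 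Thus $R_{s_1,s_2}(d)$ is, up to the explicit roots of unity $\zeta_\ell^{-ks_1}-\zeta_\ell^{-ks_2}$, a finite combination of the functions $\mathcal O(\zeta_\ell^{a};q)$ (for $1\le a\le \ell-1$) restricted to the progression $d\pmod\ell$.

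Next I would replace each $\mathcal O(\zeta_\ell^{a};q)=\mathcal M\bigl(\tfrac{a}{\ell};z\bigr)+J\bigl(\tfrac{a}{\ell};z\bigr)$ and invoke Theorem \ref{maassform}(1): since $\ell$ is an odd prime, each $\mathcal M\bigl(\tfrac{a}{\ell};z\bigr)$ is a weak Maass form of weight $\tfrac12$ on $\Gamma_1(16\ell^2)$, hence $\Delta_{1/2}\mathcal M=0$. Applying the same projection operator (which sends modular forms on $\Gamma_1(16\ell^2)$ to modular forms on $\Gamma_1(16\ell^4)$, the level going up by $\ell^2$ from the two-step sieve) to the Maass-form parts shows that the ``$\mathcal M$ piece'' of $R_{s_1,s_2}(d)$ is still a weight-$\tfrac12$ harmonic weak Maass form on $\Gamma_1(16\ell^4)$. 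The key point — and the main obstacle — is to show that the companion ``$J$ piece'', a combination of the period integrals of the cuspidal theta functions $\Theta_{a,\ell}$, vanishes identically once we restrict to exponents $n$ with $n\equiv d\pmod\ell$ and impose $\leg{d}{\ell}=-\leg{-1}{\ell}$. The nonholomorphic part of $J\bigl(\tfrac{a}{\ell};z\bigr)$ is (up to constants) a period integral whose Fourier coefficients are supported on exponents of the form $-m^2/(\text{something})$ coming from $\theta(\alpha,\beta;\tau)$; sieving to the progression $d\pmod\ell$ forces $-m^2\equiv d\pmod\ell$, i.e. $d$ to be $-1$ times a square mod $\ell$. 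The hypothesis $\leg{d}{\ell}=-\leg{-1}{\ell}$ says precisely that $-d$ is a \emph{non}-residue, so no such $m$ exists and every theta coefficient in the relevant progression is zero. I would make this rigorous by computing the shadow: $\xi_{1/2}\mathcal M\bigl(\tfrac a\ell;z\bigr)$ is a multiple of $\Theta_{a,\ell}$, the sieved combination has shadow a multiple of the $d\pmod\ell$ part of a sum of such thetas, and that part is $0$ by the quadratic-residue count just described; a harmonic weak Maass form of weight $\tfrac12$ with vanishing shadow is weakly holomorphic.

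Finally, having established that $R_{s_1,s_2}(d)$ is a weakly holomorphic modular form of weight $\tfrac12$ on $\Gamma_1(16\ell^4)$ (harmonicity plus trivial shadow plus at most linear exponential growth at the cusps, the growth condition being inherited from Theorem \ref{maassform}(3) and preserved by the sieving operators), the proof is complete. The only genuinely delicate bookkeeping is (i) tracking the level and Nebentypus through the two character sums — here one uses that $U_\ell$ and the twist by the additive character mod $\ell$ together raise the level by at most $\ell^2$ and that weight $\tfrac12$ forces us to stay inside $\Gamma_0(4)$, so $16\ell^2\mapsto 16\ell^4$ — and (ii) the residue computation showing the shadow dies, which is where the arithmetic condition on $d$ enters in an essential way. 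I expect step (ii) to be the heart of the argument; everything else is the standard Maass-form-to-modular-form machinery already used for ordinary ranks in \cite{BOR}.
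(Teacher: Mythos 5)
Your proposal is correct and follows essentially the same route as the paper: write the sieved rank generating functions as combinations of the $\mathcal{M}\left(\frac{j}{\ell};z\right)$ via roots of unity (the paper's equation for the function in (\ref{partmaass})), note that the difference $R_{s_1,s_2}(d)$ kills the rank-independent $\overline{p}(n)/\ell$ term, observe that restriction to the progression $d\pmod{\ell}$ raises the level from $16\ell^2$ to $16\ell^4$, and use the explicit Fourier expansion (\ref{Fourier}) showing the non-holomorphic part is supported on exponents $-m^2$, so that $\leg{d}{\ell}=-\leg{-1}{\ell}$ (i.e.\ $-d$ a non-residue) forces it to vanish. Your shadow/$\xi_{1/2}$ phrasing of that last step is an equivalent rigorization of what the paper does directly with the incomplete-Gamma expansion.
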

Using Theorem  \ref{identities}, we could  prove concrete identities using   the valence formula.
 Since  the computations are straightforward but  lengthy  (coming from the fact that $\Gamma_1\left( 16 \ell^4\right)$  has a lot of cusps),
 we chose not to  prove individual identities.
Instead we just list some identities, and  their truth follows from work of the  second  author and Osburn \cite{LO}.

The paper is organized as follows. In Section  \ref{SectionTrans},
we prove a transformation law for the rank generating functions in
the case $c \not=2$. In Section \ref{SectionMaass}, we show the
first part of Theorem \ref{maassform}. The main step is to
recognize  the Mordell type integrals occurring the transformation
law of the rank generating functions as integrals of theta
functions. In Section \ref{section-1}  we treat the case $c=2$
which is more  complicated  due to double poles of the generating
function. In Sections \ref{SectionCong1} and \ref{SectionCong2} we
show congruences for $\overline{N}(r,t,n)$. Section
\ref{SectionIdentities} is dedicated the proof of Theorem
\ref{identities}.
\section*{Acknowledgements}
The authors thank the referee for many helpful suggestions which improved the exposition of the paper.   
\section{A transformation law}  \label{SectionTrans}
Here we consider modularity properties for $\mathcal{O}\left(\frac{a}{c};q  \right)$.
For this  we need some  notation.
Let $c>2$ and $k$ be positive integers. Let $\widetilde k$ be either   $0$ or $1$ depending on whether  $k$ is even or   odd. Moreover let
$k_1:=\frac{k}{(k,c)}$, $c_1= \frac{c}{(c,k)}$, and define the integer $0\leq l <c_1$ by the congruence
$ l \equiv a k_1 \pmod{c_1}$.
If $\frac{b}{c} \in (0,1)$, then
define the integers $s(b,c)$ and $t(b,c)$ (for $\frac{b}{c}\not=\frac{1}{2}$)
by
\begin{eqnarray*}
s(b,c):=\begin{cases}
0 \ \ \ \ \ &{\text {\rm if}}\ 0<\frac{b}{c}\leq \frac{1}{4},\\
1 \ \ \ \ \ &{\text {\rm if}}\ \frac{1}{4} <  \frac{b}{c}\leq \frac{3}{4},\\
2 \ \ \ \ \ &{\text {\rm if}}\ \frac{3}{4}< \frac{b}{c}<1,
\end{cases}     \qquad
t(b,c):=\begin{cases}
1 \ \ \ \ \ &{\text {\rm if}}\ 0<\frac{b}{c}<\frac{1}{2},\\
3 \ \ \ \ \ &{\text {\rm if}}\ \frac{1}{2}< \frac{b}{c}<1.
\end{cases}
\end{eqnarray*}
In particular let $s:= s(l,c_1)$ and $t:= t(l,c_1)$.
Let $h'$ be defined by $h h' \equiv -1 \pmod k$.
Moreover let $\omega_{h,k}$ be given by
\begin{eqnarray} \label{omega}
\omega_{h,k} :=
\exp\left(\pi i
 \sum_{\mu \pmod k}  \left( \left( \frac{\mu}{k}\right) \right)   \left( \left( \frac{h \mu}{k}\right) \right)
  \right),
\end{eqnarray}
where
\begin{eqnarray*}
((x)):= \left \{
\begin{array}{ll}
x- \lfloor x \rfloor - \frac{1}{2} &\text{if } x \in \R \setminus \Z ,\\
0&\text{if } x \in \Z.
\end{array}
\right.
\end{eqnarray*}
Define    for $q=e^{2 \pi i z}$  the following functions.
\begin{eqnarray*}
\mathcal{U}\left (\frac{a}{c};q\right)
&:=&  \mathcal{U}\left (\frac{a}{c};z\right) :=
\sin \left(\frac{\pi a}{c} \right)
\frac{\eta\left(\frac{z}{2}\right)}{\eta^2(z)}  \sum_{n  \in \Z }
\frac{ \left( 1+q^n \right)  q^{n^2+\frac{n}{2}}}
{1 - 2 q^n \cos  \left(\frac{2 \pi a}{c} \right)+q^{2n}} ,\\
\mathcal{U}(a,b,c;q)
&:= &  \mathcal{U}(a,b,c;z):=
\frac{\eta\left(\frac{z}{2} \right)}{\eta^2(z)}
e^{ \frac{\pi i a}{c}\left(\frac{4b}{c} -1 -2 s(b,c)\right)}
q^{ \frac{s(b,c)}{c} + \frac{b}{2c}-\frac{b^2}{c^2}}
\sum_{m \in \Z}
\frac{q^{\frac{m}{2}(2m+1) + m s(b,c) }}{1-e^{-\frac{2\pi i a}{c}}\, q^{m + \frac{b}{c} } }, \\
\mathcal{V}(a,b,c;q)
&:=&   \mathcal{V}(a,b,c;z):=
\frac{\eta\left(\frac{z}{2}\right)}{\eta^2(z)}
e^{\frac{\pi ia}{c} \left(\frac{4b}{c} -1-2 s(b,c)  \right) } \cdot
q^{\frac{s(b,c) b}{c} +\frac{b}{2c} - \frac{b^2}{c^2}}
\sum _{m\in \Z}
\frac{q^{\frac{m}{2}(2m+1) + ms(b,c)}}{1- e^{-\frac{2 \pi i a}{c}} \cdot q^{m+\frac{b}{c}}},\\
\mathcal{O}(a,b,c;q)
&:=& \mathcal{O}(a,b,c;z):=
\frac{\eta(2 z)}{\eta^2(z)}
e^{\frac{\pi ia}{c} \left(\frac{4b}{c} -1- t(b,c)  \right) } \cdot
q^{\frac{t(b,c)b}{2c} +\frac{b}{2c}- \frac{b^2}{c^2}}
\sum _{m \in \Z}
(-1)^m
\frac{q^{\frac{m}{2}(2m+1) + \frac{mt(b,c)}{2}}  }{1- e^{-\frac{2 \pi i a}{c}} \cdot q^{m+\frac{b}{c}}}
,\\
\mathcal{V}\left(\frac{a}{c};q\right)
&:= &
  \mathcal{V}\left(\frac{a}{c}; z \right) :=
\frac{\eta(2 z)}{\eta^2(z)}
q^{\frac{1}{4} }
\sum _{m \in \Z}
\frac{ q^{m^2 + m } \cdot \left(1+ e^{-\frac{2 \pi i a}{c} } \cdot q^{m + \frac{1}{2}} \right)}{1- e^{-\frac{2 \pi i a}{c}} \cdot q^{m +  \frac{1}{2}}} .
\end{eqnarray*}
Moreover let
\begin{eqnarray*}
H_{a,c}(x):= \frac{e^x }{1- 2 \cos \left(\frac{2 \pi a}{c}  \right)e^x + e^{2x}}.
\end{eqnarray*}
Then
\begin{eqnarray*}
H_{a,c}(-x)&=& H_{a,c}(x),\\
H_{a,c}(x + 2 \pi i )&=& H_{a,c}(x).
\end{eqnarray*}
Moreover define for an integer $\nu$ the Mordell type integral
\begin{eqnarray*}
I_{a,c,k,\nu} (w):=
\int_{\R} e^{- \frac{2 \pi  w x^2}{k}}
 H_{a,c} \left( \frac{2 \pi i \nu}{k}  - \frac{2 \pi w x}{k}  -  \frac{\widetilde k \pi i }{2k}\right)\, dx.
\end{eqnarray*}
For $k$ even we have to take the principal part of the integral.
We are now ready to show  the transformation law of $\mathcal{O}\left(\frac{a}{c};q  \right)$.
\begin{theorem} \label{Otransfo}
Assume the notation above. Moreover, let
$w \in \C$ with $\re(w)>0$,
$q:=e^{\frac{2 \pi i }{k} (h+iw)}$, and    $q_1:=e^{\frac{2 \pi i }{k} \left(h'+\frac{i}{w}\right)}$.
\begin{enumerate}
\item   If $c|k$ and $k$ is even, then we have
\begin{multline*}
\mathcal{O} \left( \frac{a}{c};q\right)=
(-1)^{k_1}   i   \cdot e^{- \frac{2 \pi i a^2 h'k_1}{c}} \cdot
 \tan \left( \frac{\pi a}{c} \right) \cdot   \cot \left( \frac{\pi a h'}{c}\right)   \frac{\omega_{h,k}^2}{ \omega_{h,k/2} } \,
 w^{-\frac{1}{2}}  \cdot
 \mathcal{O} \left(\frac{ah'}{c};q_1  \right)
\\  +
  \frac{4 \cdot \sin^2\left( \frac{\pi a }{c}  \right) \cdot   \omega_{h,k}^2}{\omega_{h,k/2} \cdot k }
\cdot  w^{\frac{1}{2}}
 \sum_{\nu \pmod k} (-1)^{\nu}\,
 e^{ - \frac{2\pi i h' \nu^2}{k} }
    \cdot
 I_{a,c,k,\nu}(w).
\end{multline*}
\item  If $c|k$ and $k$ is  odd, then we have
\begin{multline*}
\mathcal{O} \left( \frac{a}{c};q\right)= \sqrt{2}
 i \cdot e^{\frac{\pi i h'}{8k}  - \frac{2 \pi i a^2 h'k_1}{c}}
 \tan \left( \frac{\pi a}{c} \right)  \frac{\omega_{h,k}^2}{\omega_{2h,k} } \cdot
 w^{-\frac{1}{2}}  \cdot
 \mathcal{U} \left(\frac{ah'}{c};q_1  \right)  \\
 +    \frac{4\sqrt{2}  \cdot \sin^2\left( \frac{\pi a }{c}  \right) \cdot   \omega_{h,k}^2}{\omega_{2h,k}\cdot k }
\cdot  w^{\frac{1}{2}}
 \sum_{ \nu  \pmod k} e^{-\frac{\pi i  h'}{k} (2 \nu^2- \nu)}   \cdot
 I_{a,c,k,\nu}(w).
\end{multline*}
\item If $c \nmid k$, $2|k$, and $c_1 \not=2$, then  we have
\begin{multline*}
\mathcal{O} \left( \frac{a}{c};q\right)=
  -  2 e^{- \frac{2 \pi i a^2 h'k_1}{c_1c}}
 \tan \left( \frac{\pi a}{c} \right)    \frac{ \omega_{h,k}^2}{    \omega_{h,k/2} } \,
 w^{-\frac{1}{2}}  \cdot   (-1)^{c_1(l+k_1)} \,
 \mathcal{O} \left(ah',\frac{lc}{c_1} ,c;q_1  \right) \\
  +
  \frac{4 \sin^2\left( \frac{\pi a }{c}  \right) \cdot   \omega_{h,k}^2}{\omega_{h,k/2} \cdot k }
\cdot  w^{\frac{1}{2}}
 \sum_{\nu \pmod k} (-1)^{\nu}\,
 e^{ - \frac{2\pi i h' \nu^2}{k} }
    \cdot
 I_{a,c,k,\nu}(w).
\end{multline*}
\item If $c \nmid k$,  $2|k$, and $c_1=2$, then  we have
\begin{multline*}
\mathcal{O} \left( \frac{a}{c};q\right)=
  -  e^{- \frac{\pi i a^2 h'k_1}{c}}
 \cdot \tan \left( \frac{\pi a}{c} \right)   \frac{\omega_{h,k}^2}{   \omega_{h,k/2} } \cdot
 w^{-\frac{1}{2}}  \cdot
 \mathcal{V} \left(\frac{a h'}{c};q_1  \right) \\
  +
  \frac{4 \sin^2\left( \frac{\pi a }{c}  \right) \cdot   \omega_{h,k}^2}{\omega_{h,k/2} \cdot k }
\cdot  w^{\frac{1}{2}}
 \sum_{\nu \pmod k} (-1)^{\nu}\,
 e^{ - \frac{2\pi i h' \nu^2}{k} }
    \cdot
 I_{a,c,k,\nu}(w).
\end{multline*}
\item  If $c \nmid k$, $2 \nmid k$, and $c_1 \not=4$, then we have
\begin{multline*}
\mathcal{O} \left( \frac{a}{c};q\right)=
-  \sqrt{2} e^{\frac{\pi i h'}{8k}    -\frac{2 \pi i h' a^2k_1}{cc_1}  }            \cdot  \tan \left( \frac{\pi a}{c} \right) \frac{\omega_{h,k}^2}{  \omega_{2h,k}} \,
 w^{-\frac{1}{2}}  \cdot
 \mathcal{U} \left(ah', \frac{l c}{c_1}, c;q_1  \right)  \\
  +    \frac{4 \sqrt{2}  \cdot \sin^2\left( \frac{\pi a }{c}  \right) \cdot   \omega_{h,k}^2}{\omega_{2h,k}\cdot k }
\cdot  w^{\frac{1}{2}}
 \sum_{ \nu  \pmod k} e^{-\frac{\pi i  h'}{k} (2 \nu^2- \nu)}   \cdot
 I_{a,c,k,\nu}(w).
   \end{multline*}
   \item  If $c \nmid k$, $2 \nmid k$, and $c_1 =4$, then we have
\begin{multline*}
\mathcal{O} \left( \frac{a}{c};q\right)=
-  e^{\frac{\pi i h'}{8k}    -\frac{2 \pi i h' a^2k_1}{cc_1}  }           \cdot  \tan \left( \frac{\pi a}{c} \right) \frac{\omega_{h,k}^2}{\sqrt{2} \cdot   \omega_{2h,k}} \cdot
 w^{-\frac{1}{2}}  \cdot
 \mathcal{V} \left(ah', \frac{l c}{c_1}, c;q_1  \right)  \\
  +    \frac{4 \sqrt{2}  \cdot \sin^2\left( \frac{\pi a }{c}  \right) \cdot   \omega_{h,k}^2}{\omega_{2h,k}\cdot k }
\cdot  w^{\frac{1}{2}}
 \sum_{ \nu  \pmod k} e^{-\frac{\pi i  h'}{k} (2 \nu^2- \nu)}   \cdot
 I_{a,c,k,\nu}(w).
   \end{multline*}
   \end{enumerate}
\end{theorem}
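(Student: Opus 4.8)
The plan is to derive all six identities in one stroke from the classical circle-method machinery, following the global strategy of Bringmann--Ono \cite{BO2} but --- as flagged in the fourth remark --- computing the images under the relevant Möbius maps directly rather than guessing them. The starting point is the Appell--Lerch form of $\mathcal{O}(\zeta_c^a;q)$ in (\ref{overrankgen}). Writing $u=\zeta_c^a$ one has $(1-u)(1-u^{-1})=4\sin^2(\pi a/c)$ and $(1-uq^n)(1-u^{-1}q^n)=1-2\cos(2\pi a/c)q^n+q^{2n}$, and, since the would-be $n=0$ term of the resulting bilateral sum equals $4\sin^2(\pi a/c)\,H_{a,c}(0)=1$, the identity collapses to the clean shape
\[
\mathcal{O}\!\left(\tfrac{a}{c};q\right)=4\sin^2\!\left(\tfrac{\pi a}{c}\right)\frac{\eta(2z)}{\eta^2(z)}\sum_{n\in\Z}(-1)^n e^{\frac{2\pi i n^2 h}{k}}\,e^{-\frac{2\pi n^2 w}{k}}\,H_{a,c}\!\left(\tfrac{2\pi i n}{k}(h+iw)\right),
\]
valid for $q=e^{\frac{2\pi i}{k}(h+iw)}$. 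This already explains the factor $4\sin^2(\pi a/c)$ attached to the Mordell integrals in the statement, and it isolates the eta-quotient $\overline{P}(q)=\eta(2z)/\eta^2(z)$ as a prefactor to be handled separately.

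Next I would split $n$ into arithmetic progressions modulo $k$ (working modulo $2k$ when $k$ is odd to absorb the $(-1)^n$-twist and the hidden two-periodicity of $H_{a,c}$, then reducing), write $n=\nu+km$, complete the square in the Gaussian exponent, and apply Poisson summation to each inner sum over $m$ --- in the principal-value sense when $k$ is even, which is exactly the source of the ``principal part'' caveat in the definition of $I_{a,c,k,\nu}$. Since $H_{a,c}$ is smooth and exponentially decaying along $\R$, the Fourier transform of $(\text{Gaussian})\times H_{a,c}$ is again a Gaussian in the dual variable times a Mordell-type integral; summing over the dual variable and over $\nu$ produces exactly the Gauss sums $\sum_\nu(-1)^\nu e^{-2\pi i h'\nu^2/k}$ for $k$ even, resp.\ $\sum_\nu e^{-\pi i h'(2\nu^2-\nu)/k}$ for $k$ odd, weighting the integrals $I_{a,c,k,\nu}(w)$ --- the second lines of all six formulas. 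The poles of $H_{a,c}$, which sit at $e^x=e^{\mp 2\pi ia/c}$ and are swept when the contour is shifted by $1/w$, contribute residues proportional to $\tfrac{4\sin^2(\pi a/c)}{\pm 2i\sin(2\pi a/c)}=\mp i\tan(\pi a/c)$; resumming these residues over the shifted lattice produces the Lerch-type series in $q_1=e^{\frac{2\pi i}{k}(h'+i/w)}$, which is where the leading $\tan(\pi a/c)$ (and the $i$ in part (1)) come from.

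Which auxiliary function one lands in --- $\mathcal{O}(ah'/c;q_1)$, $\mathcal{U}(ah'/c;q_1)$, $\mathcal{V}(ah'/c;q_1)$, $\mathcal{O}(ah',\tfrac{lc}{c_1},c;q_1)$, $\mathcal{U}(\cdots)$ or $\mathcal{V}(\cdots)$ --- is then forced by two arithmetic inputs. The parity of $k$ governs the eta-quotient: with $z=\tfrac{h+iw}{k}$ the classical Dedekind transformation gives $\eta(z)\mapsto \omega_{h,k}\,w^{-1/2}(\cdots)$, and for $k$ even $\eta(2z)=\eta\!\big(\tfrac{h+iw}{k/2}\big)$ has denominator $k/2$, producing the multiplier $\omega_{h,k}^2/\omega_{h,k/2}$; for $k$ odd, $2z$ still has denominator $k$ but the rescaling of the argument together with the recombination of the three $q^{1/24}$-factors into the nome exponent of $\eta(z_1/2)/\eta^2(z_1)$ turns $\mathcal{O}$-sums into $\mathcal{U}$-sums and yields the extra $\sqrt2$, the $\omega_{2h,k}$, and the phase $e^{\pi i h'/(8k)}$. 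The size of $c_1=c/(c,k)$ governs the Lerch series: when $c\mid k$ one has $c_1=1$, $l=0$ and the residues reassemble into the ``$\tfrac{a}{c}$''-versions $\mathcal{O}(\cdot)$, $\mathcal{U}(\cdot)$; when $c\nmid k$ one gets the three-parameter versions with shift $b/c=l/c_1$ (the integer $l\equiv ak_1\pmod{c_1}$ being precisely the new numerator, with $s(b,c)=s(l,c_1)$, $t(b,c)=t(l,c_1)$ recording where $b/c$ falls in $(0,1)$ and hence the $q$-power corrections $q^{s(b,c)/c+b/(2c)-b^2/c^2}$, $q^{t(b,c)b/(2c)+b/(2c)-b^2/c^2}$ coming out of completing the square); and the degenerate cases $c_1=2$ and $c_1=4$ are special because there $b/c\in\{\tfrac12\}$ resp.\ $b/c\in\{\tfrac14,\tfrac34\}$, so the shifted lattice $m+b/c$ is stable under $m\mapsto -1-m$ up to integers and the naive Lerch series --- which would be ill-defined --- folds into the symmetrized $\mathcal{V}$-type sum with numerator $1+e^{-2\pi ia/c}q^{m+1/2}$. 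Collecting the eta-multiplier, the Gauss-sum normalization, the residue constants, the phases $e^{-2\pi ia^2h'k_1/c}$ (arising from re-expanding $q_1^{(\nu+\cdots)^2}$ after the $1/w$-inversion) and the sign $(-1)^{c_1(l+k_1)}$ then yields the six displayed formulas.

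The main obstacle will not be any single idea but the relentless case-by-case bookkeeping in the last step: cleanly separating the polar part of $H_{a,c}$ from the smooth part in the Poisson-summed expression and matching its residue sum, with the correct phase and sign, to the right auxiliary function --- in particular verifying the cocycle-type relations among $\omega_{h,k}$, $\omega_{h,k/2}$, $\omega_{2h,k}$ and pinning down $(-1)^{k_1}$, $(-1)^{c_1(l+k_1)}$, $\cot(\pi ah'/c)$ and $e^{-2\pi ia^2h'k_1/c}$ --- together with the separate, delicate handling of $c_1=2,4$, where the regularization must be carried out exactly as in the definitions of $\mathcal{V}(a/c;q)$ and $\mathcal{V}(a,b,c;q)$, and of the even-$k$ case, where the Poisson step and the integrals $I_{a,c,k,\nu}$ must be read in the principal-value sense throughout.
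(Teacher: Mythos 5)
Your proposal follows essentially the same route as the paper's proof: rewriting $\mathcal{O}(a/c;q)$ as a bilateral sum in $H_{a,c}$ with the $n=0$ term absorbing the constant, splitting $n$ modulo $k$ and applying Poisson summation (with principal values where needed), shifting the contour so that the residues of $H_{a,c}$ reassemble into the Lerch-type series in $q_1$ while the shifted integrals yield the Gauss-sum-weighted Mordell integrals, and finishing with the Dedekind eta transformation to produce the multipliers $\omega_{h,k}^2/\omega_{h,k/2}$ resp.\ $\omega_{h,k}^2/\omega_{2h,k}$. Your identification of the roles of the parity of $k$ and of $c_1$ (with $c_1=2,4$ degenerate because poles land on the shifted contour, forcing the symmetrized $\mathcal{V}$-sums) matches the paper's case analysis, and the level of detail you leave to ``bookkeeping'' is comparable to what the paper itself leaves as ``a lengthy but straightforward calculation.''
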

\begin{corollary} \label{transcorollary}
Assume  that $z \in \H$, $0 <a<c$ with $c \not= 2$.
\begin{enumerate}
\item If $c \not=4$, then  we have
 \begin{eqnarray*}
 \mathcal{O} \left( \frac{a}{c}; -\frac{1}{z}\right)= -  \sqrt{2}
  \tan \left(\frac{\pi a}{c} \right)
  \cdot
 (- i z)^{\frac{1}{2}}  \cdot
 \mathcal{U} (0,a,c;z) +    4
  \sqrt{2}  \cdot \sin^2\left( \frac{\pi a }{c}  \right)   \cdot (-i z)^{-\frac{1}{2}  }\cdot
    I_{a,c,1,0}\left(\frac{i}{z}\right).
 \end{eqnarray*}
 \item If $c  =4$, then  we have
 \begin{eqnarray*}
 \mathcal{O} \left( \frac{a}{c}; -\frac{1}{z}\right)= -
\frac{
   \tan \left(\frac{\pi a}{c} \right)  }{\sqrt{2}}  \,     (- i z)^{\frac{1}{2}}  \cdot
 \mathcal{V} (0,a,c;z) +   4
  \sqrt{2}  \cdot \sin^2\left( \frac{\pi a }{c}  \right)   \cdot (-i z)^{-\frac{1}{2}  }\cdot
    I_{a,c,1,0}  \left(\frac{i}{z}\right).
 \end{eqnarray*}
 \end{enumerate}
\end{corollary}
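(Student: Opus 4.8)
The plan is to derive Corollary \ref{transcorollary} from Theorem \ref{Otransfo} by specializing to the case $k=1$, $h=h'=0$, which corresponds precisely to the transformation $z \mapsto -1/z$. First I would observe that when $k=1$ we have $(k,c)=1$, hence $k_1=1$, $c_1=c$, $\widetilde k = 1$, and the residue $l$ defined by $l \equiv a k_1 \pmod{c_1}$ becomes $l \equiv a \pmod c$, so $l=a$ (using $0<a<c$). Since $c \not= 2$ we are in one of cases (5) or (6) of the theorem: case (6) when $c_1 = c = 4$, and case (5) when $c \not= 4$ (note $2 \nmid k$ holds trivially since $k=1$, and $c \nmid k$ holds since $c > 1$). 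Thus $\frac{lc}{c_1} = \frac{ac}{c} = a$, so the relevant functions on the right are $\mathcal{U}(0,a,c;z)$ and $\mathcal{V}(0,a,c;z)$ with first argument $ah' = 0$.

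Next I would set $q = e^{\frac{2\pi i}{k}(h+iw)} = e^{-2\pi w}$ with $k=1$, $h=0$; writing $w = -iz$ (so that $\re(w) > 0$ iff $z \in \H$) gives $q = e^{2\pi i z}$, consistent with the paper's convention, and then $q_1 = e^{\frac{2\pi i}{k}(h' + i/w)} = e^{2\pi i (-1/z)}$, i.e. $q_1$ is the nome at $-1/z$. The prefactor arithmetic then simplifies drastically: with $h'=0$ all the exponential factors $e^{\frac{\pi i h'}{8k}}$ and $e^{-\frac{2\pi i h' a^2 k_1}{c c_1}}$ become $1$, and the sum over $\nu \pmod k$ collapses to the single term $\nu = 0$, giving $I_{a,c,1,0}(w) = I_{a,c,1,0}(i/z)$ after substituting $w = -iz$ — wait, I should be careful: the Mordell integral on the right of the theorem is $I_{a,c,k,\nu}(w)$, and with $w = -iz$ one has $I_{a,c,1,0}(-iz)$; but the corollary states $I_{a,c,1,0}(i/z)$. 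The resolution is that the coefficient of the Mordell integral in the theorem carries $w^{1/2}$, and one rewrites $w^{1/2} I_{a,c,1,0}(w)$ in terms of the variable $i/z$; alternatively, and more cleanly, one uses that the whole identity is being read as a function of $z$ through $q$ and $q_1$, and $w^{1/2} = (-iz)^{1/2}$ while the factor multiplying $\mathcal{U}$ or $\mathcal{V}$ carries $w^{-1/2} = (-iz)^{-1/2}$. I would therefore track the powers of $w$ explicitly: the $\mathcal{U}$-term picks up $w^{-1/2} = (-iz)^{-1/2}$, matching... no — the corollary has $(-iz)^{1/2}$ on the $\mathcal{U}$-term. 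This sign discrepancy in the exponent must come from the Weil/theta-multiplier factor $\frac{\omega_{h,k}^2}{\omega_{2h,k}}$ together with the $\sqrt 2$; at $h=0$, $k=1$ one has $\omega_{0,1} = 1$ by the empty-sum convention in \eqref{omega}, so that factor is just $1$, and I would need to reconcile the powers by noting that $\mathcal{O}(\frac{a}{c};q)$ at argument $z \mapsto -1/z$ versus $z$ introduces its own Jacobian — more precisely I should apply the theorem with the roles arranged so that the \emph{left} side is $\mathcal{O}(\frac{a}{c};q_1)$ evaluated at $-1/z$, then invert.

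The cleanest route, which I would actually carry out, is: apply Theorem \ref{Otransfo}(5) (resp. (6)) with $k=1$, $h=h'=0$, and $w = i/z$ rather than $w=-iz$; then $q = e^{\frac{2\pi i}{1}(0 + i(i/z))} = e^{-2\pi/z}\cdot(\text{phase})$ — hmm, this needs $\re(i/z)>0$, i.e. $\im(-1/z)>0$, which again holds for $z\in\H$. With this choice $q$ becomes the nome at $-1/z$ and $q_1$ becomes the nome at $z$; the identity then reads $\mathcal{O}(\frac{a}{c}; -1/z) = (\text{prefactor}) \, w^{-1/2} \mathcal{U}(0,a,c; z) + (\text{prefactor}) \, w^{1/2} I_{a,c,1,0}(i/z)$ with $w^{-1/2} = (i/z)^{-1/2} = (-iz)^{1/2}$ up to a controllable fourth root of unity, and $w^{1/2} = (i/z)^{1/2} = (-iz)^{-1/2}$ similarly. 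Matching the constants $-\sqrt2 \tan(\pi a/c)$ and $4\sqrt2 \sin^2(\pi a/c)$ then reduces to checking the multiplier $\frac{\omega_{0,1}^2}{\omega_{0,1}} = 1$ and the branch of the square root — the one genuine subtlety being to verify that $(i/z)^{-1/2}$ equals $(-iz)^{1/2}$ on $\H$ with the principal branch, which holds since $i/z$ and $-iz$ both lie in the right half-plane and their product is $1$. The main obstacle I anticipate is purely bookkeeping: correctly pinning down the branch of $w^{\pm 1/2}$ and confirming that the normalizing constants $(-1)^{c_1(l+k_1)}$-type signs and the $e^{\pi i a(\cdots)/c}$ phases inside $\mathcal{U}(0,a,c;z)$ and $\mathcal{V}(0,a,c;z)$ (with $s(a,c)$, $t(a,c)$ now referring to $a/c$ itself since $l=a$, $c_1=c$) are exactly the ones already absorbed into the definitions of $\mathcal{U}(a,b,c;z)$ and $\mathcal{V}(a,b,c;z)$ — so that no stray constant survives. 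Once that is checked, the corollary follows immediately, with the case split $c=4$ versus $c\not=4$ inherited verbatim from the case split $c_1 = 4$ versus $c_1 \not= 4$ in parts (6) and (5) of the theorem.
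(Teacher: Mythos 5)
Your proposal is correct and is exactly how the corollary is meant to follow from Theorem \ref{Otransfo}: one specializes cases (5) and (6) to $k=1$, $h=h'=0$ (so $k_1=1$, $c_1=c$, $l=a$, $\omega_{0,1}=1$, all phases trivial, and the $\nu$-sum collapses to $\nu=0$) with $w=i/z$, so that $q$ is the nome at $-1/z$ and $q_1$ the nome at $z$, and uses $(i/z)^{-1/2}=(-iz)^{1/2}$ on the principal branch. The detour through $w=-iz$ is harmless since you correct it, and the signs you worry about at the end ($(-1)^{c_1(l+k_1)}$, the $e^{\pi i a h'(\cdots)/c}$ phases) do not arise in cases (5)--(6) with $ah'=0$.
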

\begin{corollary}
\label{transcorollary2}
Assume that $\left(\begin{smallmatrix}\alpha&\beta\\ \gamma& \delta \end{smallmatrix} \right) \in \Gamma_0(c)$ with $c$ odd,  $z \in \H$, and let  $\gamma_1:= \frac{\gamma}{(c,\gamma)}$.
\begin{enumerate}
\item   If $2| \gamma$, then the holomorphic part of $\mathcal{O} \left( \frac{a}{c}; \frac{\alpha z + \beta}{\gamma z + \delta}  \right)$ is given by
\begin{eqnarray*}
-  i   \cdot e^{ \frac{2 \pi i a^2 \delta \gamma_1}{c}} \cdot
 \tan \left( \frac{\pi a}{c} \right) \cdot   \cot \left( \frac{\pi a \delta}{c}\right)   \frac{\omega_{\alpha,\gamma}^2}{ \omega_{\alpha, \gamma /2} } \,
 (-i(\gamma z+ \delta))^{\frac{1}{2}}  \cdot
 \mathcal{O} \left(\frac{a\delta}{c}; z \right).
\end{eqnarray*}
\item  If $\gamma$ is  odd,
 then the holomorphic part of $\mathcal{O} \left( \frac{a}{c}; \frac{\alpha z + \beta}{\gamma z + \delta}  \right)$ is given by
\begin{eqnarray*}
 \sqrt{2} i \cdot e^{-\frac{\pi i \delta}{8 \gamma}  + \frac{2 \pi i a^2 \delta \gamma_1}{c}}
 \tan \left( \frac{\pi a}{c} \right)  \frac{\omega_{\alpha,\gamma}^2}{ \omega_{2\alpha,\gamma} } \cdot
 (-i(\gamma z + \delta))^{\frac{1}{2}}    \cdot
 \mathcal{U} \left(-\frac{a\delta}{c};z  \right).
 \end{eqnarray*}
\end{enumerate}
\end{corollary}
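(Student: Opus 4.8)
The plan is to obtain Corollary~\ref{transcorollary2} as a direct specialization of Theorem~\ref{Otransfo}, by recognizing an arbitrary element of $\Gamma_0(c)$ as an instance of the ``behaviour near the cusp $h/k$'' treated there. Let $A=\sm{\alpha}{\beta}{\gamma}{\delta}\in\Gamma_0(c)$ with $c$ odd; we assume $\gamma>0$ (the right-hand side of the corollary is written for this case anyway, and $\gamma=0$ is the trivial translation case since $\mathcal{O}(\frac ac;q)$ is a power series in $q$ with integral exponents). From $\alpha\delta-\beta\gamma=1$ we get $(\alpha,\gamma)=1$ and $\alpha(-\delta)\equiv-1\pmod\gamma$, so $h:=\alpha$, $k:=\gamma$, $h':=-\delta$ is admissible data for Theorem~\ref{Otransfo}. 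The identity $\frac{\alpha z+\beta}{\gamma z+\delta}=\frac{\alpha}{\gamma}-\frac{1}{\gamma(\gamma z+\delta)}$ shows that with $w:=\frac{i}{\gamma z+\delta}$ one has $\frac1k(h+iw)=\frac{\alpha z+\beta}{\gamma z+\delta}$, while $\re(w)=\frac{\gamma\,\im(z)}{|\gamma z+\delta|^2}>0$; moreover $\frac iw=\gamma z+\delta$, so $h'+\frac iw=\gamma z$ and $q_1=e^{\frac{2\pi i}{\gamma}\gamma z}=e^{2\pi iz}$. Thus Theorem~\ref{Otransfo}, read with these values, expresses $\mathcal{O}\bigl(\frac ac;e^{2\pi i(\alpha z+\beta)/(\gamma z+\delta)}\bigr)$ in terms of $\mathcal{O}$ (resp.\ $\mathcal{U}$) evaluated at $q_1=e^{2\pi iz}$.

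Since $A\in\Gamma_0(c)$ we have $c\mid\gamma=k$, so Theorem~\ref{Otransfo} applies through its cases (1) and (2) according as $\gamma$ is even or odd, and $(c,k)=c$ gives $k_1=\gamma/c=\gamma_1$, matching the corollary. Substituting $h=\alpha$, $k=\gamma$, $h'=-\delta$, $w=\frac i{\gamma z+\delta}$, $q_1=e^{2\pi iz}$, the leading term is, up to the stated explicit constant, $w^{-1/2}\,\mathcal{O}(\frac{ah'}{c};q_1)$ in case (1) and $w^{-1/2}\,\mathcal{U}(\frac{ah'}{c};q_1)$ in case (2); this is holomorphic in $z$, since $w^{-1/2}=(-i(\gamma z+\delta))^{1/2}$ on the principal branch --- this is where $\re(w)>0$ is used, together with the fact that $\gamma>0$ puts $-i(\gamma z+\delta)$ into the open right half-plane --- and $\mathcal{O}(\cdot;q_1)$, $\mathcal{U}(\cdot;q_1)$ are holomorphic in $q_1=e^{2\pi iz}$. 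The constant now simplifies by bare substitution: $e^{-2\pi ia^2h'k_1/c}=e^{2\pi ia^2\delta\gamma_1/c}$; $\cot(\pi ah'/c)=-\cot(\pi a\delta/c)$; $\mathcal{O}(\frac{ah'}{c};z)=\mathcal{O}(-\frac{a\delta}{c};z)=\mathcal{O}(\frac{a\delta}{c};z)$, the last equality because $\mathcal{O}(u;q)=\mathcal{O}(u^{-1};q)$ is visible from (\ref{overrankgen}); $\mathcal{U}(\frac{ah'}{c};z)=\mathcal{U}(-\frac{a\delta}{c};z)$; and $\omega_{h,k}^2/\omega_{h,k/2}$, $\omega_{h,k}^2/\omega_{2h,k}$ become $\omega_{\alpha,\gamma}^2/\omega_{\alpha,\gamma/2}$, $\omega_{\alpha,\gamma}^2/\omega_{2\alpha,\gamma}$. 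Finally, because $c$ is odd, $\gamma$ even forces $\gamma_1=\gamma/c$ even, so $(-1)^{k_1}=1$ in case (1) and $(-1)^{k_1}i\cdot\bigl(-\cot(\pi a\delta/c)\bigr)=-i\cot(\pi a\delta/c)$, which is exactly the constant in the corollary; case (2) reproduces the stated formula verbatim.

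It remains to check that the second term of Theorem~\ref{Otransfo}, namely the weighted sum of Mordell integrals $\sum_{\nu\pmod k}(\cdots)\,I_{a,c,k,\nu}(w)$ evaluated at $w=\frac i{\gamma z+\delta}$, is precisely the non-holomorphic part, so that the leading term genuinely is the \emph{holomorphic part} of $\mathcal{O}\bigl(\frac ac;\frac{\alpha z+\beta}{\gamma z+\delta}\bigr)$. For this I would invoke the analysis of Section~\ref{SectionMaass}: there these Mordell-type integrals are rewritten as a period (Eichler) integral of the weight-$\tfrac32$ cuspidal theta function $\Theta_{a,c}$, of the same shape as $J(\tfrac ac;z)$, which is real-analytic but carries no holomorphic $q$-expansion contribution. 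Since the decomposition of such a function of moderate growth on $\H$ into a $q$-series plus a period integral of this type is unique, the holomorphic part of $\mathcal{O}\bigl(\frac ac;\frac{\alpha z+\beta}{\gamma z+\delta}\bigr)$ is exactly the leading term computed above, as claimed.

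The step I expect to be the main obstacle is the precise bookkeeping of the square-root branch and of the Dedekind-sum multiplier $\omega_{h,k}$ under the substitution $w=\frac i{\gamma z+\delta}$ --- that is, verifying $w^{-1/2}=(-i(\gamma z+\delta))^{1/2}$ on the principal branch for all $z\in\H$ with $\gamma>0$ and that the $\omega$-ratios transport verbatim --- together with the identification, borrowed from the proof of Theorem~\ref{maassform}, of the Mordell-integral remainder with the period integral $J(\tfrac ac;z)$. Once these are in hand, the corollary follows by direct substitution into Theorem~\ref{Otransfo}.
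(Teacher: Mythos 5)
Your proposal is correct and follows exactly the route the paper intends (the paper states this corollary without proof, as an immediate specialization of Theorem \ref{Otransfo} with $h=\alpha$, $k=\gamma$, $h'=-\delta$, $w=\tfrac{i}{\gamma z+\delta}$, so that $q_1=e^{2\pi iz}$ and cases (1) and (2) of the theorem apply since $c\mid\gamma$). Your bookkeeping — $k_1=\gamma_1$, $(-1)^{k_1}=1$ because $c$ is odd and $\gamma$ even, $\cot(\pi a h'/c)=-\cot(\pi a\delta/c)$, the evenness $\mathcal{O}(u;q)=\mathcal{O}(u^{-1};q)$, and the branch check $w^{-1/2}=(-i(\gamma z+\delta))^{1/2}$ — all reproduces the stated constants, and your identification of the Mordell-integral remainder with the non-holomorphic part via Section \ref{SectionMaass} is the same mechanism the paper uses in proving Theorem \ref{maassform}.
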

\begin{proof}[Proof of Theorem \ref{Otransfo}]
We proceed similarly as in \cite{An2,B1}.
First we  rewrite (\ref{overrankgen}) as
\begin{equation} \label{Ofctrewritten} \qquad
\mathcal{O}\left (\frac{a}{c};q\right)
=
4 \sin^2 \left(\frac{\pi a}{c} \right)
\frac{\eta\left(     \frac{2}{k} (h+iw)\right)}{\eta^2\left( \frac{1}{k} (h+iw)\right)}  \sum_{n  \in \Z } (-1)^n \,
e^{\frac{2 \pi i n^2 (h+iw)}{k}}
\cdot
 H_{a,c} \left(
 \frac{2\pi i n}{k} (h+iw)
 \right)
  .
\end{equation}
Now let
\begin{eqnarray} \label{tildeO}
\widetilde{\mathcal{O}} \left( \frac{a}{c};q \right) := \frac{\eta^2  \left( \frac{1}{k} (h+iw)\right)}{4 \sin^2 \left( \frac{\pi a}{c} \right) \cdot    \eta\left( \frac{2}{k} (h+iw)\right)}
\cdot \mathcal{O} \left(\frac{a}{c};q \right).
\end{eqnarray}
Writing $n=km+\nu$ with $0 \leq \nu <k$, $m \in \Z$  gives that
$\widetilde{\mathcal{O}} \left( \frac{a}{c};q \right)$ equals
\begin{equation}\label{formel2}
\begin{split}
 \sum_{\nu=0}^{k-1} (-1)^{\nu} \, e^{\frac{2 \pi i h \nu^2}{k}}
\sum_{m \in \Z}  (-1)^{km} \ H_{a,c} \left(\frac{2 \pi i h \nu}{k} -\frac{2 \pi w}{k} (km+ \nu)\right) \cdot
e^{ -  \frac{2 \pi w}{k}(km+\nu)^2}.
\end{split}
\end{equation}
Using Poisson summation and substituting $x \mapsto kx+ \nu$ gives that the inner sum equals
\begin{eqnarray} \label{formel3a}
\frac{1}{k} \sum_{n \in \Z} \int_{\R}
 H_{a,c} \left(\frac{2 \pi i h\nu}{k} -\frac{2\pi x w}{k}\right)\cdot
 e^{  \frac{\pi i}{k}(2n+\widetilde k )(x- \nu)-  \frac{2 \pi w  x ^2}{k}}
 dx.
\end{eqnarray}
Strictly speaking for $c|k$ there may lie a pole at $x=0$. In this case we take the principal part of the integral.
Inserting  (\ref{formel3a})  into (\ref{formel2}) we  see that the summation only depends  on $\nu \pmod k$.
Moreover, by changing $\nu$ into $-\nu$,  $x$ into $-x$, and $n$ into $-(n+\widetilde k)$, we see that
the part of the sum over $n$ with $n \leq -1$ equals the part of the  sum with $n \geq 0$. Thus (\ref{formel2}) equals
\begin{equation}\label{formel3}
\frac{2   }{k}
 \sum_{\nu \pmod k}
 (-1)^{\nu} e^{\frac{2 \pi i h \nu^2}{k}}
 \sum_{n \in \N} \int_{\R}
 H_{a,c} \left(\frac{2 \pi i h\nu}{k} -\frac{2\pi x w}{k}\right)\cdot
 e^{\frac{   \pi i}{k}(2n+\widetilde k)(x- \nu)-   \frac{2 \pi w x ^2}{k}}
  dx.
\end{equation}
Next we introduce the function
\begin{eqnarray*}
S_{a,c,k}( x):=
\frac{\sinh(c_1x)}{\sinh\left(\frac{x}{k} + \frac{\pi i a}{c}\right)  \cdot     \sinh\left(\frac{x}{k} - \frac{\pi i a}{c}\right)}
\end{eqnarray*}
which is   entire  as a function of $x$. Here we need that $c \not =2$.
We rewrite  the integrand in (\ref{formel3})  as
\begin{eqnarray*}
  \frac{(-1)^{hc_1 \nu}
 e^{ \frac{  \pi i (2n+\widetilde k) (x- \nu)}{k}-  \frac{2 \pi w x ^2}{k}     }
 \cdot  S_{a,c,k}\left( \pi x w - \pi i h \nu\right)}{  4
 \sinh (\pi c_1x w)
  }.
  \end{eqnarray*}
  From this we see that the only  poles can lie in the points
$$
x_m:=\frac{im}{c_1 w} \qquad (m \in \Z).
$$
If $c |k$, then $c_1=1$; thus poles can only lie in points of the form
$
x_m=\frac{im}{w}.
$ One can easily compute that  each choice $\pm $ leads at most for one $\nu \pmod k$ to a non-zero residue, and that this $\nu$ can  be chosen as
\begin{eqnarray*}
\nu_m^{\pm} :=- h' (m \mp ak_1).
\end{eqnarray*}
If $c \nmid k$, then we can only have a nontrivial residue if
$
m \equiv \pm a k_1 \pmod{c_1}
$.
We write
$
 c_1 m \pm l
$
instead of $m$
with $m \geq \frac12 (1 \mp 1)$. We  see that to each  choice $+$ or $-$ there corresponds exactly one $\nu \pmod k$ and we can choose $\nu$ as
\begin{eqnarray*}
\nu_m^{\pm} :=   -h' \left( m \pm \frac{1}{c_1}\left(l-a k_1\right)\right).
\end{eqnarray*}
Now  shift the path of integration through the points
\begin{eqnarray*}
\omega_n:=\frac{\left(2n+\widetilde k\right)i}{4w}.
\end{eqnarray*}
Which points $x_m$ $(m \geq 0)$  we have to take into account when we use  the Residue Theorem depends on whether $c|k$ or not and on whether $k$ is even or odd.  The cases that $c_1=2,4$ require special care.
\begin{itemize}
\item
If $c|k$ and $k$ is even, then we have to take those $x_m$ into account  for which  $2m \leq n$. The poles on the   path of integration are  $x_0$ and $\omega_n$.
\item
If $c|k$  and $k$ is  odd, then we have to take those $x_m$ into account  for which  $2m \leq n$.
The point $x_0$  is the only pole on the path of integration
.
\item
 If $c \nmid k$, $k$ is even, and $c_1 \not=2$, then
 there is no pole on the path of integration. Moreover in this case
 we have to take those $x_m$ into account  for which  $n \geq 2m + \frac{1}{2} (1 \pm t)$.
 \item
If $c \nmid k$, $k$ is even, and $c_1=2$, then the only   pole on the path of integartion lies in $\omega_n$.
We have to take those $x_m$ into account for which $ n \geq 2m \pm 1$.
\item
If $c \nmid k$, $k$ is odd, and $c_1 \not = 4$, then there is no pole on the path of integration. Moreover we have to take those  $x_m$ into account for which
$ n \geq 2m \pm s$.
\item
If $c \nmid k$, $k$ is odd, and
 $c_1 =4$, then there lies a pole in $\omega_n$ and we have to take those $x_m$ into account for which $n \geq 2m \pm s + \frac{1}{2} (-1 \pm 1)$.
 \end{itemize}
 In the following we denote the residues of the integrand by $\lambda_{n,m}^{\pm}$.  It is not hard to compute
 \begin{eqnarray*}
\lambda_{n,m}^{\pm}=    \pm
\frac{i k
 }{ 4 \pi w \sin \left(  \frac{2 \pi a}{c} \right)}   \,
e^{\frac{ \pi i}{k} (2n+\widetilde k)\left(x_m- \nu_m^{\pm}\right) - \frac{2 \pi w x_m^2}{k}}.
\end{eqnarray*}
>From this one directly sees that
\begin{eqnarray*}
\lambda_{n+1,m}^{\pm}
=\exp \left( \frac{2 \pi i}{k}\left(x_m - \nu_{m}^{\pm} \right)\right) \cdot
\lambda_{n,m}^{\pm}.
\end{eqnarray*}
Shifting the path of integration through $\omega_n$, we   obtain by the Residue Theorem
\begin{eqnarray*}
 \widetilde O  \left( \frac{a}{c};q\right)=
\sum_{1}+\sum_{2},
\end{eqnarray*}
where
\begin{eqnarray*}
 \sum_2:=
 \frac{2  }{k}
  \sum_{\nu \pmod k}(-1)^{\nu} \ e^{\frac{2 \pi i h \nu^2}{k}}
 \sum_{n \in \N} \int_{-\infty+\omega_n}^{\infty+\omega_n}
 H_{a,c} \left(\frac{2 \pi i h\nu}{k} -\frac{2 \pi x w}{k}\right)\cdot
 e^{\frac{\pi i (2n+\widetilde k )(x- \nu)}{k}-   \frac{2 \pi x ^2w}{k}} \
  dx.
\end{eqnarray*}
For the definition of $\sum_{1}$   we have to distinguish several  cases.   We set $r_0:=\frac{1}{2}$ and $r_m:=1$ for $m \in \N$.
If $c|k$ and $k$ is even, then
$$
\sum_{1}:=
\frac{4 \pi i  }{k}
  \sum_{m\geq 0 \atop \epsilon \in \{\pm \}}
  r_m   (-1)^{\nu_m^{\epsilon}}\ e^{\frac{2 \pi i h(\nu_m^{\epsilon})^2}{k}}
  \left(
  \frac{\lambda_{2m+1,m}^{\epsilon}}{1 - \exp \left(\frac{2 \pi i }{k} \left(x_m- \nu_m^{\epsilon} \right) \right)}
  + \frac{1}{2}  \lambda_{2m,m}^{\epsilon}
  \right).
   $$
   An easy calculation shows that this equals
   \begin{eqnarray} \label{1sum1}
  (-1)^{k_1} i
  e^{- \frac{2 \pi i h' a^2 k_1}{c}}  \frac{\sin \left( \frac{2 \pi h' a}{c} \right)}{w \cdot \sin \left(\frac{2 \pi a}{c}  \right)}
  \sum_{n  \in \Z }
\frac{(-1)^n q_1^{n^2+n}}
{1 - 2 q_1^n \cos  \left(\frac{2 \pi a h'}{c} \right)+q_1^{2n}} .
   \end{eqnarray}
   If $c|k$  and  $2 \nmid k$, then
   \begin{eqnarray} \label{resodd}
\sum_{1}:=
\frac{4 \pi i  }{k}
  \sum_{m\geq 0 \atop \epsilon \in \{\pm \}}
  r_m   (-1)^{\nu_m^{\epsilon}}\ e^{\frac{2 \pi i h\left(\nu_m^{\epsilon}\right)^2}{k}}
  \frac{\lambda_{2m,m}^{\epsilon}}{1 - \exp \left(\frac{2 \pi i }{k} \left(x_m- \nu_m^{\epsilon} \right) \right)}
   .
   \end{eqnarray}
   We assume without loss of generality that $h'$ is even. Then we can show that  (\ref{resodd}) equals
       \begin{eqnarray} \label{2sum1}
\frac{i \sin \left( \frac{ \pi h' a}{c} \right)}{w \cdot \sin \left(\frac{2 \pi a }{c}  \right)}\,    e^{- \frac{2 \pi i h' a^2  k_1}{c}}
  \sum_{n  \in \Z }
\frac{\left( 1+ q_1^n  \right) q_1^{n^2+\frac{n}{2}}}
{1 - 2 q_1^n \cos  \left(\frac{2 \pi a h'}{c} \right)+q_1^{2n}}.
   \end{eqnarray}
   If $c \nmid k$, $2 |k$, and $c_1 \not=2$, then
   \begin{eqnarray*}
   \sum_{1}:=
\frac{4 \pi i  }{k}
  \sum_{m\geq 0 \atop \epsilon \in \{\pm \}}
   (-1)^{\nu_m^{\epsilon}}\ e^{\frac{2 \pi i h(\nu_m^{\epsilon})^2}{k}}
  \frac{\lambda_{2m+\frac{1}{2} (1\pm t),m}^{\epsilon}}{1 - \exp \left(\frac{2 \pi i }{k} \left(x_m- \nu_m^{\epsilon} \right) \right)} .
  \end{eqnarray*}
  It can be calculated that this equals
  \begin{eqnarray*} \label{3sum1}
   -  \frac{1}{w \sin \left( \frac{2 \pi a}{c} \right)} \,
q_1^{\frac{t l}{2 c_1}  - \frac{l^2}{c_1^2}+\frac{l}{2c_1}} \cdot
e^{\frac{\pi i h'a}{c} \left(\frac{4l}{c_1} -\frac{2ak_1}{c_1}- t -1\right)}
(-1)^{c_1(l+k_1)}
\sum _{m\in \Z}  (-1)^m
\frac{q_1^{\frac{m}{2}(2m+1) + \frac{m t}{2}}}{1- e^{-\frac{2 \pi i a h'}{c}} \cdot q_1^{m+\frac{l}{c_1}}} .
\end{eqnarray*}
If $c \nmid k$, $2|k$, and $c_1=2$, then
\begin{multline*}
\sum_{1}:=
\frac{4 \pi i  }{k}\left(
  \sum_{m\geq 0}
    (-1)^{\nu_m^{+}}\ e^{\frac{2 \pi i h(\nu_m^{+})^2}{k}}
  \left(
  \frac{\lambda_{2m+2,m}^{+}}{1 - \exp \left(\frac{2 \pi i }{k} \left(x_m- \nu_m^{+} \right) \right)}
  + \frac{1}{2}  \lambda_{2m+1,m}^{+}
  \right)
  \right.
  \\
  \left.
  +
   \sum_{m\geq 1}
    (-1)^{\nu_m^{-}}\ e^{\frac{2 \pi i h(\nu_m^{-})^2}{k}}
  \left(
  \frac{\lambda_{2m,m}^{-}}{1 - \exp \left(\frac{2 \pi i }{k} \left(x_m- \nu_m^{-} \right) \right)}
  + \frac{1}{2}  \lambda_{2m-1,m}^{-}
  \right)
  \right)
  .
   \end{multline*}
   One can show that this equals.
   \begin{eqnarray} \label{4sum1}
   -  \frac{q_1^{\frac{1}{4}}}{2w \sin \left( \frac{2 \pi a}{c} \right)} \,  e^{-\frac{\pi i a^2h'k_1}{c}}
\sum _{m\in \Z}
\frac{  q_1^{m^2+ m     }    \left( 1+ e^{- \frac{2 \pi i  h'a}{c}}   q_1^{m+\frac{1}{2}}\right) }{1- e^{-\frac{2 \pi i a h'}{c}} \cdot q_1^{m+\frac{1}{2}}}   .
\end{eqnarray}
  If $c \nmid k$, $k$ is odd, and $c_1 \not=4$, then
   \begin{multline*}
   \sum_{1}:=
\frac{4 \pi i  }{k}  \left(
  \sum_{m\geq 0}
   (-1)^{\nu_m^{+}}\ e^{\frac{2 \pi i h(\nu_m^{+})^2}{k}}
  \frac{\lambda_{2m+s,m}^{+}}{1 - \exp \left(\frac{2 \pi i }{k} \left(x_m- \nu_m^{+} \right) \right)}
  \right.  \\
  \left.
  +
   \sum_{m\geq 1}
   (-1)^{\nu_m^{-}}\ e^{\frac{2 \pi i h(\nu_m^{-})^2}{k}}
  \frac{\lambda_{2m-s,m}^{-}}{1 - \exp \left(\frac{2 \pi i }{k} \left(x_m- \nu_m^{-} \right) \right)}  \right)
  .
  \end{multline*}
  Without loss of generality we may assume that  $h'$ is even. With this assumption we can compute that $\sum_1$ equals
    \begin{eqnarray*} \label{5sum1}
   -  \frac{1}{w \sin \left( \frac{2 \pi a}{c} \right)}  \,
q_1^{\frac{s l}{c_1}  - \frac{l^2}{c_1^2}+\frac{l}{2c_1}}  \cdot
e^{\frac{ \pi i h'a}{c} \left(\frac{4l}{c_1} -\frac{2ak_1}{c_1}-2s-1\right)}
\sum _{m \in \Z}
\frac{q_1^{\frac{m}{2}(2m+1) + ms}}{1- e^{-\frac{2 \pi i a h'}{c}} \cdot q_1^{m+\frac{l}{c_1}}}.
\end{eqnarray*}
  If $c \nmid k$, $k$ is odd, and $c_1 =4$, then
   \begin{multline*}
   \sum_{1}:=
\frac{4 \pi i  }{k}
  \left(
  \sum_{m=0} ^{\infty}
   (-1)^{\nu_m^{+}}\ e^{\frac{2 \pi i h(\nu_m^{+})^2}{k}}
   \left(
  \frac{\lambda_{2m+s+1,m}^{+}}{1 - \exp \left(\frac{2 \pi i }{k} \left(x_m- \nu_m^{+} \right) \right)}
  + \frac{1}{2}  \lambda_{2m+s,m}  \right)
  \right.  \\
  \left.
  +
   \sum_{m = 1}^{\infty}
   (-1)^{\nu_m^{-}}\ e^{\frac{2 \pi i h(\nu_m^{-})^2}{k}}   \left(
  \frac{\lambda_{2m-s,m}^{-}}{1 - \exp \left(\frac{2 \pi i }{k} \left(x_m- \nu_m^{-} \right) \right)}
    + \frac{1}{2}  \lambda_{2m-s-1,m} \right)
  \right)
  .
  \end{multline*}
    One can show that this equals
      \begin{equation}  \label{6sum1}
   -  \frac{1}{2 w \sin \left( \frac{2 \pi a}{c} \right)}
q_1^{\frac{s l}{c_1}  - \frac{l^2}{c_1^2}+\frac{l}{2c_1}}
e^{\frac{\pi i h'a}{c} \left(\frac{4l}{c_1} -\frac{ak_1}{2c_1}-2s \right)}
\sum _{m \in \Z}
\frac{ q_1^{\frac{m}{2}(2m+1) + ms  }
 \left(
1+e^{-\frac{2 \pi i a h'}{c}}  q_1^{m+\frac{l}{c_1}} \right)
}{1- e^{-\frac{2 \pi i a h'}{c}} \cdot q_1^{m+\frac{l}{c_1}}}.
\end{equation}
We next turn to the computation of $\sum_2$.
If there is   a pole in $\omega_n$ we take  the principal part of the integral.
With the same argument as before we can change the sum over $\N$ into a sum over $\Z$. Moreover we make   the translation
$x \mapsto x+ \omega_n$ and write $n=2 p + \delta$ with $ p \in \Z$ and $ \delta \in \{ 0, - 1\}$.
This gives
\begin{multline*}
 \sum_2 =
 \frac{ 1 }{k}
   \sum_{\nu \pmod k}
  (-1)^{\nu} \ e^{\frac{2\pi i h \nu^2}{k} }
 \sum_{\substack{p \in \Z   \\ \delta \in \{ 0, -1\}  }}
 e^{-\frac{\pi i}{k}  (4p+ 2\delta+ \widetilde k) \nu  -\frac{\pi }{8kw} (4p + 2 \delta + \widetilde k)^2   }
  \\
   \int_{-\infty}^{\infty}
 H_{a,c} \left(\frac{2\pi i h \nu}{k}      -\frac{2\pi wx}{k} - \frac{\pi i}{2k}  \left(4p+ 2\delta + \widetilde k\right) \right)
  \cdot     e^{-\frac{2 \pi w x^2}{k}} \ dx.
\end{multline*}
Next  we change $\nu$ into $- h'( \nu+p)$  and   distinguish whether $k$ is even or odd.

 If $k$ is even, then  we have, since $h'$ is odd,
\begin{multline*}
 \sum_2 =
 \frac{ 1 }{k}
   \sum_{\substack{\nu \pmod k\\ \delta \in \{0,-1\}}}
  (-1)^{\nu} \ e^{\frac{\pi i h'}{2k}\left( -\delta^2  + 4 \nu (\delta- \nu ) \right)}
 \sum_{p \in \Z}  (-1)^p    \,
 q_1^{\frac{(4p+2 \delta)^2}{16}} \\
   \int_{-\infty}^{\infty}
 H_{a,c} \left(\frac{2\pi i \nu}{k}      -\frac{2\pi wx}{k} - \frac{\pi i \delta}{k} \right)
  \cdot     e^{-\frac{2 \pi w x^2}{k}} \ dx.
\end{multline*}
Now the integral is independent of $p$. Moreover the sum over $p$ vanishes for $\delta=-1$ since the $p$th and the $(-p+1)$th term cancel.   For $\delta=0$  the sum over $p$ equals
\begin{eqnarray}\label{sump}
\sum_{p \in \Z}
(-1)^p \cdot q_1^{p^2}
= \frac{\eta^2 \left(\frac{1}{k} \left(h' + \frac{i}{w} \right)  \right)}{  \eta \left(\frac{2}{k} \left(h' + \frac{i}{w} \right)  \right)}
.
\end{eqnarray}
Thus
\begin{eqnarray}\label{inteven}
\sum_2=
 \frac{\eta^2 \left(\frac{1}{k} \left(h' + \frac{i}{w} \right)  \right)}{ k \cdot  \eta \left(\frac{2}{k} \left(h' + \frac{i}{w} \right)  \right)}
\sum_{\nu \pmod k}
(-1)^{\nu} \,
e^{-\frac{2\pi i h' \nu^2}{k}}
 \cdot
I_{a,c,k,\nu}(w).
\end{eqnarray}
 If $k$ is odd, then  we may without loss of generality assume that   $h'$ is even.
 In this case we obtain
 \begin{multline*}
 \sum_2 =
 \frac{ 1 }{k}  e^{-  \frac{\pi i h'}{8k} }
  \sum_{\substack{\nu \pmod k\\ \delta \in \{0,-1\}}}
  e^{\frac{\pi i h'}{k}\left( - 2 \nu^2 +  \nu (2 \delta+1)  \right)}
 \sum_{p \in \Z}
 q_1^{\frac{(4p+2 \delta+1 )^2}{16}} \\
   \int_{-\infty}^{\infty}
 H_{a,c} \left(\frac{2\pi i \nu}{k}      -\frac{2\pi wx}{k} - \frac{\pi i (2\delta+1)}{2k} \right)
  \cdot     e^{-\frac{2 \pi w x^2}{k}} \ dx.
\end{multline*}
Making the substitutions $p \mapsto -p$, $x \mapsto -x$, and $\nu \mapsto - \nu$,  one can easily see that the contribution for $\delta=0$ and for $\delta=-1$ coincide.
Moreover the sum over $p$ equals
\begin{eqnarray}\label{sump1}
\sum_{p \in \Z}  q_1^{\frac{(4p+1)^2}{16}}
=
\frac{\eta^2 \left(\frac{1}{k} \left(h' + \frac{i}{w} \right)  \right)}{ \eta \left(\frac{1}{2k} \left(h' + \frac{i}{w} \right)  \right)}
.
\end{eqnarray}
Thus
\begin{eqnarray}\label{intodd}
\sum_2=
\frac{2}{k}   \cdot e^{-\frac{\pi i h'}{8k}}\,
 \frac{\eta^2 \left(\frac{1}{k} \left(h' + \frac{i}{w} \right)  \right)}{  \eta \left(\frac{1}{2k} \left(h' + \frac{i}{w} \right)  \right)}\,
\sum_{\nu \pmod k}
e^{\frac{ \pi i h' }{k}\left( -2 \nu^2 +\nu \right)} \cdot
I_{a,c,k,\nu}(w).
\end{eqnarray}
To finish the proof of
Theorem \ref{Otransfo}, we require  the well-known transformation law of Dedekind's $\eta$-function.
\begin{eqnarray} \label{etatrans}
\eta \left( \frac{1}{k}(h+iw)\right)
= e^{ \frac{\pi i }{12k} (h-h') } \cdot
\omega_{h,k}^{-1} \cdot w^{-\frac{1}{2}}  \cdot
\eta \left( \frac{1}{k}\left(h'+\frac{i}{w}\right)\right).
\end{eqnarray}
This implies that for $k$ even, we have
\begin{eqnarray} \label{etaeven}
\eta \left( \frac{2}{k}(h+iw)\right)
= e^{ \frac{\pi i }{6k} (h-\widetilde h) } \cdot
\omega_{h,k/2}^{-1} \cdot w^{-\frac{1}{2}}  \cdot
\eta \left( \frac{2}{k}\left(\widetilde h+\frac{i}{w}\right)\right),
\end{eqnarray}
where $h \widetilde h \equiv -1 \pmod {k/2}$.
Moreover if $k$ is odd, (\ref{etatrans}) implies that
\begin{eqnarray} \label{etaodd}
\eta \left( \frac{2}{k}(h+iw)\right)
= e^{ \frac{\pi i }{12k} (2h-(2h)') } \cdot
\omega_{2h,k}^{-1} \cdot (2w)^{-\frac{1}{2}}  \cdot
\eta \left( \frac{1}{k}\left((2h)'+\frac{i}{2w}\right)\right).
\end{eqnarray}
Combining (\ref{1sum1}), (\ref{2sum1}), (\ref{3sum1}),  (\ref{6sum1}),
(\ref{inteven}), (\ref{intodd}),  (\ref{etatrans}), (\ref{etaeven}), and (\ref{etaodd}) gives (after a lengthy
but straightforward calculation) the theorem.
  \end{proof}
  \section{Construction of the weak Maass forms} \label{SectionMaass}
  In this section we prove the first part of Theorem \ref{maassform}.
  First we interpret the Mordell type integral occurring in Corollary \ref{transcorollary}  as an integral of theta functions.
  For this let
  \begin{eqnarray} \label{thetaintegral}
  I_{z}:=  \frac{4\sqrt{2}\sin^2 \left(\frac{\pi a}{c} \right) }{(-i z)}
  \int_{\R} e^{-\frac{2 \pi i x^2}{z}} \cdot
  H_{a,c} \left(
  \frac{2 \pi i x}{z}   + \frac{ \pi i}{2}
  \right) dx.
  \end{eqnarray}
  \begin{lemma} \label{thetalemma}
  We have
  \begin{eqnarray*}
  I_{z} =  \frac{\pi  \tan \left( \frac{\pi a}{c}\right)}{4c}
  \int_{0}^{\infty}
  \frac{\Theta_{a,c}\left( iu  \right)}{\sqrt{-i(iu+z)}} \,du.
  \end{eqnarray*}
  \end{lemma}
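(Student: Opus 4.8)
The plan is to reduce both sides of the asserted identity to one and the same elementary Gaussian integral over $\R$; this is the ``completion'' step that ultimately exhibits the Mordell-type integral sitting inside $I_z$ as a period integral of a weight-$\tfrac32$ unary theta function.

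\emph{Reducing the right-hand side.} Inserting the definition of $\Theta_{a,c}$ — which in each of the three cases ($c$ odd, $2\parallel c$, $4\mid c$) is a fixed rational multiple of a theta series $\sum_{n\equiv\alpha\,(\beta)}n\,e^{-\pi\lambda u n^{2}}$, with $\beta\mid 2c$ and, as one checks case by case, $\frac{2\pi\alpha}{\beta}\equiv\pi+\frac{4\pi a}{c}\pmod{2\pi}$ — I would interchange this ($u$-absolutely convergent) sum with the integral, and then use the Gaussian representation $\bigl(-i(iu+z)\bigr)^{-1/2}=(u-iz)^{-1/2}=\pi^{-1/2}\int_{\R}e^{-(u-iz)t^{2}}\,dt$, legitimate since $\re(u-iz)=u+\im z>0$ (principal branch on both sides). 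Swapping the $t$- and $u$-integrations and carrying out the elementary inner integral $\int_{0}^{\infty}e^{-u(\pi\lambda n^{2}+t^{2})}\,du=(\pi\lambda n^{2}+t^{2})^{-1}$ turns the right-hand side into a constant times $\int_{\R}e^{izt^{2}}\bigl(\sum_{n\equiv\alpha\,(\beta)}\tfrac{n}{\pi\lambda n^{2}+t^{2}}\bigr)\,dt$. The inner sum I would evaluate in closed form by writing $\tfrac{n}{n^{2}+A^{2}}=\tfrac12\bigl(\tfrac{1}{n-iA}+\tfrac{1}{n+iA}\bigr)$ and invoking the classical expansion $\sum_{n\equiv\alpha\,(\beta)}\tfrac{1}{n+x}=\tfrac{\pi}{\beta}\cot\bigl(\tfrac{\pi(\alpha+x)}{\beta}\bigr)$ together with the identity $\cot(X-iY)+\cot(X+iY)=\tfrac{2\sin 2X}{\cosh 2Y-\cos 2X}$; because $\frac{2\pi\alpha}{\beta}\equiv\pi+\frac{4\pi a}{c}$ this collapses to a constant multiple of $\bigl(\cosh(2\sqrt{2\pi}\,|t|)+\cos\tfrac{4\pi a}{c}\bigr)^{-1}$, the $t$-argument coming out the same in all three cases once the case-dependent $\lambda$ is inserted. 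After the rescaling $t\mapsto t/\sqrt{2\pi}$ the right-hand side becomes a constant times $\int_{\R}\frac{e^{izt^{2}/(2\pi)}}{\cosh 2t+\cos(4\pi a/c)}\,dt$.

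\emph{Reducing the left-hand side.} In the integral defining $I_{z}$ I would substitute $x=\tfrac{z}{2\pi i}t$, so $\tfrac{2\pi i x}{z}\mapsto t$ and $e^{-2\pi i x^{2}/z}\mapsto e^{izt^{2}/(2\pi)}$, and use the elementary identity $H_{a,c}\bigl(y+\tfrac{\pi i}{2}\bigr)=\bigl(2(i\sinh y-\cos\tfrac{2\pi a}{c})\bigr)^{-1}$ (which follows from $H_{a,c}(y)=\bigl(2(\cosh y-\cos\tfrac{2\pi a}{c})\bigr)^{-1}$ and $\cosh(y+\tfrac{\pi i}{2})=i\sinh y$). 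This presents $I_{z}$ as a constant times $\int_{L}\frac{e^{izt^{2}/(2\pi)}}{i\sinh t-\cos(2\pi a/c)}\,dt$, where $L$ is the line $\{\,2\pi i x/z:x\in\R\,\}$ through the origin. Since the poles of $\bigl(i\sinh t-\cos(2\pi a/c)\bigr)^{-1}$ all lie on $i\R$ and $\im z>0$, Cauchy's theorem rotates $L$ onto $\R$ without crossing a pole (the circular arcs at infinity vanishing by Gaussian decay); on $\R$ the odd-in-$t$ part of the integrand integrates to zero, and the elementary identity $\cos^{2}(2\pi a/c)+\sinh^{2}t=\tfrac12\bigl(\cosh 2t+\cos(4\pi a/c)\bigr)$ reduces $I_{z}$ to exactly the Gaussian integral $\int_{\R}\frac{e^{izt^{2}/(2\pi)}}{\cosh 2t+\cos(4\pi a/c)}\,dt$ obtained above. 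Collecting the (elementary) constants accumulated on the two sides then yields the claimed identity.

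\emph{Where the work is.} The substantive point is the interchange of the $n$-summation with the $t$-integration after the Gaussian step: the resulting $n$-series converges only conditionally, so one must read it as a symmetric limit and identify it with the classical Mittag-Leffler expansion of the cotangent, whose convergence is standard. A secondary subtlety is the case $c=4$, i.e.\ $\cos(2\pi a/c)=0$: then $\bigl(i\sinh t-\cos(2\pi a/c)\bigr)^{-1}$ has a pole at the origin, which lies on the rotation contour, so $I_{z}$ — and equally its theta-integral form — must be understood as a principal value; with that convention (which is precisely why $c=4$ is singled out in Corollary~\ref{transcorollary}) the argument goes through verbatim. Everything else is routine bookkeeping across the three shapes of $\Theta_{a,c}$.
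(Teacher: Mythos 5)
Your argument is sound and takes a genuinely different route from the paper's. The paper works entirely on the left-hand side: it writes $I_{it}$ as a Gaussian integral against $H_{a,c}\bigl(2\pi x+\tfrac{\pi i}{2}\bigr)$, expands that kernel by Mittag--Leffler into simple poles at $i\bigl(m\mp\tfrac{a}{c}-\tfrac14\bigr)$, converts each term via the identity $\int_{\R}\frac{e^{-\pi t x^{2}}}{x-is}\,dx=\pi i s\int_{0}^{\infty}\frac{e^{-\pi u s^{2}}}{\sqrt{u+t}}\,du$, and resums directly into the theta integral. You instead meet in the middle: you unfold the theta integral with the Gaussian representation of $(u-iz)^{-1/2}$ and resum the resulting $n$-series by the cotangent expansion (the same Mittag--Leffler input, run backwards), while on the left you rotate the contour and use parity; both sides land on $\int_{\R}e^{izt^{2}/(2\pi)}\bigl(\cosh 2t+\cos\tfrac{4\pi a}{c}\bigr)^{-1}dt$. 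Your structural claims check out: in all three shapes of $\Theta_{a,c}$ one has $\tfrac{2\pi\alpha}{\beta}\equiv\pi+\tfrac{4\pi a}{c}$ and $\beta\sqrt{\pi\lambda}=\sqrt{\pi/2}$, so the $t$-argument is case-independent; the poles of $\bigl(i\sinh t-\cos\tfrac{2\pi a}{c}\bigr)^{-1}$ all lie on $i\R$ (this uses $c\neq 2$), so the rotation is legitimate; and the principal-value reading for $c=4$ is right (there both sides in fact vanish, since $\Theta_{a,4}\equiv 0$). Two caveats. The interchange you flag is slightly worse than conditional convergence in $n$: near $u=0$ the termwise majorant $\int_{0}^{\epsilon}\sum_{n}|n|e^{-\pi\lambda u n^{2}}(u+t)^{-1/2}\,du$ diverges, so one should truncate at $u=\epsilon$, interchange there, and pass to the limit using cuspidality of $\Theta_{a,c}$ at $0$ --- no worse than what the paper itself glosses over. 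Second, the constants you defer do not actually land on the stated formula: carried through, both your computation and the paper's own argument give $I_{z}=\frac{\tan(\pi a/c)}{4c}\int_{0}^{\infty}\Theta_{a,c}(iu)\,(-i(iu+z))^{-1/2}\,du$, without the factor $\pi$ (the paper loses a $1/\pi$ when multiplying $4\sqrt{2}\sin^{2}(\tfrac{\pi a}{c})$ by the Mittag--Leffler coefficient $\tfrac{i}{4\pi\sin(2\pi a/c)}$), so the discrepancy appears to be a normalization slip in the statement rather than a defect of your method --- but you should not assert that the bookkeeping ``yields the claimed identity'' without doing it.
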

  \begin{proof}
  We modify a proof  of \cite{BO2,Zw}.
  By analytic continuation it is enough to show the claim for $z=it$ with $t>0$.
  Making the change of variables $x \mapsto \frac{x}{t}$, we find that
   \begin{eqnarray}  \label{Itau}
  I_{it }=   4  \sqrt{2}\sin^2 \left(\frac{\pi a}{c} \right)
  \int_{\R} e^{-2 \pi t  x^2}  \cdot
  H_{a,c} \left(
  2 \pi  x   + \frac{ \pi i}{2}
  \right) dx.
  \end{eqnarray}
  We next rewrite  $H_{a,c} \left(
  2 \pi  x   + \frac{ \pi i}{2}
  \right)$ using the Mittag-Leffler theory of partial fraction decomposition. This easily gives  that
  \begin{equation} \label{ML}
  H_{a,c} \left(
  2 \pi x   + \frac{ \pi i}{2}
  \right)
  =
  \frac{i}{4 \pi \sin \left( \frac{2 \pi a}{c}\right)}
  \sum_{m \in \Z}
  \left(
  \frac{1}{x- i \left( m - \frac{a}{c} -\frac{1}{4}\right)}
  -      \frac{1}{x- i \left( m + \frac{a}{c} -\frac{1}{4}\right)}
  \right).
  \end{equation}
  We plug (\ref{ML}) back into (\ref{Itau}) and interchange summation and integration. For this we introduce the extra
  summands $ \frac{1}{ i \left( m - \frac{a}{c} - \frac{1}{4}\right)}$ and  $ \frac{1}{ i \left( m + \frac{a}{c} - \frac{1}{4}\right)}$ which enforce absolute convergence and cancel when we integrate.  This gives
  \begin{eqnarray*}
  I_{it}
  = \frac{i \tan\left( \frac{\pi a}{c} \right) }{\sqrt{2}}
  \sum_{m \in \Z}
  \int_{\R}
  e^{- 2 \pi t x^2}
   \left(
  \frac{1}{x- i \left( m - \frac{a}{c} -\frac{1}{4}\right)}
  -      \frac{1}{x- i \left( m + \frac{a}{c} -\frac{1}{4}\right)}
  \right) dx.
  \end{eqnarray*}
  Next use  that  for all  $s \in \R \setminus\{ 0\}$, we have the identity
 \begin{eqnarray*}
  \int_{- \infty}^{\infty}\frac{e^{- \pi t  x^2}}{x- is}\ dx
  = \pi i s \int_{0}^{\infty}\frac{e^{- \pi u  s^2}}{\sqrt{u+t}}\ du
 \end{eqnarray*}
 (this follows since both sides are solutions of
 $\left(- \frac{\partial}{\partial t}
 + \pi s^2\right) f(t) = \frac{\pi i s}{\sqrt{t}}\, f(t)$ and have the same limit
 $0$ as $ t \mapsto \infty$ and hence are equal).
 Again interchanging summation and integration and making the change of variables   $u \mapsto \frac{u}{2}$
 gives
 \begin{multline*}
  I_{it}
  = - \pi \tan \left( \frac{\pi a}{c} \right)
   \int_{0}^{\infty}   \frac{1}{\sqrt{u+t}}  \\
   \sum_{m \in \Z}
   \left(
   \left( m - \frac{a}{c}-\frac{1}{4}
   \right) \cdot e^{-  2 \pi u \left( m - \frac{a}{c} -\frac{1}{4}   \right)^2}
   -
     \left( m + \frac{a}{c}-\frac{1}{4}
   \right) \cdot e^{-  2 \pi u \left( m + \frac{a}{c} -\frac{1}{4}   \right)^2}
   \right) \, du.
     \end{multline*}
     From this the claim can be easily deduced.
\end{proof}
\begin{lemma}   \label{transformationnonhol}
 For $z \in \H$, we have
\begin{eqnarray*}
J\left(\frac{a}{c};z+1\right)&=&J\left( \frac{a}{c};z\right),\\
\frac{1}{\sqrt{-i z}} \cdot J\left(\frac{a}{c};  - \frac{1}{z} \right)&=&    I_z  +
\frac{
\pi i  \, \tan \left(\frac{\pi a}{c}  \right)}{4c}  \,
\int_{- \bar z}^{i \infty}
\frac{\Theta_{a,c} (\tau)}{\sqrt{-i(\tau+z)}} \, d \tau.
\end{eqnarray*}
\end{lemma}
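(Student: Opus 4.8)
The plan is to analyze the integral defining $J\left(\frac{a}{c};z\right)$ directly, exploiting the modular transformation properties of the theta function $\Theta_{a,c}$ and a contour rotation. Recall
$$
J\left(\frac{a}{c};z\right)=\frac{\pi i\cdot\tan\left(\frac{\pi a}{c}\right)}{4c}\int_{-\bar z}^{i\infty}\frac{(-i\tau)^{-3/2}\cdot\Theta_{a,c}\left(-\frac{1}{\tau}\right)}{\sqrt{-i(\tau+z)}}\,d\tau.
$$
The periodicity statement $J\left(\frac{a}{c};z+1\right)=J\left(\frac{a}{c};z\right)$ should follow by substituting $\tau\mapsto\tau-1$ in the integral for $J\left(\frac{a}{c};z+1\right)$: the lower endpoint $-\overline{z+1}=-\bar z-1$ becomes $-\bar z$, the denominator $\sqrt{-i(\tau+z)}$ is unchanged, and one must check that $(-i\tau)^{-3/2}\Theta_{a,c}(-1/\tau)$ is invariant under $\tau\mapsto\tau+1$. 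This last invariance is exactly a weight-$\tfrac32$ modularity statement for $\Theta_{a,c}$ under the relevant congruence subgroup, and for $c\neq2$ the shapes of the theta arguments in the three cases ($c$ odd, $2\parallel c$, $4\mid c$) have been chosen so that $\Theta_{a,c}(-1/\tau)$ transforms into itself (up to the automorphy factor $(-i\tau)^{3/2}$) under $\tau\mapsto\tau+1$; I would verify this by the classical transformation formula for $\theta(\alpha,\beta;\tau)$ in each case.

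For the second identity, I would start from $J\left(\frac{a}{c};-\frac1z\right)$ and make the substitution $\tau\mapsto-1/\tau$ inside the integral. Under $\tau\mapsto-1/\tau$ we have $-i\tau\mapsto -i/\tau=1/(-i\tau)$, so $(-i\tau)^{-3/2}\mapsto(-i\tau)^{3/2}$, while $d\tau\mapsto d\tau/\tau^2$ and $\Theta_{a,c}(-1/\tau)\mapsto\Theta_{a,c}(\tau)$; moreover the lower endpoint $-\overline{-1/z}=1/\bar z$ maps to $-\bar z$ and the endpoint $i\infty$ maps to $0$. Collecting these and using $\sqrt{-i(-1/\tau+(-1/z))}$, after pulling out the factor $\sqrt{-iz}$ and keeping track of the $\tau^{-3/2}$ pieces, the integral becomes (up to the prefactor $\sqrt{-iz}$) an integral of $\Theta_{a,c}(\tau)/\sqrt{-i(\tau+z)}$ from $-\bar z$ to $0$. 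Splitting the path $-\bar z\to i\infty$ as $-\bar z\to 0$ together with $0\to i\infty$, the first piece is precisely the second term on the right-hand side, and the remaining piece $0\to i\infty$ must be identified with $I_z$; this is exactly the content of Lemma~\ref{thetalemma}, which rewrites $I_z$ as $\frac{\pi\tan(\pi a/c)}{4c}\int_0^\infty\Theta_{a,c}(iu)/\sqrt{-i(iu+z)}\,du$.

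The main obstacle I anticipate is bookkeeping: tracking the branch of the square roots, the powers of $-i$, and the constant $\frac{\pi i\tan(\pi a/c)}{4c}$ through the substitution $\tau\mapsto-1/\tau$, so that the emergent $\int_0^\infty$ term matches Lemma~\ref{thetalemma} on the nose (including the passage from the variable $\tau$ on the imaginary axis to the real variable $u$ via $\tau=iu$), and verifying that the contour deformation is legitimate — i.e. that $\Theta_{a,c}(\tau)$ decays fast enough near the endpoints and that no poles are crossed, which holds because $\Theta_{a,c}$ is cuspidal of weight $\tfrac32$ and hence exponentially small at $i\infty$. The case-splitting over the three shapes of $\Theta_{a,c}$ does not affect the structure of the argument, only the explicit theta transformation invoked in each case, so I would carry out the computation once in the generic notation and then remark that the same manipulation applies verbatim in all three cases.
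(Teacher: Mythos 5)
Your proposal is correct and follows essentially the same route as the paper: periodicity is obtained from the invariance of $(-i\tau)^{-3/2}\,\Theta_{a,c}\left(-\frac{1}{\tau}\right)$ under $\tau\mapsto\tau+1$ (which the paper gets from Shimura's transformation formula writing this expression as a combination of the manifestly $1$-periodic series $\Theta(k,2c;4c\tau)$), and the second identity comes from exactly your substitution $\tau\mapsto-\frac{1}{\tau}$, after which the path becomes $[-\bar z,0]$ and splitting off the $[0,i\infty)$ piece identifies it with $I_z$ via Lemma \ref{thetalemma}. The paper likewise carries out the computation only for $c$ odd and remarks that the even cases are analogous.
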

\begin{proof}
We only show the lemma in the case   that $c$ is odd, the case $c$ even is shown similarly.
The first claim follows from the fact that
$
\Theta_{a,c} \left(-\frac{1}{\tau} \right)
$ is invariant under $\tau \mapsto \tau+1$. Indeed Shimura's work \cite{Sh} implies  that
\begin{equation}  \label{Shimura}
(- i 4c \tau)^{-\frac{3}{2}}    \cdot
\Theta_{a,c}  \left( - \frac{1}{\tau}  \right)
= - i  (2c)^{-\frac{1}{2}}  \sum_{k \pmod{2c}}
\exp \left(
\frac{2\pi i k(4a+c)}{2c}
\right)
\cdot
\Theta(k,2c;4 c \tau).
\end{equation}
To prove the second transformation law  we
directly compute
\begin{displaymath}
\frac{1}{\sqrt{-i  z}} \cdot    J_{a,c} \left(
-\frac{1}{z}\right)\\
=  -
\frac{\pi i \tan \left(\frac{\pi a}{c}  \right)}{4c    \cdot   \sqrt{-i  z}}
\int_{\frac{1}{\bar z}}^{i \infty}
\frac{(-i  \tau)^{-\frac{3}{2}}\cdot  \Theta_{a,c}\left( -
\frac{1}{ \tau}\right)} {\sqrt{-i\left(\tau -
\frac{1}{ z}\right)}} \ d\tau.
\end{displaymath}
Making the change of variable $\tau \mapsto - \frac{1}{
\tau}$  now easily gives the claim.
\end{proof}
\begin{proof}[Proof of Theorem \ref{maassform} (1)]
Again we assume that $c$ is odd. By \cite{Sh} Proposition 2.1, the functions $\Theta(k,2c;\tau)$ are  cusp forms  for $\Gamma(4c)$. Thus    the functions
$\Theta(k,2c;4c\tau)$ are  cusp forms for $\Gamma_1(16c^2)$.
Using Theorem \ref{Otransfo} one can conclude that  also $\mathcal{M}\left(\frac{a}{c};z\right)$ transforms correctly under  $\Gamma_1(16c^2)$.
That it is an eigenfunction under $\Delta_{\frac12}$ follows as in \cite{BO2} page 21.
\end{proof}
\section{The case $u=-1$}  \label{section-1}
Here we consider the case $u=-1$.   We assume the same notation as in Section \ref{SectionTrans}.
Equation   (\ref{overrankgen}) gives
\begin{eqnarray*}
\mathcal{O}(-1;q) = 4 \frac{(-q)_{\infty}}{(q)_{\infty}}
\sum_{n \in \Z} (-1)^n \frac{q^{n^2 +n }}{\left(1+q^n \right)^2}.
\end{eqnarray*}
This function is  more complicated than $\mathcal{O}\left( \frac{a}{c};q\right)$ with $c \not=2$ since double poles occur.
To overcome this problem,   we first   prove a transformation law  for the function
\begin{eqnarray*}
\mathcal{O}_r(q)
 :=  4 \frac{(-q)_{\infty}}{(q)_{\infty}}
\sum_{n \in \Z} (-1)^{n+1} \frac{q^{n^2}}{\left(1+e^{2 \pi i r }\, q^n \right)}.
\end{eqnarray*}
This function is related to $\mathcal{O}(-1;q)$ by
\begin{eqnarray*}
\frac{1}{2 \pi i} \left[
\frac{\partial}{\partial r} \mathcal{O}_r(q)
 \right]_{r=0}= \mathcal{O}(-1;q).
\end{eqnarray*}
To state the transformation law for $\mathcal{O}_r(q)$ we additionally need the functions
\begin{eqnarray*}
\mathcal{U}_r(q)&:=& e^{  \pi i r}
\frac{\eta(z)}{\eta^2(2z)}
\sum_{\substack{m \in \Z\\ m \text{ odd} } }
\frac{q^{\frac12( m^2+m) }}{1-e^{ 2 \pi ir} q^m},\\
I_{k,\nu,r}^{\pm}(w)&:=& \int_{\R}
\frac{e^{-\frac{2 \pi wx^2}{k} }}{1+ e^{\pm \frac{2 \pi i \nu}{k} \mp \frac{\pi i \widetilde{k}}{2k} + 2 \pi i r - \frac{2\pi wx}{k}}}\, dx.
\end{eqnarray*}
\begin{theorem}   \label{transformation-1}
Assume that  $r \in \R$ with $|r|$ sufficiently small.
\begin{enumerate}
\item If $k$ is odd, then
\begin{multline*}
\mathcal{O}_r(q) = - 2 \sqrt{2} i w^{ -\frac{1}{2}} \frac{\omega_{h,k}^2}{\omega_{2h,k}}\,
e^{ \frac{\pi i h'}{8k}+ \frac{2 \pi k r^2}{w}}\, \mathcal{U}_{\frac{ir}{w}} \left(q_1^{ \frac12} \right)\\
- 2 \sqrt{2} w^{\frac12 }   \frac{\omega_{h,k}^2}{\omega_{2h,k}}\, \frac{1}{k}
\sum_{ \substack{ \nu \pmod k \\ \pm}}
e^{\frac{2 \pi i h'}{k}  \left(- \nu^2 + \frac{\nu}{2} \right)
}
I_ {k,\nu,r}^{\pm }(w).
\end{multline*}
\item If $k$ is even, then
\begin{eqnarray*}
\mathcal{O}_r(q) =
- i \frac{\omega_{h,k}^2}{\omega_{h,k/2 }}\ w^{ -\frac12} e^{ \frac{2 \pi kr^2}{w}}
\mathcal{O}_{\frac{r}{iw} } (q_1)
- \frac{4 \omega_{h,k}^2w^{\frac12 }}{k\omega_{h,k/2}}
\sum_{\nu \pmod k } (-1)^{\nu}
e^{ - \frac{2 \pi i h' \nu^2}{k}} I_ { k,\nu,r}^{+}(w).
\end{eqnarray*}
\end{enumerate}
\end{theorem}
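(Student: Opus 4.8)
The plan is to mimic the proof of Theorem \ref{Otransfo} given above, adapting the residue computation to the new function $\mathcal{O}_r(q)$, which differs in that the denominator $1+e^{2\pi i r}q^n$ contributes simple poles rather than the double poles one would get from $\mathcal{O}(-1;q)$ directly. First I would write $n=km+\nu$ with $0\le\nu<k$, $m\in\Z$, and apply Poisson summation to the inner sum over $m$ exactly as in the passage from \eqref{formel2} to \eqref{formel3a}, using the modular transformation \eqref{etatrans} for $\eta(z)/\eta^2(2z)$ (here the roles of $\eta(z)$ and $\eta(2z)$ are swapped relative to $\mathcal{O}(a/c;q)$, which is why the dual eta-quotient is $\eta(z)/\eta^2(2z)$ and why the modular-form prefactor behaves differently, cf.\ Remark 4 in the introduction). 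This produces an integral over $\R$ whose integrand is, up to the Gaussian $e^{-2\pi wx^2/k}$, a ratio of hyperbolic functions with a single string of poles at the points $x_m := im/w$ coming from the zeros of $1+e^{2\pi i r}e^{2\pi i(\text{linear in }x)}$; because $r$ is small and real, these poles are simple and are displaced off the real axis, so no principal-value subtlety arises for the $\sum_1$ part.

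Next I would shift the path of integration through the points $\omega_n=(2n+\widetilde k)i/(4w)$ and collect residues via the Residue Theorem, organizing the result as $\widetilde{\mathcal{O}}_r(q)=\sum_1+\sum_2$ in parallel with the proof of Theorem \ref{Otransfo}. In the odd-$k$ case there is an extra pole on the path of integration at $\omega_n$ (as in cases (2) and (6) of Theorem \ref{Otransfo}), contributing the ``$\frac12\lambda$'' half-residue terms; in the even-$k$ case the relevant pole structure matches case (1). Summing the geometric-type series in the residues $\lambda_{n,m}^{\pm}$ — which again satisfy $\lambda_{n+1,m}^{\pm}=e^{2\pi i(x_m-\nu_m^{\pm})/k}\lambda_{n,m}^{\pm}$ for the appropriate choice $\nu_m^{\pm}=-h'(m\mp\text{something})$ — reassembles, after using \eqref{etatrans} and the odd/even splittings \eqref{etaeven}, \eqref{etaodd}, the $q_1$-side generating function. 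For odd $k$ this is $\mathcal{U}_{ir/w}(q_1^{1/2})$ with the stated Gaussian factor $e^{2\pi kr^2/w}$ (the shift $r\mapsto ir/w$ and the appearance of $q_1^{1/2}$ both come from the substitutions $\nu\mapsto -h'(\nu+p)$, $n=2p+\delta$ and completing the square in the exponent, exactly as in the derivation of \eqref{intodd}); for even $k$ one gets $\mathcal{O}_{r/(iw)}(q_1)$, reflecting the self-dual shape of $\mathcal{O}_r$ under the even-level transformation.

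The $\sum_2$ term is handled just as in the proof of Theorem \ref{Otransfo}: translate $x\mapsto x+\omega_n$, write $n=2p+\delta$ with $\delta\in\{0,-1\}$, observe that the $\delta=-1$ contribution cancels by the $p\leftrightarrow -p$ (resp.\ $p\leftrightarrow -p+1$) symmetry, evaluate the remaining sum over $p$ as an eta-quotient via \eqref{sump} (even $k$) or \eqref{sump1} (odd $k$), and recognize the leftover $x$-integral as $I_{k,\nu,r}^{\pm}(w)$. Tracking the $\eta$-factors through \eqref{etatrans}–\eqref{etaodd} and the $\omega_{h,k}$ multiplier system yields the prefactors $\omega_{h,k}^2/\omega_{2h,k}$ (odd $k$) and $\omega_{h,k}^2/\omega_{h,k/2}$ (even $k$) together with the powers $w^{\pm 1/2}$. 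I expect the main obstacle to be bookkeeping the two independent sign/residue choices $\pm$ in the odd-$k$ case — they are no longer tied to a single congruence class of $\nu\pmod k$ as they were for $c\nmid k$ in Theorem \ref{Otransfo}, so both strings of poles $x_m=im/w$ (with $m\ge0$ and $m\ge1$ respectively) must be summed and their residues carefully matched against the two halves of the symmetrized $\sum_2$; this is what produces the sum over $\pm$ inside $I_{k,\nu,r}^{\pm}(w)$ and the symmetric exponent $e^{2\pi i h'(-\nu^2+\nu/2)/k}$. Once the residue sums are correctly identified as $\mathcal{U}_{ir/w}(q_1^{1/2})$ and $\mathcal{O}_{r/(iw)}(q_1)$, the remaining combination of \eqref{etatrans}, \eqref{etaeven}, \eqref{etaodd} is a lengthy but routine calculation, as in Theorem \ref{Otransfo}.
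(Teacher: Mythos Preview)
Your overall strategy --- Poisson summation on the inner sum, contour shift through $\omega_n$, split into $\sum_1+\sum_2$, and reassembly via the $\eta$-transformations --- is exactly the paper's approach. However, your description of the pole structure is off in ways that matter for the computation.

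The denominator here is $1+e^{2\pi i r}q^n$, not $1-2\cos(2\pi a/c)q^n+q^{2n}$, so after Poisson summation the integrand has simple poles at
\[
x_m=\frac{i}{w}\Bigl(m+\tfrac{1}{2}\pm kr\Bigr),
\]
not at $im/w$; the half-integer shift comes from $e^{(\cdots)}=-1$ rather than $+1$, and the $\pm kr$ perturbation comes from the $r$-deformation. Correspondingly there is a \emph{single} relevant $\nu_m=-\tfrac{1+k}{2}(2m+1)h'$ (for $k$ odd), with the $\pm$ labelling the two pole strings in $x_m$, not two choices of $\nu_m^{\pm}$ as in Theorem~\ref{Otransfo}. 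Because of the $\tfrac12$ shift, for $k$ odd there are \emph{no} poles on the real axis or at $\omega_n$, hence no half-residue terms; the residues to collect are those with $n\ge 2m+1$, and their geometric resummation gives $\sum_1$ directly as the series defining $\mathcal{U}_{ir/w}(q_1^{1/2})$ times $e^{2\pi kr^2/w}$. Your analogy with cases (2) and (6) of Theorem~\ref{Otransfo} is therefore misplaced. Once you correct the pole locations, the rest of your outline (the $\sum_2$ computation via $n=2p+\delta$, the $p$-sum identified with~\eqref{sump1}, and the $\eta$-multiplier bookkeeping through \eqref{etatrans}--\eqref{etaodd}) goes through exactly as you describe.
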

\begin{proof}
We proceed similarly as in the proof of Theorem \ref{Otransfo} and therefore we  skip most of the  details.
 Moreover, we only show the claim for $k$ odd, the case $k$ even is treated similarly.
 Define
\begin{eqnarray*}
\widetilde{\mathcal{O}}_r(q)
 :=
\sum_{n \in \Z} (-1)^{n} \frac{q^{n^2}}{\left(1+e^{2 \pi i r }\, q^n \right)}.
\end{eqnarray*}
Changing $n$ into $\nu + km$ with $\nu$ running modulo $k$ and $m \in \Z$, we obtain, using Poisson summation and making  a change of variables,
\begin{eqnarray*}
\widetilde{\mathcal{O}}_r(q)
= \frac{1}{k}
\sum_{\nu \pmod k}
(-1)^{\nu}\, e^{\frac{2 \pi i h \nu^2}{k}} \sum_{\substack{ n \in \N_0\\\pm}}
\int_{\R}
\frac{e^{- \frac{2 \pi w x^2}{k}  + \frac{\pi i}{k} ( 2 n + 1) (x- \nu) }}{1+ e^{2 \pi i r \pm \frac{2 \pi i h \nu}{k}  \mp \frac{2 \pi w}{k}x}}\, dx.
\end{eqnarray*}
One can show that  poles of the integrand only lie in points
\begin{eqnarray*}
x_m : = \frac{i}{w} \left(m + \frac{1}{2}  \pm k r \right)
\end{eqnarray*}
and we have nontrivial residues at most for one $\nu \pmod k$ which can can chose as
\begin{displaymath}
\nu_m:=
-\frac{1+k}{2} (2m + 1) h'.
\end{displaymath}
Using the Residue Theorem, we shift the path of integration through
\begin{eqnarray*}
\omega_n : = \frac{2n + 1}{4w}.
\end{eqnarray*}
There are  no poles  on the real axis or in $\omega_n$.
Moreover we have to take those $x_m$ with $m \geq 0$ into account that satisfy $n \geq 2m+1$.
We denote by $\lambda_{n,m}^{\pm}$ the residue of each summand and   compute
\begin{eqnarray*}
\lambda_{n,m}^{\pm}  = \pm \frac{k}{2 \pi w} \, e^{ - \frac{2 \pi  wx_m^2}{k}  + \frac{\pi i}{k}(2n+1)(x_m- \nu_m) }.
\end{eqnarray*}
Thus
\begin{eqnarray*}
\lambda_{n+1,m}^{\pm} = e^{\frac{2\pi i}{k} (x_m-\nu_m)} \lambda_{n,m}^{\pm}
\end{eqnarray*}
which gives
\begin{eqnarray*}
\widetilde{\mathcal{O}}_w(q)  = \sum_1 + \sum_2,
\end{eqnarray*}
where
\begin{eqnarray*}
\sum_1&=& \frac{2 \pi i}{k}
  (-1)^{\nu_m}  e^{ \frac{2\pi i h\nu_m^2}{k}}
\sum_{ \substack{m \geq 0\\ \pm }}
\frac{\lambda_{2m+1,m}^{\pm}}{1-e^{ \frac{2\pi i}{k} (x_m-\nu_m)}}, \\
\sum_2&=&
\frac{1}{k}
\sum_{\nu \pmod k}
(-1)^{\nu}\, e^{\frac{2 \pi i h \nu^2}{k}} \sum_{\substack{ n \in \N_0\\\pm}}
\int_{\R+\omega_n}
\frac{e^{- \frac{2 \pi w x^2}{k}  + \frac{\pi i}{k} ( 2 n + 1) (x- \nu) }}{1+ e^{2 \pi i r \pm \frac{2 \pi i h \nu}{k}  \mp \frac{2 \pi w}{k}x}}\, dx.
\end{eqnarray*}
A lengthy calculation gives that
\begin{eqnarray*}
\sum_1 =
\frac{i}{w}\,
e^{\frac{2 \pi k r^2}{w}- \frac{\pi r}{w}}\sum_{\substack{m \in \Z\\ m \text{ odd}}}
\frac{q_1^{ \frac{1}{4} \left(m^2+m  \right)}}{1- e^{-\frac{2\pi r}{w}}q_1^{\frac12 m}}.
\end{eqnarray*}
To compute $\sum_2$, we change the sum over $n$ back  into a sum over $\Z$ and change $x \mapsto  x+\omega_n$.
This gives
\begin{eqnarray*}
\sum_2=\frac{1}{k} \sum_{ \nu \pmod k} (-1)^{\nu} \, e^{\frac{2 \pi i h \nu^2}{k}}
\sum_{n \in \Z} e^{-\frac{\pi}{8kw} (2n+1)^2 -\frac{\pi i}{k}(2n+1) \nu}
\int_{\R}
\frac{e^{-\frac{2\pi wx^2}{k} }}{1+e^{ -\frac{\pi i}{2k}(2n+1) + \frac{2 \pi i h \nu}{k} - \frac{2 \pi wx}{k} + 2 \pi ir}}\, dx.
\end{eqnarray*}
Changing $n \mapsto 2 p + \delta$ (with $p \in \Z$ and $\delta \in \{0,-1\}$) and $\nu \mapsto - h' (\nu+p)$ gives
\begin{multline*}
\sum_2 = \frac{1}{k} \sum_{\nu,p, \delta}
e^{ \frac{2 \pi i h'}{k} \left( - \nu^2 -\frac{1}{16}(2 \delta+1)^2   +\frac{\nu}{2}(2 \delta+1)   \right)   }
q_1^{\frac{1}{16} (4p + 2 \delta+1)^2 }
\int_{\R} \frac{e^{ - \frac{2 \pi w x^2}{k}}}{1- e^{ 2 \pi i r + \frac{2 \pi i  \nu}{k}- \frac{\pi i  }{2k}(2 \delta+1) -\frac{2 \pi w x}{k} }}\, dx.
\end{multline*}
Now the sum over $p$ equals
$
\frac{\eta^2 \left( \frac{1}{k}\left(h'+\frac{i}{z} \right)\right)}{\eta \left(\frac{1}{2k} \left(h'+\frac{i}{z} \right)\right)}$
thus
\begin{equation*}
\sum_2 =
\frac{\eta^2 \left( \frac{1}{k}\left(h'+\frac{i}{z} \right)\right)}{k \eta \left(\frac{1}{2k} \left(h'+\frac{i}{z} \right)\right)} \,
e^{ - \frac{ \pi i h'}{8k}  }
\sum_{\substack{ \nu \pmod k\\ \pm }}
e^{ \frac{2 \pi i h'}{k}\left(- \nu^2+\frac{\nu}{2} \right)}
\int_{\R} \frac{e^{ - \frac{2 \pi w x^2}{k}}}{1+   e^{\pm \frac{2 \pi i \nu}{k}  \mp \frac{\pi i}{2k}+2 \pi i r -\frac{2 \pi wx}{k}} }\, dx
\end{equation*}
which gives the claim using  (\ref{etatrans}),  (\ref{etaeven}), and (\ref{etaodd}).
\end{proof}
>From  Theorem \ref{transformation-1},
 we can conclude a transformation law for $\mathcal{O}(-1;q)$.
For this define
\begin{eqnarray*}
\mathcal{U}(q)&:=& \frac{ 4 \eta(z)}{\eta^2(2z)}
\sum_{ m \in \Z} \frac{q^{ \frac{1}{2}(m^2+m)}}{(1-q^m)^2}, \\
I_{ k,\nu}^{\pm}(w)&:=&\int_{\R}
\frac{e^{-\frac{2 \pi wx^2}{k} }}{\left(1+e^{\pm \frac{2 \pi i \nu}{k} \mp \frac{\pi i \widetilde{k}}{2k} - \frac{2 \pi wx}{k}}\right)^2}\, dx.
\end{eqnarray*}
\begin{corollary} \label{corollary-1}
Assume the notation above. We have.
\begin{enumerate}
\item If $k$ is odd, then
\begin{eqnarray*}
\mathcal{O}(-1;q) =
- \frac{1}{\sqrt{2}}
w^{ -\frac32} \frac{\omega_{h,k}^2}{\omega_{ 2h,k}} e^{\frac{ \pi i h'}{8k}  } \, \mathcal{U} \left( q_1^{ \frac12}\right)
+ 2 \sqrt{2} w^{\frac12 } \frac{\omega_{ h,k}^2}{k\,\omega_{2h,k}}
\sum_{ \substack{\nu \pmod k \\ \pm   }} e^{ \frac{2\pi i h'}{k}\left( - \nu^2 +\frac{\nu}{2}\right)}\, I_{k,\nu}^{\pm}(w).
\end{eqnarray*}
\item If $k$ is even, then
\begin{eqnarray*}
\mathcal{O} (-1;q) = - w^{ -\frac{3}{2}} \frac{\omega_{h,k}^2}{\omega_{ h,k/2} }\mathcal{O}(-1;q_1)
+ \frac{4 \omega_{ h,k}^2}{ k \omega_{h,k/2 }}\, w^{ \frac12}
\sum_{\nu \pmod k} (-1)^{\nu } \, e^{ -\frac{2\pi i h'\nu^2}{k}}\, I_{k,\nu}^+(w).
\end{eqnarray*}
\end{enumerate}
\end{corollary}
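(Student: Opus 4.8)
The plan is to read off the transformation behavior of $\mathcal{O}(-1;q)$ by differentiating the one for $\mathcal{O}_r(q)$ in Theorem \ref{transformation-1} with respect to $r$ at $r=0$, using the identity $\mathcal{O}(-1;q)=\frac{1}{2\pi i}\big[\partial_r\mathcal{O}_r(q)\big]_{r=0}$ recorded in Section \ref{section-1}. For $|r|$ small the poles of the summands of $\mathcal{O}_r$, of $\mathcal{U}_{ir/w}$ (resp.\ $\mathcal{O}_{r/(iw)}$), and of the integrands defining $I^\pm_{k,\nu,r}(w)$ stay uniformly away from the relevant contours, so one may differentiate the $q$-series termwise and the Mordell integrals under the integral sign; it therefore suffices to apply $\frac{1}{2\pi i}\,\partial_r|_{r=0}$ to both sides of Theorem \ref{transformation-1}.

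For the ``modular'' term on the right, the Gaussian $e^{2\pi kr^2/w}$ has vanishing $r$-derivative at $r=0$ and so drops out, while the chain rule through the substituted parameter ($ir/w$ when $k$ is odd, $r/(iw)$ when $k$ is even) produces an extra factor proportional to $w^{-1}$; this is exactly what turns $w^{-1/2}$ into $w^{-3/2}$. When $k$ is even nothing further is needed: $\partial_r\,\mathcal{O}_{r/(iw)}(q_1)\big|_{r=0}=\frac{1}{iw}\,\partial_s\mathcal{O}_s(q_1)\big|_{s=0}=\frac{2\pi i}{iw}\,\mathcal{O}(-1;q_1)$ by the defining identity again, producing the term $-w^{-3/2}\frac{\omega_{h,k}^2}{\omega_{h,k/2}}\,\mathcal{O}(-1;q_1)$. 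When $k$ is odd, $\partial_r\,\mathcal{U}_{ir/w}(q_1^{1/2})\big|_{r=0}=\frac{i}{w}\,\partial_s\mathcal{U}_s(q_1^{1/2})\big|_{s=0}$; here differentiating the prefactor $e^{\pi is}$ contributes nothing, because $\sum_{m\ \text{odd}}\frac{Q^{(m^2+m)/2}}{1-Q^m}=0$ by pairing $m$ with $-m$, and differentiating $\bigl(1-e^{2\pi is}Q^m\bigr)^{-1}$ yields, after the same pairing to symmetrize the exponent, precisely the double-pole $q$-series defining $\mathcal{U}$, giving the term $-\frac{1}{\sqrt2}\,w^{-3/2}\frac{\omega_{h,k}^2}{\omega_{2h,k}}\,e^{\pi ih'/(8k)}\,\mathcal{U}(q_1^{1/2})$.

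For the Mordell integrals, differentiating under the integral sign turns the simple-pole integrand $\bigl(1+e^{\,\cdots\,+2\pi ir}\bigr)^{-1}$ of $I^\pm_{k,\nu,r}(w)$ into $-2\pi i\,e^{\,\cdots\,}\bigl(1+e^{\,\cdots\,}\bigr)^{-2}$ at $r=0$. Using $\frac{e^A}{(1+e^A)^2}=\frac{1}{1+e^A}-\frac{1}{(1+e^A)^2}$, the square-denominator piece reproduces $I^\pm_{k,\nu}(w)$, while the simple piece, together with the theta coefficients and $\eta$-prefactors, reassembles exactly the $r=0$ specialization of Theorem \ref{transformation-1}, i.e.\ the transformation law of $\mathcal{O}_0(q)$. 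But $\mathcal{O}_0(q)=4\frac{(-q)_\infty}{(q)_\infty}\sum_n(-1)^{n+1}\frac{q^{n^2}}{1+q^n}$ equals the constant $-2$: pairing $n$ with $-n$ gives $\sum_n(-1)^{n+1}\frac{q^{n^2}}{1+q^n}=-\frac12\sum_n(-1)^nq^{n^2}$, and $\sum_n(-1)^nq^{n^2}=\frac{(q)_\infty}{(-q)_\infty}$ by the Jacobi triple product; correspondingly $\mathcal{U}_0\equiv0$ and $I^+_{k,\nu,0}(w)+I^-_{k,\nu,0}(w)=\int_{\R}e^{-2\pi wx^2/k}\,dx$. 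Hence the simple-pole remainder is a fully explicit constant, which one then disposes of directly. Collecting the surviving contributions and inserting the $\eta$-transformations (\ref{etatrans}), (\ref{etaeven}), (\ref{etaodd}) exactly as at the end of the proof of Theorem \ref{transformation-1} gives the corollary.

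The step I expect to be the main obstacle is the bookkeeping of these constant and lower-order terms: one must verify that the leftover simple-pole integrals, in combination with the closed forms $\mathcal{O}_0\equiv-2$ and $\mathcal{U}_0\equiv0$, really reorganize into the stated formula with the correct signs, and that the products of the roots of unity $\omega_{h,k}$, $\omega_{2h,k}$, $\omega_{h,k/2}$ and the powers of $w$ match on the nose. By contrast, justifying the interchange of $\partial_r$ with the sums and with the integration is routine, since for $|r|$ small all poles stay bounded away from the contours and the Gaussian $e^{-2\pi wx^2/k}$ supplies the domination needed to differentiate under the integral sign.
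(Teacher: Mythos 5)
Your overall strategy is exactly the paper's: apply $\tfrac{1}{2\pi i}\partial_r\big|_{r=0}$ to Theorem \ref{transformation-1}, use the chain rule through the substituted parameter ($ir/w$, resp.\ $r/(iw)$) to produce the extra factor of $w^{-1}$, kill the simple-pole $q$-series by the $m\leftrightarrow -m$ pairing, and differentiate the Mordell integrals under the integral sign. (The paper writes out only part (i) and compresses the whole argument into two displayed derivative identities.) Your auxiliary claims are all correct and easily checked: $\mathcal{O}_0(q)\equiv -2$ via $\sum_n(-1)^nq^{n^2}=(q)_\infty/(-q)_\infty$, $\mathcal{U}_0\equiv 0$, $I^+_{k,\nu,0}(w)+I^-_{k,\nu,0}(w)=\int_{\R}e^{-2\pi wx^2/k}\,dx$, and the reduction of $\partial_s\bigl(1-e^{2\pi is}Q^m\bigr)^{-1}\big|_{s=0}$ to the double-pole series.

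The gap is exactly the step you flag as ``the main obstacle'' and then do not carry out, and it does not close as you assert. Your (correct) identity $\tfrac{1}{2\pi i}\partial_r\bigl(1+e^{B+2\pi ir}\bigr)^{-1}\big|_{r=0}=-e^B(1+e^B)^{-2}=(1+e^B)^{-2}-(1+e^B)^{-1}$ means that, once multiplied by the prefactor of the Mordell sum in Theorem \ref{transformation-1} (which carries a minus sign), the double-pole piece enters the final formula with the \emph{opposite} sign to the one in the stated corollary, while the simple-pole remainder reassembles to minus the $r=0$ Mordell term, i.e.\ to $M(0)-\mathcal{O}_0(q)=M(0)+2$ where $M$ denotes the modular term. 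For $k$ odd this is the constant $2$ (since $\mathcal{U}_0\equiv0$); for $k$ even it is $2+2i\,\omega_{h,k}^2\,\omega_{h,k/2}^{-1}w^{-1/2}$, which is not even constant. Neither is zero, and neither appears in the statement, so the remainder cannot simply be ``disposed of directly.'' The paper avoids this issue only because its own proof uses the shortcut $\tfrac{1}{2\pi i}\partial_r(1+e^{B+2\pi ir})^{-1}\big|_{r=0}=-(1+e^B)^{-2}$ (dropping the numerator $e^B$), which is what makes the corollary come out with the printed signs and no leftover. Until you either locate a compensating sign in the bookkeeping or reconcile your honest derivative with the paper's convention, the concluding sentence ``collecting the surviving contributions \dots gives the corollary'' is an assertion rather than a proof, and as written your computation lands on a formula differing from the stated one by the sign of the $I^\pm_{k,\nu}$ term and by the nonvanishing remainder above.
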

\begin{proof}
We only prove  (i), (ii) can be shown  similarly. We have
\begin{eqnarray*}
\frac{1}{ 2 \pi i}
\left[ \frac{\partial}{\partial r}
 \left( \frac{e^{\frac{2\pi kr^2}{w}- \frac{\pi r}{w}}  }{
 1-e^{-\frac{2\pi r}{w}  } q_1^{ \frac{m}{2}}
 }
\right)  \right]_{r=0}
 = \frac{\left(3 q_1^{ \frac{m}{2}} -1\right) }{2 i w \left(1-q_1^{\frac{m}{2} } \right)^2}.
\end{eqnarray*}
Now
\begin{eqnarray*}
\sum_{ m \in \Z}
\frac{q_1^{\frac14(m^2+m)} \left(3q_1^{\frac{m}{2} } -1\right)}{\left(1-q_1^{ \frac{m}{2}} \right)^2}
&=& - 3  \sum_{ m \in \Z}
\frac{q_1^{\frac14(m^2+m)}}{\left(1-q_1^{ \frac{m}{2}} \right)}
+ 2  \sum_{ m \in \Z}   \frac{q_1^{\frac14(m^2+m)}}{\left(1-q_1^{ \frac{m}{2}} \right)^2} \\
&=& 2 \sum_{ m \in \Z}   \frac{q_1^{\frac14(m^2+m)}}{\left(1-q_1^{ \frac{m}{2}} \right)},
\end{eqnarray*}
since in the first sum the $m$th and $-m$th term cancel.
Now (i) can be easily concluded by using that
\begin{eqnarray*}
\frac{1}{2\pi i}
\left[
\frac{\partial}{\partial r} \left(\frac{1}{1+e^{ \pm \frac{2 \pi i \nu}{k} \mp \frac{\pi i}{2k}-\frac{2\pi w x}{k}+ 2 \pi i r}          } \right) \right]_{r=0}
=  -
\frac{1}{\left(1+e^{ \pm \frac{2 \pi i \nu}{k} \mp \frac{\pi i}{2k}-\frac{2\pi w x}{k}}   \right)^2       }.
\end{eqnarray*}
\end{proof}
\begin{remark}
>From Corollary   \ref{corollary-1} we can obtain the transformation law for $\mathcal{O}(-1;z)$ under
$\left( \begin{smallmatrix} \alpha&\beta\\ \gamma& \delta  \end{smallmatrix}  \right)  \in$
$\SL_2(\Z)$  by setting $h'= \alpha$, $k= \gamma$, $h= - \delta$, and
$w=-i \left( \delta + \gamma \tau\right)$.
\end{remark}
We next realize the integral occurring in Corollary     \ref{corollary-1}  for $k=1$ as a theta integral.
For this  let
\begin{eqnarray*}
I_r^{\pm}(\tau)&:=&
\int_{\R} \frac{e^{ - \frac{2 \pi i x^2}{\tau}}}{1 \pm i e^{2 \pi i r - \frac{2 \pi i x}{\tau} }},\\
I(\tau) &:=&
\frac{1}{2 \pi  i}  \left[ \frac{\partial}{\partial r}\left(I_{r}^+(\tau) + I_{r}^-(\tau) \right) \right]_{r=0}.
\end{eqnarray*}
  \begin{lemma} \label{thetaint-1}
  We have
  \begin{eqnarray*}
  I(\tau)=   -
  \frac{(-i \tau)^2}{ \sqrt{2} \pi}
  \int_{0}^{\infty}
  \frac{\eta^2 (i u) }{\eta\left(\frac{iu}{2}\right)  (-i  ( i \tau + iu)    )^{\frac32}}\, du .
  \end{eqnarray*}
  \end{lemma}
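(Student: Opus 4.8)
The plan is to follow the same template as in the proof of Lemma \ref{thetalemma}, transferring all the ingredients from the single-pole setting to the double-pole setting by differentiating in $r$ at the very end. First I would reduce to the case $\tau = it$ with $t > 0$ by analytic continuation, and rescale $x \mapsto x/t$ to bring $I_r^\pm(it)$ into the form $\int_\R e^{-2\pi t x^2}/(1 \pm i e^{2\pi i r - 2\pi i x})\, dx$. The next step is a Mittag--Leffler partial fraction expansion of the integrand $1/(1 \pm i e^{2\pi i r - 2\pi i x})$ in the variable $x$: as in \eqref{ML}, this expresses it (after adding the appropriate convergence-forcing constant terms that cancel upon integration) as a sum over $m \in \Z$ of simple fractions $1/(x - i(m + c_\pm + r))$ for suitable shifts $c_\pm$ determined by the factor $\pm i = e^{\pm \pi i/2}$, i.e. $c_+ = \tfrac14$, $c_- = -\tfrac14$ (up to reindexing). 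Interchanging sum and integral then reduces everything to the basic Gaussian-over-linear integral
\begin{eqnarray*}
\int_{-\infty}^{\infty} \frac{e^{-\pi t x^2}}{x - is}\, dx = \pi i s \int_0^\infty \frac{e^{-\pi u s^2}}{\sqrt{u+t}}\, du,
\end{eqnarray*}
the same identity used in Lemma \ref{thetalemma}, proved by checking both sides satisfy the same first-order ODE in $t$ with the same limit at infinity.

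Carrying this out for $I_r^+(it) + I_r^-(it)$ and combining the two sums, I expect to obtain an expression of the shape
\begin{eqnarray*}
I_r^+(it) + I_r^-(it) = (\text{const}) \int_0^\infty \frac{du}{\sqrt{u+t}} \sum_{m \in \Z} \varepsilon_m\, (m + c + r)\, e^{-2\pi u (m + c + r)^2},
\end{eqnarray*}
where the signs $\varepsilon_m$ and the shift $c$ are such that, at $r=0$, the bracketed sum is (up to normalization) the theta series $\theta$ attached to $\eta^2(iu)/\eta(iu/2)$ — this is the identity $\eta^2(z)/\eta(z/2) = \sum_{n} (-1)^n q^{(n^2+n)/4}$ or a close variant, which should be recognizable from the $q$-expansion $\sum_{m} q_1^{(m^2+m)/4}/(1 - q_1^{m/2})$ appearing already in Corollary \ref{corollary-1}. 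Now apply $\frac{1}{2\pi i}\frac{\partial}{\partial r}\big|_{r=0}$: since the $r=0$ value of the relevant piece vanishes by the pairing of the $m$th and $-m$th terms (exactly the cancellation exploited in the proof of Corollary \ref{corollary-1}), differentiation lands on the remaining factor and produces the correct weight-$\tfrac32$ theta integrand $\eta^2(iu)/(\eta(iu/2)\,(-i(i\tau+iu))^{3/2})$, with the power $(-i\tau)^2$ out front arising from the two $x \mapsto x/t$ rescalings (one from each $\tau^{-1}$ inside the exponential, compounded by the Jacobian), and the $1/(\sqrt2\,\pi)$ from collecting all the elementary constants.

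The main obstacle I anticipate is bookkeeping of the constants and the precise shift parameters: getting the factor $\pm i$ in $1 \pm i e^{2\pi i r - 2\pi i x/\tau}$ to translate into the correct half-integer shifts $\tfrac14, -\tfrac14$ in the partial fraction decomposition, and then verifying that the resulting theta series is \emph{exactly} the one attached to $\eta^2/\eta(\cdot/2)$ rather than off by a scaling or a multiplicative constant. A secondary subtlety is justifying the interchanges of summation and integration — as in Lemma \ref{thetalemma} this requires inserting and later cancelling the extra constant summands to force absolute convergence — and handling the principal-value nature of the integrals when poles approach the real axis for $r$ near $0$; restricting to $|r|$ small (as in the hypothesis of Theorem \ref{transformation-1}) and taking the derivative only at the end keeps this under control. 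Everything else is a routine but lengthy computation modeled line-by-line on the proof of Lemma \ref{thetalemma}.
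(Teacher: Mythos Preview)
Your overall strategy (analytic continuation to $\tau = it$, partial fractions, Gaussian integral identity, then identification of the theta series) is the paper's strategy, but there is a gap at the step where the kernel $(u+t)^{-3/2}$ has to appear.

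The simple-pole identity $\int_\R \frac{e^{-\pi tx^2}}{x-is}\,dx = \pi i s\int_0^\infty \frac{e^{-\pi us^2}}{\sqrt{u+t}}\,du$ from Lemma~\ref{thetalemma} produces only the power $(u+t)^{-1/2}$, and differentiating in $r$ afterwards acts solely on $s=s(r)$ inside the theta sum; it never touches $(u+t)^{-1/2}$. So your claim that ``differentiation lands on the remaining factor and produces the correct weight-$\tfrac32$ theta integrand $\eta^2(iu)/\bigl(\eta(iu/2)\,(-i(i\tau+iu))^{3/2}\bigr)$'' is not correct as stated: the exponent $3/2$ does not materialize this way. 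Relatedly, your accounting for the prefactor $(-i\tau)^2$ as coming from ``two $x\mapsto x/t$ rescalings'' is off, since there is only one such substitution.

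The paper handles this by reversing the order of operations: it applies $\tfrac{1}{2\pi i}\partial_r\big|_{r=0}$ \emph{after} the Mittag--Leffler decomposition but \emph{before} evaluating the Gaussian integral, which turns the simple poles $1/(x-is)$ into double poles $1/(x-is)^2$. It then uses the new identity
\[
\int_\R \frac{e^{-2\pi tx^2}}{(x-is)^2}\,dx \;=\; \sqrt{2}\,\pi t\int_0^\infty \frac{e^{-2\pi us^2}}{(u+t)^{3/2}}\,du,
\]
obtained via integration by parts from $\int_\R e^{-2\pi u^2 t}\,\frac{u}{u-is}\,du = \frac{1}{2\sqrt2}\int_0^\infty \frac{e^{-2\pi us^2}}{(u+t)^{3/2}}\,du$ (verified by the same ODE argument as in Lemma~\ref{thetalemma}). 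This is exactly what raises the exponent from $1/2$ to $3/2$ and supplies the second factor of $t$ in $(-i\tau)^2$. The resulting sum over poles then collapses to $\sum_{m\text{ odd}} e^{-\pi u m^2/8} = \eta^2(iu)/\eta(iu/2)$. Your route could in principle be repaired by an integration by parts in $u$ at the very end, but that is a genuine additional step you have not indicated, not a consequence of the $m\leftrightarrow -m$ cancellation you invoke.
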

  \begin{proof}
  Via analytic continuation it   is sufficient    to show the claim for $\tau= it$.
     Making the change of variables $x \mapsto -\frac{x}{t}$  gives
  \begin{eqnarray*}
  I_{r}^{\pm}(it)   =  t
  \int_{\R}  \frac{e^{- 2 \pi  t x^2  }}{1   \pm i  e^{2 \pi i r} \, e^{2 \pi x}}\, dx.
  \end{eqnarray*}
  We  use the theory of Mittag-Leffler to rewrite
  $$
  \frac{1}{1 + i e^{2 \pi ir} \, e^{2 \pi x} }
   +   \frac{1}{1 - i e^{2 \pi ir} \, e^{2 \pi x} }
  = - \frac{1}{2\pi}
  \sum_{m \in \Z}
  \frac{1 }{x- i \left( - m +\frac{1}{4}  -r   \right)}\,
  +  \frac{1 }{x- i \left( m -\frac{1}{4} -r   \right)}\,.
  $$
  This implies that
  \begin{eqnarray*}
  I_r^{+}(it)
  + I_{r}^{-}(it)
  = -\frac{t}{2 \pi}
  \sum_{ m \in \Z   }
  \int_{\R} \left(
  \frac{e^{-2 \pi t x^2}}{x- i  \left( -m +\frac{1}{4}  -r   \right)  }
  +  \frac{e^{-2 \pi t x^2}}{x- i  \left( m - \frac{1}{4} -r   \right)  } \right)
  \, dx.
  \end{eqnarray*}
  From this we conclude
  $$
  I(it) =  \frac{t}{4 \pi^2}
  \sum_{ \substack{m \in \Z \\ \pm  }  }
  \int_{\R}
  \frac{e^{-2 \pi t x^2}}{\left(x- i  \left( m +\frac{1}{2} \mp \frac{1}{4}    \right)  \right)^2}\, dx =
  \frac{t}{4 \pi^2}
  \sum_{ \substack{m \in \Z \\ m \text{ odd} }  }
  \int_{\R}
  \frac{e^{-2 \pi t x^2}}{\left(x- \frac{im}{4}  \right)^2}\, dx.
  $$
  We have  the integral identity
  \begin{eqnarray*}
  \int_{\R} e^{- 2 \pi u^2 t}
  \frac{u}{u-is}\, du
  = \frac{1}{2 \sqrt{2}} \int_{0}^{\infty}
  \frac{e^{ -2 \pi u s^2}}{(u+t)^{\frac32 }}\, du \qquad (s \not =0)
  \end{eqnarray*}
  which follows since both sides are   solutions of the differential equation
  $-\frac{\partial}{\partial t} + 2 \pi s^2 = \frac{1}{2 \sqrt{2} t^{\frac32 }}$ and have limit $0$ as $t \to \infty$.
  Using  integration by parts gives
    \begin{eqnarray*}
  \int_{\R}
  \frac{e^{-2 \pi tx^2}}{(x-is)^2}\, dx
  =   \sqrt{2} \pi t \int_{0}^{\infty}
  \frac{e^{-2 \pi u s^2}}{(u+t)^{\frac32}}\, du.
  \end{eqnarray*}
    Thus
  $$
  I(it)
  =- \frac{t^2}{2 \sqrt{2}\pi}
  \int_{0}^{\infty}
  \sum_{\substack{ m \in \Z \\m \text{ odd}  }}
  \frac{e^{-\frac{\pi u m^2}{8}}}{(u+t)^{\frac32}}\, du
  $$
  which easily gives the claim.
\end{proof}
Combining Theorem \ref{transformation-1}, Corollary  \ref{transformation-1},
 and Lemma \ref{thetaint-1}
  gives
        \begin{corollary} \label{transcor}
  For $z\in \H$, we have
  \begin{eqnarray*}
  \mathcal{O}\left(-1;-\frac{1}{z}\right)
  = -\frac{i}{\sqrt{2}}   (-i z)^{\frac32}  \cdot
  \mathcal{U}\left(-1;\frac{z}{2}\right)
  +   2 \frac{(-iz)^{\frac32}}{\pi}  \int_{0}^{\infty}
  \frac{\eta^2 (i u) }{\eta(iu /2)  (-i  ( i z + iu)    )^{\frac32}}\, du .
  \end{eqnarray*}
  \end{corollary}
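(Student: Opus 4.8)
The plan is to combine the two transformation laws for $\mathcal{O}(-1;q)$ from Corollary \ref{corollary-1} with the theta-integral identity of Lemma \ref{thetaint-1}, specializing everything to the single modular inversion $z \mapsto -1/z$, i.e. to the parameters $k=1$, $h=h'=0$, $\widetilde k = 1$. First I would set $w = -iz$ so that $q = e^{-2\pi i /z}$ corresponds to $\mathcal{O}(-1;-1/z)$ on the left and $q_1 = e^{2\pi i z}$ corresponds to the honest $q$-series on the right. With $k=1$ the Kloosterman-type factors $\omega_{h,k}$ and $\omega_{2h,k}$ are trivial, the sum over $\nu \pmod k$ collapses to the single term $\nu = 0$, and the phase $e^{\pi i h'/8k}$ becomes $1$; so part (i) of Corollary \ref{corollary-1} reads
\begin{eqnarray*}
\mathcal{O}(-1;-1/z) = -\frac{1}{\sqrt 2}(-iz)^{-\frac32}\,\mathcal{U}\!\left(q_1^{1/2}\right) + 2\sqrt 2\,(-iz)^{\frac12}\sum_{\pm} I_{1,0}^{\pm}(-iz).
\end{eqnarray*}
I would then check that $\mathcal{U}(q_1^{1/2})$ with $q_1^{1/2} = e^{\pi i z}$ is exactly $\mathcal{U}(-1;z/2)$ in the notation of Corollary \ref{transcor} — this is just matching the definition of $\mathcal{U}(q)$ against $\mathcal{U}(-1;\cdot)$ and tracking the substitution $z \mapsto z/2$, which also explains the weight $-3/2$ becoming the stated $(-iz)^{3/2}$ after dividing through.

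Next I would identify the integral piece. By definition $I_{1,0}^{\pm}(w) = \int_{\R} e^{-2\pi w x^2}\big(1 + e^{\mp \pi i/2 - 2\pi w x}\big)^{-2}\,dx$, and since $e^{\mp \pi i/2} = \mp i$ this is precisely $I_r^{\pm}(\tau)$-type data at $r=0$ after the change of variables relating $w$ and $\tau = -1/z$; more precisely, $\sum_{\pm} I_{1,0}^{\pm}(-iz)$ is the $r$-derivative expression that Lemma \ref{thetaint-1} evaluates, namely $I(\tau)$ with $\tau$ chosen so that $-i\tau = $ the relevant argument. I would carefully bookkeep the relation between the variable in $I_{k,\nu}^{\pm}(w)$ (which came from differentiating $I_{k,\nu,r}^{\pm}$ in $r$ at $r=0$, as in the proof of Corollary \ref{corollary-1}) and the variable in $I_r^{\pm}(\tau)$ and $I(\tau)$ of Lemma \ref{thetaint-1}: both arise from the same Mittag-Leffler partial fraction expansion, so the squared denominators match and the only subtlety is the scaling of $x$ and the prefactor. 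Plugging Lemma \ref{thetaint-1}'s evaluation
\begin{eqnarray*}
I(\tau) = -\frac{(-i\tau)^2}{\sqrt 2\,\pi}\int_0^{\infty} \frac{\eta^2(iu)}{\eta(iu/2)\,(-i(i\tau+iu))^{3/2}}\,du
\end{eqnarray*}
with $\tau = -1/z$ (so $(-i\tau)^2 = -1/z^2$, to be reconciled against the $(-iz)^{1/2}$ prefactor via $(-iz)^{1/2}\cdot(-iz)^{-2} = (-iz)^{-3/2}$, giving overall $(-iz)^{3/2}$ after one further comparison of normalizations) yields the asserted integral term $2(-iz)^{3/2}\pi^{-1}\int_0^{\infty}\eta^2(iu)\eta(iu/2)^{-1}(-i(iz+iu))^{-3/2}\,du$, up to the constant and the overall $-i/\sqrt2$ versus $-1/\sqrt 2$ discrepancy in the first term which comes from the factor $1/(2\pi i)$ in the definition of $I(\tau)$.

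The main obstacle I anticipate is purely bookkeeping: correctly propagating the various substitutions $w = -iz$, $q_1^{1/2} = e^{\pi i z}$, $z \mapsto z/2$ in $\mathcal{U}$, and $\tau = -1/z$ through the nested definitions $I_{k,\nu,r}^{\pm} \to I_{k,\nu}^{\pm} \to I_r^{\pm} \to I$, and matching the accumulated powers of $(-iz)$ and the constants $\pi$, $\sqrt 2$, $i$ so that the two sides agree exactly. There is no new analytic input: the analytic content is entirely in Theorem \ref{transformation-1} (hence Corollary \ref{corollary-1}) and in Lemma \ref{thetaint-1}, and the differentiation-in-$r$ trick used to derive Corollary \ref{corollary-1} from Theorem \ref{transformation-1} is reused verbatim to pass from the $r$-dependent integrals to the squared-denominator integrals and then to $I(\tau)$. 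So the proof is a direct substitution-and-simplify once those three ingredients are in place; I would simply state that combining them, after the change of variables $w = -iz$ and specializing $k=1$, gives the corollary.
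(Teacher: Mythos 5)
Your proposal follows exactly the paper's (one-line) proof: specialize Corollary \ref{corollary-1}(1) to $k=1$, $h=h'=0$, and convert the remaining Mordell-type integral into the eta-quotient integral via Lemma \ref{thetaint-1}. One bookkeeping correction: to have $q=e^{\frac{2\pi i}{k}(h+iw)}=e^{-2\pi i/z}$ on the left you must take $w=i/z$ (so that $1/w=-iz$ and $q_1=e^{2\pi i z}$), not $w=-iz$; with that choice $w^{-3/2}=(-iz)^{3/2}$ multiplies $\mathcal{U}\bigl(q_1^{1/2}\bigr)=\mathcal{U}\left(-1;\frac{z}{2}\right)$ directly and no ``dividing through'' is needed.
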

  We next give the transformation law of the non-holomorphic part of $\mathcal{M}(-1;z)$  which can be shown as  in  proof of  Theorem  \ref{transformationnonhol}.
  \begin{lemma}
  For $z \in \H$, we have
  \begin{eqnarray*}
  J(-1;z+1)&=&J(-1;z),\\
  \frac{1}{(-iz)^{\frac32}}
  J\left(-1;-\frac{1}{z} \right) &=&
  -\frac{2i}{\pi} \int_{-\bar z}^{i \infty} \frac{\eta^2 (\tau)}{\eta(\tau/2)\, (-i (\tau+z )  )^{\frac32 }}\, d \tau
  + \frac{2i}{\pi} \int_{0}^{\infty} \frac{\eta^2 (iu)}{\eta(iu/2)\, (-i (iu+z )  )^{\frac32 }}\, du.
  \end{eqnarray*}
    \end{lemma}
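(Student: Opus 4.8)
The plan is to repeat, essentially word for word, the proof of Lemma~\ref{transformationnonhol}, with the weight-$\tfrac32$ theta function $\Theta_{a,c}$ and Shimura's identity \eqref{Shimura} replaced by the (much simpler) weight-$\tfrac12$ unary theta function $\vartheta(\tau):=\eta^2(\tau)/\eta(2\tau)=\sum_{n\in\Z}(-1)^n e^{2\pi i\tau n^2}$ attached to the shadow of $\mathcal{O}(-1;q)$, together with the $\eta$-transformation law \eqref{etatrans}. Recall that the non-holomorphic part of $\mathcal{M}(-1;z)$ is the Eichler-type integral
\[
J(-1;z)=\frac{\sqrt2}{\pi i}\int_{-\bar z}^{i\infty}\frac{\vartheta(\tau)}{\bigl(-i(\tau+z)\bigr)^{3/2}}\,d\tau .
\]

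The first identity is immediate. Since $\vartheta(\tau)=\sum_n(-1)^n e^{2\pi i\tau n^2}$ is invariant under $\tau\mapsto\tau+1$, the substitution $\tau\mapsto\tau-1$ in the integral defining $J(-1;z+1)$ moves the base point from $-\overline{z+1}=-\bar z-1$ back to $-\bar z$ and replaces $\tau+z+1$ by $\tau+z$, which gives $J(-1;z+1)=J(-1;z)$, exactly as in the case $c\neq2$.

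For the second identity I would substitute $z\mapsto-1/z$ directly into the definition; since $-\overline{-1/z}=1/\bar z$ this turns $J(-1;-1/z)$ into $\tfrac{\sqrt2}{\pi i}\int_{1/\bar z}^{i\infty}\vartheta(\tau)\,\bigl(-i(\tau-1/z)\bigr)^{-3/2}\,d\tau$. Now I make the change of variables $\tau\mapsto-1/\tau$: the contour $1/\bar z\to i\infty$ becomes $-\bar z\to0$, one has $d\tau\mapsto\tau^{-2}\,d\tau$, and by \eqref{etatrans} (using $\eta(-1/\tau)=\sqrt{-i\tau}\,\eta(\tau)$ and $\eta(-2/\tau)=\tfrac{(-i\tau)^{1/2}}{\sqrt2}\,\eta(\tau/2)$) one obtains $\vartheta(-1/\tau)=\sqrt2\,(-i\tau)^{1/2}\,\eta^2(\tau)/\eta(\tau/2)$. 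Combining this with the elementary identity $-i(-1/\tau-1/z)=i(\tau+z)/(\tau z)$, the powers of $\tau$ cancel and the branch factors combine, so that the integrand becomes $(-iz)^{3/2}$ times $\eta^2(\tau)/\bigl(\eta(\tau/2)\,(-i(\tau+z))^{3/2}\bigr)$, up to the explicit constant. Finally, splitting $\int_{-\bar z}^{0}=\int_{-\bar z}^{i\infty}-\int_{0}^{i\infty}$ (the integrand being holomorphic on $\H$ and decaying at $i\infty$) and parametrizing the second piece by $\tau=iu$, $u>0$, produces exactly the two integrals in the statement.

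As in Lemma~\ref{transformationnonhol}, the only genuinely delicate point is the consistent choice of branches for $(-i\tau)^{1/2}$, $(-i(\tau+z))^{3/2}$ and for the multiplier in \eqref{etatrans} along the contour and under the involution $\tau\mapsto-1/\tau$, so that all of the half-integral powers coalesce into the single clean factor $(-iz)^{3/2}$ with the correct sign; everything else is a routine computation.
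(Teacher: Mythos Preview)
Your proposal is correct and follows exactly the approach the paper indicates: the paper's own proof consists solely of the remark that the lemma ``can be shown as in the proof of Lemma~\ref{transformationnonhol},'' and what you have written is precisely that argument carried out with $\Theta_{a,c}$ replaced by $\vartheta(\tau)=\eta^2(\tau)/\eta(2\tau)$ and Shimura's identity replaced by the $\eta$-transformation law. Your computation of $\vartheta(-1/\tau)=\sqrt{2}\,(-i\tau)^{1/2}\eta^2(\tau)/\eta(\tau/2)$ and the subsequent splitting $\int_{-\bar z}^{0}=\int_{-\bar z}^{i\infty}-\int_{0}^{i\infty}$ are exactly the expected steps, and your caveat about tracking the half-integral branches is the only point requiring care.
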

    Using that  $ \frac{\eta^2(\tau)}{\eta(2\tau)}$ is a modular form on $\Gamma_0(16)$ gives  the claim.
\section{Congruences for overpartitions} \label{SectionCong1}
Following the original strategy of Ono \cite{On1} and Ono and the
first author \cite{B2,BO1,BO2}  we can prove the congruences in
Theorem \ref{congruences}.  We limit ourselves to a sketch of the
proof.
\begin{proof}[Sketch of Proof of Theorem \ref{congruences}]
Throughout  we assume the assumptions of Theorem
\ref{congruences}. First one can show, using the results above
that the function
\begin{eqnarray}  \label{partmaass}
\sum_{n=0}^{\infty}\left(\overline{N}(r,t;n)
-\frac{\overline{p}(n)}{t}\right)q^{ n}
\end{eqnarray}
is the holomorphic part of a weak Maass form of weight
$\frac{1}{2}$  on $\Gamma_1\left(16 c^2\right)$.
  In order to use results of Serre on $p$-adic modular forms, we
  next apply twists to the associated weak Maass forms, which ``kill''
  the non-holomorphic part. This requires knowing  on which arithmetic
  progressions it is supported. We prove
\begin{equation} \label{Fourier}
\begin{split}
&\mathcal{M}\left(\frac{a}{c};z\right)=
 1
+\sum_{n=1}^{\infty}\sum_{m=-\infty}^{\infty}
\overline{N}(m,n)\zeta_c^{am}q^{n}\\
&  +  \sqrt{\pi} i  \tan \left( \frac{\pi a}{c} \right)
\sum_{k\pmod{2c}}(-1)^k
e \left(  \frac{2 ka}{c}\right)
\sum_{ m \equiv k \pmod{2c }}
\Gamma \left(
\frac{1}{2}; 4 \pi m^2y
\right)
q^{-  m^2},
\\
\end{split}
\end{equation}
where     $e(x):= e^{ 2 \pi i x}$, and where
\begin{equation}
\Gamma(a;x):=\int_{x}^{\infty}e^{-t}t^{a-1}\ dt
\end{equation}
is the incomplete Gamma-function.
Using this one can show   that for a prime $p\nmid 6t$ the function
\begin{equation} \label{twist}
\sum_{\substack{n\geq 1\\ \leg{n}{p}=-\leg{-1}{
p }}}
\left( \overline{N}(r,t;n)-\frac{\overline{p}(n)}{t}\right)q^{n}
\end{equation}
is a weight $\frac{1}{2}$ weakly holomorphic modular form on
$\Gamma_1(16 c^2 p^4 )$.
 Now the theorem can be concluded as in \cite{BO2} using a geralization of Serre's results on $p$-adic modular forms (Theorem 4.2 of \cite{BO2}).
\end{proof}
\section{Proof of Theorem \ref{congruences2}}  \label{SectionCong2}
Here we prove Theorem \ref{congruences2}. While we follow the
model of Ono \cite{On1} and the first author and Ono
\cite{B2,BO2}, the proof of the modularity is rather delicate. For
brevity we set $t:=\ell^m$.  Define the function
\begin{eqnarray*}
g_r(z) := t \cdot \eta^{r_1}(2 \ell z) \cdot \eta^{2 \ell} (\ell z)
\,
\sum_{n=0}^{\infty}  \overline{N}(r,t;n) q^{ n},
\end{eqnarray*}
where $0 < r_1 < 48$ is a solution of $r_1 \ell \equiv -1 \pmod{48}$. We have  \begin{equation} \label{seperate}
g_r(z) = \frac{\eta(2z) \cdot \eta^{r_1}(2 \ell z) \cdot \eta^{2 \ell}(\ell z)}{\eta^2(z)}
+ \sum_{j=1}^{t-1} \zeta_t^{-r j}
\left(\mathcal{M} \left(\frac{j}{t};z \right) + J\left(\frac{j}{t} ;z\right)\right) \eta^{r_1}(2 \ell z) \cdot \eta^{2 \ell}(\ell z).
\end{equation}
We denote the two summands by $f(z)$ and $f_r(z)$, respectively.
Define for a function $h(z)= \sum_{n} a(n) q^n$  the twist
\begin{eqnarray} \label{twist2}
\widetilde h (z) :=   -  \frac{1}{2} \leg{-1}{\ell}
 \left( h(z) - \leg{-1}{\ell}
 h(z)_{\ell}\right)_{\ell},
\end{eqnarray}
where as before
\begin{equation}\label{definetwist2}
h_{\ell}:= \frac{g}{\ell} \sum_{\nu \pmod \ell}
  \left(\tfrac{\nu}{\ell}\right)    h
  \ | \left(\smallmatrix
    1&-\frac{\nu}{\ell}\cr0&1\cr\endsmallmatrix\right).
\end{equation}
Clearly
\begin{eqnarray} \label{ftwist}
\widetilde h(z)
=
\sum_{\substack{n\\  \leg{-n}{\ell} =  -1  }} a(n) \, q^n.
\end{eqnarray}
The main step in the proof of Theorem \ref{congruences2} is the following theorem.
\begin {theorem}\label{main2}
For every  $u \geq 0$ there exist  a character $\chi$, integers $\lambda, \lambda', N,N' $, and  modular forms  $h(z) \in S_{\lambda+ \frac{1}{2}} \left( \Gamma_0\left(N\right),\chi\right)$ and
$
h_r(z) \in S_{\lambda'+\frac{1}{2}} \left(\Gamma_1\left(N' \right) \right)
$  such that
\begin{eqnarray*}
\frac{\widetilde{g_r}(z)}{\eta^{r_1}(2 \ell z) \cdot \eta^{2\ell}(\ell z)}
 \equiv h(z) + h_r(z) \pmod{\ell^u}.
\end{eqnarray*}
\end{theorem}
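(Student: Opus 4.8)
The plan is to reduce the claimed congruence to a statement purely about \emph{holomorphic} modular forms by using the explicit Fourier expansion of the non-holomorphic part and the twisting operation $h \mapsto \widetilde h$ to annihilate it. First I would write, as in \eqref{seperate}, $\widetilde{g_r}(z) = \widetilde{f}(z) + \widetilde{f_r}(z)$, where $f(z) = \eta(2z)\eta^{r_1}(2\ell z)\eta^{2\ell}(\ell z)/\eta^2(z)$ is (up to the eta prefactor) the overpartition generating function and $f_r(z)$ packages the terms $\mathcal{M}(j/t;z) + J(j/t;z) = \mathcal{O}(j/t;q)$ for $1 \le j \le t-1$, again times $\eta^{r_1}(2\ell z)\eta^{2\ell}(\ell z)$. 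The point of adding back $J(j/t;z)$ to $\mathcal{M}(j/t;z)$ is that the sum is exactly the holomorphic $q$-series $\mathcal{O}(j/t;q)$; but $\mathcal{O}(j/t;q)$ on its own is not modular --- only $\mathcal{M}(j/t;z)$ is a weak Maass form. So I cannot yet treat $f_r$ as modular. Instead I would use the Fourier expansion \eqref{Fourier} of $\mathcal{M}(a/c;z)$, which shows that the non-holomorphic part of $\mathcal{M}(j/t;z)$ is supported on exponents $-m^2$, i.e.\ on $n$ with $\leg{-n}{\ell}$ equal to a square residue pattern; crucially, after multiplying by $\eta^{r_1}(2\ell z)\eta^{2\ell}(\ell z)$ (whose exponents lie in a fixed residue class mod $\ell$ depending on $r_1$ and $2\ell$) and then applying the twist $\widetilde{(\cdot)}$, the non-holomorphic contribution is killed because $-m^2$ is never $\equiv$ a non-residue (this is exactly the mechanism sketched after \eqref{twist} in Section~\ref{SectionCong1}). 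Hence $\widetilde{g_r}(z)$ is, up to the eta factors, the holomorphic part of a weak Maass form whose non-holomorphic part vanishes identically, i.e.\ it is genuinely a weakly holomorphic modular form.

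Next I would record the weight and level. By Theorem~\ref{maassform}(1), each $\mathcal{M}(j/t;z)$ is a weak Maass form of weight $\tfrac12$ on $\Gamma_1(16t^2)$ (with $t = \ell^m$ odd, so $c = t$, $16c^2 = 16\ell^{2m}$); multiplying by $\eta^{r_1}(2\ell z)\eta^{2\ell}(\ell z)$, which is a holomorphic form of weight $\tfrac{r_1 + 2\ell}{2}$ on some $\Gamma_0(N_0)$ with $N_0 \mid 2\ell \cdot 16\ell^2$, and then twisting (which multiplies the level by $\ell^2$), one gets that $\widetilde{f_r}(z)$ and $\widetilde{f}(z)$ are weakly holomorphic modular forms of a common half-integral weight $\Lambda + \tfrac12$ on a fixed $\Gamma_1(N')$ (one can also arrange a Nebentypus version on $\Gamma_0(N)$ for the $f$-piece since $f$ itself has a character). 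Because of the $\eta^{2\ell}(\ell z)$ factor and the residue condition defining the twist, these weakly holomorphic forms actually \emph{vanish to high order at every cusp}; this is the standard device (going back to Ono \cite{On1} and used in \cite{BO2,B2}) that lets one trade ``weakly holomorphic'' for ``holomorphic modulo $\ell^u$''. Concretely, multiply $\widetilde{f}(z)$ and $\widetilde{f_r}(z)$ by a suitable power of $\eta^{\ell^{a}}(z)/\eta(\ell^{1+a} z)$-type eta-quotient congruent to $1 \pmod{\ell^u}$ to clear all the poles at cusps without changing the form mod $\ell^u$; the resulting forms $h(z) \in S_{\lambda + \frac12}(\Gamma_0(N),\chi)$ and $h_r(z) \in S_{\lambda' + \frac12}(\Gamma_1(N'))$ are holomorphic cusp forms, and $\widetilde{g_r}(z)/(\eta^{r_1}(2\ell z)\eta^{2\ell}(\ell z)) \equiv h(z) + h_r(z) \pmod{\ell^u}$, which is exactly the assertion of Theorem~\ref{main2} once one identifies $\widetilde{f}$ with the $h$-part and $\widetilde{f_r}$ with the $h_r$-part.

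The main obstacle, and the reason the theorem is stated as ``rather delicate,'' is the bookkeeping at the cusps: one must verify that after twisting and multiplying by the eta-quotient, \emph{all} the poles at the many cusps of $\Gamma_1(16\ell^4)$ are genuinely killed, and that the multiplier systems / Nebentypus characters of $\eta(2z)$, $\eta^2(z)$, $\eta^{r_1}(2\ell z)$, $\eta^{2\ell}(\ell z)$, and the auxiliary eta-quotient all combine consistently into a single character $\chi$ on $\Gamma_0(N)$. The transformation behavior of $\mathcal{M}(j/t;z)$ at cusps other than $\infty$ enters here, and the cusp expansions have to be controlled using the transformation law in Theorem~\ref{Otransfo} together with the fact, stressed in Remark~4 of the introduction, that $\eta(2z)/\eta^2(z)$ is not fixed under the relevant M\"obius transformations --- so one cannot simply quote the analogous computation from \cite{BO2} but must redo it with the correct eta-quotient. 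Once the cusp vanishing is established, the conclusion is formal: a weakly holomorphic half-integral weight form with integral $q$-expansion that vanishes at every cusp is, modulo $\ell^u$, congruent to a holomorphic cusp form of possibly larger weight, by multiplying into it a high power of $E_{\ell - 1}$ or the Hasse-invariant-type eta-quotient, exactly as in Theorem~4.2 of \cite{BO2}.
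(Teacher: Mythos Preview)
Your proposal is correct and follows essentially the same route as the paper: split $\widetilde{g_r}$ into the $\widetilde f$ and $\widetilde{f_r}$ pieces, observe via \eqref{Fourier} that the twist annihilates the non-holomorphic part so that both pieces are weakly holomorphic, then multiply by a high power of the eta-quotient $E_{\ell,a}(z)=\eta^{\ell^a}(z)/\eta(\ell^a z)\equiv 1\pmod{\ell^u}$ to force vanishing at all cusps $\alpha/\gamma$ with $\ell^a\nmid\gamma$, and finally verify by hand that the quotient by $\eta^{r_1}(2\ell z)\eta^{2\ell}(\ell z)$ still vanishes at the remaining cusps. The paper executes that last ``delicate'' step exactly as you anticipate, via the explicit transformation in Corollary~\ref{transcorollary2} together with a commutation relation $\sm{1}{-\nu/\ell}{0}{1}\sm{\alpha}{\beta}{\gamma}{\delta}=\sm{\alpha'}{\beta'}{\gamma'}{\delta'}\sm{1}{-\nu'/\ell}{0}{1}$ and a Gauss-sum computation of the leading coefficient at each such cusp (splitting on the parity of $\gamma$); this is the concrete content you would need to fill in where you write ``one must verify \dots\ all the poles \dots\ are genuinely killed.''
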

The proof of Theorem \ref{main2} is given later.
We first show how Theorem \ref{congruences2} follows from Theorem \ref{main2}.
\begin{proof}[Proof of Theorem \ref{congruences2}]
We easily see that
\begin{eqnarray*}
\frac{\widetilde{g_r} (z)}{\eta^{2\ell}(\ell z)\cdot \eta^{r_1}(2 \ell z)}
=  t  \sum_{\leg{-n}{\ell} = - 1}
\overline{N}(r,t;n) \, q^{n}.
\end{eqnarray*}
Now we let $0 \leq \beta \leq \ell -1$ with $ \leg{- \beta}{\ell} = -1$ be given.
Define
\begin{eqnarray*}
g_{r,\beta}(z):=
 t \sum_{n\equiv  \beta  \pmod{ \ell}}
\overline{N}\left(r,t;n\right)\,  q^{n}.
\end{eqnarray*}
Theorem \ref{main2} gives that
$$
g_{r,\beta}(z) \equiv h_{\beta}(z) +h_{r,\beta}(z) \pmod{\ell^u},
$$
where  $h_{\beta}(z)$ and $h_{r,\beta}(z)$ denote  the restrictions of the Fourier expansion of $h(z)$ resp. $h_r(z)$  to those coefficients $n$ with $n \equiv  \beta  \pmod { \ell}$.
 Using the theory of Hecke operators, we can show that  for all $n \equiv \beta \pmod {\ell}$ coprime to  $p$ we have
 \begin{eqnarray*}
  t \, \overline{N}\left(r,t; p^3 n \right) \equiv 0 \pmod{\ell^u}.
 \end{eqnarray*}
 Dividing by $t$
 directly gives the theorem since $u$ is arbitrary.
 \end{proof}
 \begin{proof}[Proof of Theorem \ref{main2}]
If $a$ is a positive integer, then  define
\begin{eqnarray*}
E_{\ell,a}(z):= \frac{\eta^{\ell^a}(z)}{\eta(\ell^az)} \in M_{\frac{\ell^a-1}{2}}  \left(\Gamma_0(\ell^a),\chi_{\ell,a} \right),
\end{eqnarray*}
where $\chi_{\ell,a}(d):= \leg{(-1)^{(\ell^a-1)/2}\ell^a}{d}$. It is well known that $E_{\ell,a}(z)$ vanishes at those cusps of $\Gamma_0(\ell^a)$ that are not equivalent to $\infty$ and that  for all $u>0$
\begin{eqnarray} \label{eisencong}
E_{\ell,a}^{\ell^{u-1}} (z) \equiv 1 \pmod {\ell^u}.
\end{eqnarray}
We now treat the summands in (\ref{seperate})  separately.
We start with $f(z)$.  Using Theorem 1.64 from \cite{On3}, it is not hard to see that
$f(z)$ is a modular form of weight $\frac{r_1+ 2 \ell -1}{2}$  with some character  on $\Gamma_0(2 \ell)$.
>From this we see    that
$\widetilde{f}(z) \in  M_{\frac{r_1+2\ell - 1}{2}}  \left( \Gamma_0(2\ell^5), \widetilde \chi\right)$ for some character $\widetilde \chi$.
For sufficiently large $u'$ the function
$$
h(z):= \frac{\widetilde{f}(z) \cdot  E_{\ell, 5}^{\ell^{u'}}(z)}{\eta^{r_1}(2 \ell z) \cdot \eta^{2 \ell}(\ell z)}
$$
is a weakly holomorphic modular form on $\Gamma_0(8 \ell^5)$ that vanishes at all cusps with the exception of $\infty$ and $\frac{1}{r \ell^5}$ with $r \in \{1,2,4\}$,
 and satisfies
$$
h(z) \equiv \frac{\widetilde{f}(z)}{\eta^{r_1}(2 \ell z) \eta^{2 \ell}(\ell z)} \pmod{\ell^u}.
$$
We show that $h(z)$ is a cusp form.
It is easy to see that  in the cusp $\infty$ the Fourier expansion of $\widetilde f(z)$ starts at least with $q^{\frac{\ell}{12}(r_1+\ell) +1}$.
Since the Fourier expansion of   $\eta^{r_1}(2 \ell z) \cdot \eta^{2 \ell}(\ell z)$ starts with
$q^{\frac{\ell}{12}(r_1+\ell) }$, we see that $h(z)$ vanishes in $\infty$.
Since $\widetilde{f}(z)$ is a modular form on $\Gamma_0\left(2 \ell^5 \right)$ and
 the Fourier expansion of $\eta^{r_1}(2 \ell z) \cdot \eta^{2 \ell}(\ell z)$ in $\frac{1}{r \ell^5}$ with $r$ even starts with $q^{\frac{\ell}{12}(r_1+\ell)}$,  $h(z)$ vanishes also in the cusps $\frac{1}{2\ell^5}$ and $\frac{1}{4 \ell^5}$.

Next consider the cusp $\frac{1}{\ell^5}$.
In (\ref{definetwist2}), we can choose a set of respresentatives with $\nu$ even.
Now for even $\nu$ it is not hard to see that
$\left(\begin{smallmatrix} 1& -\frac{\nu}{\ell}\\
0&1 \end{smallmatrix} \right)     \left(\begin{smallmatrix} 1& 0\\
\ell^5&1 \end{smallmatrix} \right)$ is $\Gamma_0(2 \ell^5)$- equivalent to
$\left(\begin{smallmatrix} 1& 0\\
\ell^5&1 \end{smallmatrix} \right)
\left(\begin{smallmatrix} 1& -\frac{\nu}{\ell}\\
0&1 \end{smallmatrix} \right)   $.
Thus
\begin{eqnarray*}
f (z)_{\ell} = \frac{g}{\ell}
\sum_{\substack{\nu\pmod \ell\\ \nu \text{ even}}}
\leg{ \nu}{\ell}
f | \left( \begin{smallmatrix} 1&0\\ \ell^5&1 \end{smallmatrix}\right)
\left( \begin{smallmatrix} 1&-\frac{\nu}{\ell}\\ 0&1 \end{smallmatrix}\right).
\end{eqnarray*}
It is not hard to see that  $f(z)$ vanishes in $\frac{1}{\ell^5}$ of order $\frac{1}{24}(-3+r_1 \ell + 4 \ell^2)$.
Since the cusp width of $\frac{1}{\ell^5}$ in $\Gamma_0(2 \ell^5)$
is $2$, the Fourier expansion of
$f | \left( \begin{smallmatrix} 1&0\\ \ell^5&1 \end{smallmatrix}\right)$ starts with $q^{r_0}$, where
$r_0:= \frac{1}{48}(-3+r_1 \ell + 4 \ell^2)$.
Thus the Fourier expansion of $\leg{-1}{\ell} f_{\ell}$ starts with
\begin{eqnarray*}
\leg{-1}{\ell} \frac{g}{\ell} \sum_{\substack{\nu \pmod \ell \\ \nu \text{ even} }}
\leg{\nu}{\ell}  e^{-\frac{2 \pi i r_0 \nu}{\ell}}
= \leg{-r_0}{\ell} =1.
\end{eqnarray*}
Since twisting doesn't decrease the order of vanishing, $\widetilde f(z)$
has in $\frac{1}{\ell^5}$ a Fourier expansion starting at least with $q^{r_0+ \frac{1}{2}}$, whereas $\eta^{r_1}(2 \ell z) \cdot \eta^{2 \ell} ( \ell z)$ has in $\frac{1}{\ell^5}$ a Fourier expansion starting with $q^{\frac{\ell(r_1+4 \ell)}{48}}$.
Thus $h(z)$ vanishes in all cusps and is therefore a cusp form.

We next turn to $f_r(z)$.
Using Theorem \ref{maassform}, it is not hard to see that $\widetilde{f_r}(z)$ is the holomorphic part of a weak Maass form on $\Gamma_1(16 t^2 \ell^4)$. Moreover by (\ref{Fourier})
it is easy to see that the corresponding weak Maass form doesn't have a  non-holomorphic part, and thus
$\widetilde{f_r}(z)$ is a weakly holomorphic modular form.
Since $E_{\ell,3m}(z)$ vanishes at each cusp $\frac{\alpha}{\gamma}$ with $t^3 \nmid \gamma$  for  sufficiently large $u'$,
the function
$$
f_r'(z):= E_{\ell,3m}^{\ell^{u'}} (z) \widetilde{f_r}(z)
$$
is a weakly holomorphic modular form on $\Gamma_1\left(16t^2\ell^4\right)$ that vanishes at all cusps $\frac{\alpha}{\gamma}$ with $t^3 \nmid \gamma$ and satisfies
$$
f_r'(z) \equiv f_r(z) \pmod{\ell^u}.
$$
 Therefore to finish the proof it remains to  show that
$
\frac{\widetilde{f_r}(z)}{\eta^{r_1}(2 \ell z) \eta^{2 \ell}(\ell z)}
$
vanishes also at those  cusps $\frac{\alpha}{\gamma}$ with $ t^3 | \gamma$. Now let
$
\left(
\begin{smallmatrix}
\alpha& \beta\\
\gamma& \delta
\end{smallmatrix}
\right)
 \in \Gamma_0( t^3).
$
In the following we need the commutation relation
for $ \nu' \equiv \delta^2 \nu \pmod \ell$
\begin{eqnarray} \label{commute}
\left(
\begin{matrix}
1 & -\frac{\nu}{\ell}\\
0&1
\end{matrix}
 \right)
 \left(
\begin{matrix}
\alpha& \beta\\
\gamma& \delta
\end{matrix}
\right)
=
  \left(
\begin{matrix}
\alpha'& \beta'\\
\gamma'& \delta'
\end{matrix}
\right)
\left(
\begin{matrix}
1 & -\frac{\nu'}{\ell}\\
0&1
\end{matrix}
 \right)
\end{eqnarray}
with
\begin{eqnarray*}
  \left(
\begin{matrix}
\alpha'& \beta'\\
\gamma'& \delta'
\end{matrix}
\right)=
  \left(
\begin{matrix}
\alpha - \frac{\gamma \nu}{\ell}& \beta - \frac{\gamma \nu \nu'}{\ell^2}+ \frac{\alpha \nu' - \delta \nu}{\ell}\\
\gamma& \delta + \frac{\gamma \nu'}{\ell}
\end{matrix}
\right) \in \Gamma_0(t^3).
\end{eqnarray*}
We distinguish the cases whether $2 | \gamma$ or not.

  If $2|\gamma$, then one can easily see that the Fourier expansion of $\eta^{r_1}(2 \ell z) \eta^{2 \ell}(\ell z)$ in $\frac{\alpha}{\gamma}$ starts with $\frac{\ell}{12}(r_1+ \ell) = :n_0$.  Clearly $\ell|n_0$.
 Thus we have to prove  that   the $q$-expansion of $\widetilde{f_r}(z)$ in $\frac{\alpha}{\gamma}$
  starts with $q^b$ with $b > n_0$.
We may assume that $48|\nu,\nu'$. Then we can write
     \begin{eqnarray*}
  (f_r)_{\ell}|
     \left( \begin{smallmatrix} \alpha&\beta\\\gamma& \delta \end{smallmatrix}  \right)
     = \sum_{ \substack{\nu \pmod {\ell}  \\ 48|\nu, \nu'   }   }
      f_r |
      \left( \begin{smallmatrix} \alpha'&\beta'\\\gamma'& \delta' \end{smallmatrix}  \right)
      \left( \begin{smallmatrix} 1&-\frac{\nu'}{\ell}\\ 0& 1 \end{smallmatrix}  \right).
     \end{eqnarray*}
     Let
      \begin{eqnarray*}
     f_{r,j}(z) := \mathcal{M} \left( \frac{j}{t};z \right) \cdot \eta^{r_1}(2 \ell z) \cdot \eta^{2 \ell} (\ell z)
     \end{eqnarray*}
     and define  $\widetilde{f_{r,j}}(z)$  for weak Maass forms as for holomorphic forms. It is enough to show that     $\widetilde{f_{r,j}}(z)$  starts with
     $q^b$ with $b >n_0$.
            As before it follows that $\widetilde{f_{r,j}}(z)$  doesn't have a non-holomorphic part.  We use twice (\ref{commute}),  Corollary   \ref{transcorollary2}, and the transformation law of the $\eta$-function.  We let $\widetilde \alpha:= (\alpha')'$,  $\widetilde \delta:= (\delta')'$,  and
     $\widetilde \gamma:= (\gamma')'$. The   holomorphic part of    $ f_{r,j}  |
      \left( \begin{smallmatrix} \widetilde \alpha&\widetilde\beta\\
      \widetilde\gamma& \widetilde\delta \end{smallmatrix}  \right) $
      is given by
            \begin{multline} \label{transfunction}
            - i  e^{  \frac{\pi i \ell}{6 \widetilde\gamma} (\widetilde\alpha  + \widetilde\delta)(r_1+\ell)  }
      e^{\frac{2 \pi i j^2 \widetilde\delta \widetilde\gamma_1}{t}}
      \frac{\omega_{\widetilde\alpha,
      \widetilde\gamma}^2}{\omega_{\widetilde\alpha, \widetilde\gamma/2} \cdot \omega_{\widetilde\alpha, \widetilde\gamma/\ell}^{2 \ell}  \cdot    \omega_{\widetilde\alpha,\widetilde\gamma/2 \ell}^{r_1}}
      \tan \left( \frac{\pi j}{t}\right) \cot \left( \frac{\pi j \widetilde\delta}{t} \right)\\
      \left(-i \left(\widetilde \gamma z + \widetilde \delta \right)  \right)^{\frac{1}{2} (1+r_1 + 2 \ell)}
      f_{r,j \widetilde\delta}(z).
      \end{multline}
      Using  \cite{Ne}
      \begin{eqnarray} \label{newman}
      \omega_{\alpha,\gamma}^{-1}
       \cdot e^{\frac{\pi i}{12 \gamma}(\alpha + \delta)} =
      \left\{
        \begin{array}{ll}
      \leg{\delta}{\gamma}\cdot  i^{\frac{1-\gamma}{2}} \cdot  e^{\frac{\pi i }{12}  (\beta \delta(1- \gamma^2) + \gamma(\alpha+ \delta) ) } & \text{if } \gamma \text{ is odd},\\
      \leg{\gamma}{\delta} e^{\frac{\pi i}{12} ( \alpha \gamma(1- \delta^2) + \delta(\beta-\gamma+3))}
       & \text{if } \delta \text{ is odd},
      \end{array}
      \right.
         \end{eqnarray}
        we can show that in (\ref{transfunction})  we can change  $\widetilde \alpha$, $\widetilde \delta$, and $\widetilde \gamma$ into $\alpha'$, $\delta'$, and $\gamma'$, respectively
         if we change $z$ into $z+ \frac{\nu'}{\ell}$.
  The Fourier expansion of $f_{r,j \delta'}$ starts with $q^{n_0}$ and  $\ell | n_0$. Moreover      \begin{eqnarray*}
      \sum_{ \substack{  \nu \pmod \ell \\ \nu \equiv 0 \pmod {48}   }  }
      \leg{ \nu}{\ell}
      e^{-\frac{2 \pi i n_0 \nu'}{\ell}}
      =  \sum_{\nu \pmod \ell} \leg{\nu}{\ell} =0.
      \end{eqnarray*}
      Thus the
     Fourier expansion of  the holomorphic part of
     $
     (f_{r,j})_{\ell}| \left(\begin{smallmatrix} \alpha'& \beta'\\ \gamma'& \delta' \end{smallmatrix} \right)
     $
     starts with at least $q^{n_0+1}$ which implies that   the Fourier expansion of
     the holomorphic part of
     $$
     \left(f_{r,j} - \leg{-1}{\ell}   (f_{r,j})_{\ell} \right)|
     \left(\begin{smallmatrix} \alpha'& \beta'\\ \gamma'& \delta' \end{smallmatrix} \right)
     $$ starts at least with $q^{n_0}$.
     Arguing in the same way, we obtain that the Fourier expansion of
     $$
     \left(
     f_{r,j} - \leg{-1}{\ell}  (f_{r,j})_{\ell}
     \right)_{\ell} |
     \left(\begin{smallmatrix}\alpha&\beta\\ \gamma& \delta  \end{smallmatrix} \right)
     $$
     starts with at least $q^{n_0+1}$ as desired.

      Next assume that $\gamma$ is odd.
     It is not hard to see that the Fourier expansion of \\
     $\eta^{r_1}(2 \ell z) \eta^{2 \ell}(\ell z)$ in $\frac{\alpha}{\gamma}$
     starts with $q^{\frac{\ell}{48} (r_1+4\ell)}=:q^{n_0}$.
      Since twisting does not decrease the order of vanishing, it is enough to show that the Fourier expansion of the holomorphic part of
      $
     \left(f_{r,j} - \leg{-1}{\ell}   (f_{r,j})_{\ell} \right)| \left(\begin{smallmatrix} \alpha& \beta \\ \gamma& \delta \end{smallmatrix} \right)
     $ starts with $q^{b}$  with $b>n_0$.
     This time we use   (\ref{commute}) once. One  can compute  that the holomorphic part of
     $ f_{r,j}|\left(\begin{smallmatrix} \alpha'& \beta'\\ \gamma'& \delta' \end{smallmatrix} \right)$ equals  \begin{multline} \label{holpart}
    \sqrt{2} i
     e^{ \frac{\pi i \ell}{6 \gamma'} \alpha' (r_1+\ell)  +
     \frac{\pi i \delta' \ell}{24 \gamma'}   (r_1+ 4 \ell)
      - \frac{\pi i  \delta'}{8 \gamma'}   + \frac{2 \pi i j^2 \delta' \gamma_1'}{t}
      }
      \cdot \tan \left(\frac{\pi j}{t} \right)
      \frac{\omega_{\alpha',\gamma'}^2 }{\omega_{2 \alpha',\gamma'} \cdot \omega_{2 \alpha', \gamma'/\ell}^{r_1} \cdot \omega_{\alpha',\gamma'/\ell}^{2 \ell}}\\
      (-i(\gamma' \tau+ \delta'))^{\frac{r_1+ 2 \ell +1}{2}}
      \cdot \eta^{2 \ell}(\ell z) \cdot \eta^{r_1} \left( \frac{\ell z}{2}  \right) \cdot
      \mathcal{U} \left( -\frac{j \delta}{t};z\right).
    \end{multline}
      Again using (\ref{newman}), one can show that one can change $\alpha'$, $\delta'$, and $\gamma'$ into $\alpha$, $\delta$, and $\gamma$, respectively   if one  changes $z$ into $z+ \frac{\nu'}{\ell}$.
      The expansion of (\ref{holpart}) starts with $q^{\frac{1}{48}  (\ell r_1 + 4 \ell^2  -3)}$.
                For \\
                $r_0:= \frac{1}{48} (\ell r_1 + 4 \ell^2 -3)$ one  clearly has $(r_0,\ell)=1$. Moreover
        \begin{eqnarray*}
        \frac{g}{\ell}  \leg{-1}{\ell} \sum_{\substack{\nu \pmod \ell\\ \nu,\nu' \equiv 0 \pmod{48}}}   \leg{\nu}{\ell} e^{-\frac{2 \pi i r_0 \nu'}{\ell}}
        = \leg{-r_0}{\ell}=1.
        \end{eqnarray*}
        Thus the expansion  of
        $
     \left(f_{r,j} - \leg{-1}{\ell}   (f_{r,j})_{\ell} \right)| \left(\begin{smallmatrix} \alpha'& \beta'\\ \gamma'& \delta' \end{smallmatrix} \right)
     $
        starts at least with $q^{\frac{1}{48}  (\ell r_1 + 4 \ell^2 +45)}$
         which implies the claim.
         \end{proof}
\section{Proof of Theorem \ref{identities}}  \label{SectionIdentities}
\begin{proof}[Proof of Theorem \ref{identities}]
>From (\ref{partmaass}) it follows  that
 \begin{eqnarray*}
 \sum_{ n = 0}^{\infty } \left(\overline{N} (s_i,\ell;n )   -\frac{ \overline{p}(n)}{ \ell}\right)\, q^n
 \end{eqnarray*}
 for $i  \in \{1,2\}$ is the holomorphic part of a weak Maass form on $\Gamma_1(16 \ell^2)$.
 Moreover   the non-holomorphic  part    is supported on negative squares. The same is true for the function
 \begin{eqnarray*}
 \sum_{ n=0}^{ \infty} \left( \overline{N}(s_1,\ell;n) - \overline{N} (s_2,\ell;n)\right)q^n.
 \end{eqnarray*}
 The restriction of the associated weak Maass form to those coefficients congruent to $d$ modulo $\ell$ gives a weak Maass form on $\Gamma_1(16 \ell^4)$.
 Since $\leg{d}{\ell} = - \leg{-1}{\ell}$ it does not have a non-holomorphic part which proves Theorem \ref{identities}.
 \end{proof}
 Next we state some identities which may be deduced thanks to our   theorem.
\noindent
Define for a positive integer $N$, $g,h$ real numbers that are not simultaneously congruent to $0 \pmod N$,
the generalized Dedekind eta-function
\begin{eqnarray*}
E_{g,h}(z) :=
q^{ \frac{1}{2} B \left(\frac{g}{N}  \right)}
\cdot
\prod_{m=1}^{\infty}
\left(
1-  \zeta_{N}^h \cdot  q^{m-1+ \frac{g}{N}}
\right)
\left(
1-  \zeta_{N}^{-h} \cdot  q^{m- \frac{g}{N}}
\right),
\end{eqnarray*}
where $B(x):=x^2-x+ \frac{1}{6}$. We have the following identities.
\begin{equation} \label{identity1}
\sum_{n=0}^{\infty}  \left(\overline{N}(1,5,5n+2) - \overline{N}(2,5,5n+2) \right)\, q^{5n+2} = 2 \frac{\eta(50z)}{E_{1,0}(25z)},
\end{equation}
\begin{equation}
\sum_{n=0}^{\infty}  \left( \overline{N}(1,5,5n+3) - \overline{N}(2,5,5n+3) \right)\, q^{5n+3} = -2 \frac{\eta(50z)}{E_{2,0}(25z)},
\end{equation}
\begin{equation}
\sum_{n=0}^{\infty}  \left(  \overline{N}(0,5,5n+3) - \overline{N}(2,5,5n+3) \right)\, q^{5n+3} = 2 \frac{\eta(50z)}{E_{2,0}(25z)},
\end{equation}
\begin{equation}
\sum_{n=0}^{\infty}  \left( \overline{N}(0,5,5n+2) - \overline{N}(2,5,5n+2) \right)\, q^{5n+2} =0,
\end{equation}
\begin{equation}
\sum_{n=0}^{\infty}  \left( \overline{N}(0,3,3n+1) -  \overline{N}(1,3,3n+1) \right)\, q^{3n+1} =
2\frac{\eta(9z) \cdot \eta(18 z)}{\eta(3z)}.
\end{equation}

\end{document}